\theoremstyle{plain}
\newtheorem{theorem}{Theorem}
\newtheorem{corollary}{Corollary}
\newtheorem{proposition}{Proposition}
\newtheorem{lemma}{Lemma}
\theoremstyle{definition}
\newtheorem{definition}{Definition}
\theoremstyle{remark}
\newtheorem{remark}{Remark}
\newtheorem{example}{Example}
\newtheorem*{notation}{Notation}
\newcommand{\Lat}{\operatorname{Lat}}
\newcommand{\Alg}{\operatorname{Alg}}
\newcommand{\ti}{\tilde}
\newcommand{\wh}{\widehat}
\newcommand{\ol}[1]{\overline{#1}}
\def\bbC{\mathbb C}
\def\bbD{\mathbb D}
\def\bbN{\mathbb N}
\def\bbT{\mathbb T}
\newcommand{\fA}{\mathfrak A}     \newcommand{\sA}{\mathcal A}
     \newcommand{\sB}{\mathcal B}
     \newcommand{\sF}{\mathcal F}
     \newcommand{\sI}{\mathcal I}
     \newcommand{\sJ}{\mathcal J}
     \newcommand{\sM}{\mathcal M}
\newcommand{\fO}{\mathfrak O}     \newcommand{\sO}{\mathcal O}
     \newcommand{\sS}{\mathcal S}
\newcommand{\al}{\alpha}
\newcommand{\be}{\beta}
\newcommand{\vpi}{\varphi}
\newcommand{\la}{\lambda}
\newcommand{\ep}{\epsilon}
\newcommand{\ga}{\gamma}
\newcommand{\de}{\delta}
\begin{document}

% topmatter
\title[Singly Generated]
{Singly Generated Radical Operator Algebras}

\author{Justin R. Peters}
\address{Department of Mathematics\\
    Iowa State University, Ames, Iowa, USA}

\email{peters@iastate.edu}
% \thanks{thanks}

 \keywords{operator algebra, C$^*$-cover, completely isometric isomorphism, gauge automorphism, wieghted shift operator, Volterra integral operator}

%\thanks{The author would like to thank Chris Phillips for his comments on an earlier version of this paper.}

\renewcommand{\subjclass}{2020 Mathematics Subject Classification }
 \subjclass{ Primary:  47L80; Secondary: 46J45}
% \date{\date}

% abstract
 \begin{abstract}
We examine two nonselfadjoint operator algebras: the weighted shift algebra, and the Volterra operator algebra. In both cases,
the operator algebra is the norm closure of the polynomials in the operator norm. In the case of the weighted shift algebra, the existence of a gauge action
allows us to apply Fourier analysis to study the ideals of the algebra. In the case of the Volterra operator algebra, there is no gauge action, and 
other methods are needed to study the norm structure and the ideals.
 \end{abstract}
\maketitle

Here we consider commutative operator algebras, which need not be self-adjoint. In the case of semi-simple commutative operator algebras, the Gelfand theory provides
a complete description. At the other extreme, there is at this point no comprehensive theory of commutative radical operator algebras. This paper deals primarily with two types of
commutative radical operator algebras: namely, those generated by weighted shifts, and the one generated by the Volterra integral operator.

Given a bounded linear operator $T$ on a complex Hilbert space $H,$ there are various topologies in which one take the closure of the polynomials in $T$
to form an operator algebra. In this paper, we deal with the operator-norm closure. 
Thus, by the operator algebra $\sA_T$  we mean the operator-norm closure of the linear subspace of $\sB(H)$
generated by $\{T, T^2, T^3, \dots \}.$

If $T$ is a bounded linear operator on the Hilbert space $H,$ then by definition the operator algebra $\sA_T$ is completely isometrically represented on $H,$
and C$^*(T),$ the C$^*$-algebra generated by $T$ in $\sB(H),$ is a C$^*$-cover. The coordinate-free study of $\sA_T$ would include, say, the determination
of the closed ideals of $\sA_T,$ rather than the invariant subspaces arising from the action of $\sA_T$ on the Hilbert space $H,$ or, say, the existence of certain automorphisms,
which is a property of the abstract operator algebra and not a particular representation. 
The coordinate-free study of operator algebras was stimulated by \cite{BRS1990}, which gave internal `matrix-norm' conditions for a
Banach algebra to be an operator algebra.
Our approach here is necessarily a hybrid, as most properties of the operator algebra
can only be deduced from the given representation, at least with the tools we have available.

One automorphism that has proved fruitful in the C$^*$-theory is the gauge automorphism. For example, this is useful in proving the simplicity of the Cuntz algebras
$\sO_n$ (e.g., \cite{KDav96}, Theorem V.4.6). However, gauge actions have been employed in nonselfadjoint operator algebras as well (\cite{HPP2005}). For an operator
$T \in \sB(H),$ we say that the operator algebra $\sA_T$ admits a gauge action if the map $T \mapsto zT \ (|z| = 1)$ extends to an isometric isomorphism of $\sA_T.$
(See Definition~\ref{d:gauge action}.)
The existence of a gauge action on $\sA_T$ allows for the application of Fourier analysis on the elements $S \in \sA_T,$ which in turn has application to the ideal structure of the algebra.
For some operators $T \in \sB(H)$ the associated operator algebra $\sA_T$ will admit a gauge action, while others will not. 
We show that if $T$ is a weighted shift operator, then $\sA_T$ admits
a gauge action. However if $V$ is the Volterra integral operator, then $\sA_V$ fails to admit a gauge action. This distinction implies that $\sA_T$ and $\sA_V$
are not isomorphic as operator algebras. (Corollary~\ref{c: not isomorphic})

Section 2 provides some background results regarding gauge actions and applications of gauge automorphisms to singly generated operator algebras $\sA_T,$ and
some basic examples of operators $T$ for which the associated algebra $\sA_T$ either does, or does not, admit a gauge action.

In Section 3 we consider operator algebras generated by weighted shift operators $T.$
 In addition to the operator norm on $\sA_T,$ there is a norm arising from a
cyclic and separating unit vector for $\sA_T.$ But in general, the Hilbert space norm and the operator norm are inequivalent. (Remark~\ref{r: inequivalent norms}) But if the weight
sequence is square summable, then the two norms are equivalent. (Proposition~\ref{p: equivalent norms}) Proposition~\ref{p: unit ball} gives a sufficient condition for an
element $S$ in the unit ball of $\sA_T$ to be an extreme point. In particular, the normalized powers of $T, \ T_n = \frac{1}{||T^n||}T^n$ are extreme points
of the unit ball of $\sA_T.$ Under the same conditions on the weights, there is an isomorphism of the lattice of closed ideals of $\sA_T$ and closed invariant subspaces.
(Proposition~\ref{p: ideals and subspaces}) We give two results describing which elements $S \in \sA_T$ generate gauge-invariant ideals. We conclude this section
showing that the operator algebra $\sA_T$ is a (nonunital) integral domain.

The final section of the paper deals with $\sA_V,$ the operator algebra generated by the classical Volterra integral operator on $L^2[0, 1].$ The  closure of
the polynomials in $V$ in the strong operator topology turns out to be the commutant of $V,$ and hence corresponds also to the weak and weak$^*$ closed algebras
generated by $V.$ (\cite{KDav1988} Theorem 5.10)  Another weakly closed algebra associated with $V$ is $\Alg(\Lat V),$ 
the weakly closed algebra of operators in $\sB(L^2[0, 1])$ which leaves the lattice 
of subspaces $\Lat V$ invariant. This algebra, which contains the commutant $\{V\}^c,$ is non-commutative (\cite{KDav1988} Theorem 5.12). The operator norm closed
algebra generated by $V, \ \sA_V$ by contrast is composed of operators which share important properties of $V$: any $S \in \sA_V$ is quasinilpotent and compact.
Furthermore, given $S \in \sA_V$ there is a measurable function $f$ on $[0, 1],$ integrable over compact subsets of $[0, 1),$ such that if $\rho \in L^2[0, 1],$
\[ S\rho(x) = \int_0^x f(x-t) \rho(t) \, dt  \text{ for almost all } x \in [0, 1]. (\text{Theorem }~\ref{t: Vf in L2}) \]
However, $f$ need not be integrable over $[0, 1],$ as shown in Example~\ref{e:notell1}. If $f$ is in $L^1[0, 1],$ then the $L^1$ norm of $f$ dominates the operator norm $||S||.$

While Theorem~\ref{t: Vf in L2} allows us to represent an arbitrary $S \in \sA_V$ as a operator defined by a kernel, it does not provide another tool to calculate or
estimate the norm. Even in the case of polynomials of low degree in $V$ little is known. Remarkably, the recent paper \cite{RW22} appears to 
be the first to have obtained an exact value for $||V^2||,$ expressed as the solution to a transcendental equation (\cite{RW22}, Corollary 3.2 \footnote{$||V^2|| = \eta_0^{-2},$
where $\eta_0$ is the least positive solution $\eta$ to the equation $\cosh(\eta)\cos(\eta) = -1.$ I recall many years ago
hearing that Paul Halmos had obtained an expression for $||V^2||$ as the solution of a transcendental equation, but cannot find a reference for it.}).
 Their computations are limited to polynomials of degree $2$ in $V.$  As a consequence of this lack of computational tools, we are not able to make any assertions
as to the extreme points of the unit ball of $\sA_V,$ as we did for the unit ball of the radical weighted shift algebra.

In \cite{PeWo99} it is shown that the nilpotent elements are dense in $\sA_V.$ We obtain the same result here as a consequence of Theorem~\ref{t: Vf in L2}.
The last result is an extension of Titschmarsh's theorem on zero divisors of $L^1[0, 1]$ to the operator algebra $\sA_V$ (Corollary~\ref{c: Titschmarsh}).

%%%%%%%%%%%%%%%%%%%%%%%%%%%%%%%%%%%%%%%%%%%%%%%%%%%%%%%%%%%%%%%%%%%%%%%%%%%%%%%%%%
%%%%%%%%%%%%%%%%%%%%%%%%%%%%%%%%%%%%%%%%%%%%%%%%%%%%%%%%%%%%%%%%%%%%%%%%%%%%%%%%%%%
%%%%%%%%%%%%%%%%%%%%%%%%%%%%%%%%%%%%%%%%%%%%%%%%%%%%%%%%%%%%%%%%%%%%%%%%%%%%%%%%%%%%%
%%%%%%%%%%%%%%%%%%%%%%%%%%%%%%%%%%%%%%%%%%%%%%%%%%%%%%%%%%%%%%%%%%%%%%%%%%%%%%%%%%%%%%%%
\section{Background, notation and examples}

\subsection{Gauge Actions and Fourier analysis on Singly Generated Algebras}

Let $T$ be a bounded linear operator on a complex Hilbert space $H,$ and $\sA_T$ the operator algebra in $\sB(H)$ which is the operator norm closure of
the polynomials in $T$ which vanish at the origin. Let $\bbT = \{ z \in \bbC: |z| = 1 \}.$

\begin{definition} \label{d:gauge action} Let $Aut(\sA_T)$ denote the group of isometric automorphisms of $\sA_T.$
We say that $\sA_T$ admits a \emph{gauge action} if there exists a continuous homomorphism $\ga: \bbT \to Aut(\sA_T)$ such that
$\ga_{\la}(T) = \la T \quad (\la \in \bbT).$ By a \emph{continuous homomorphism} we mean that for each $\la_0 \in \bbT$ and $S \in \sA_T,$
\[ ||\ga_{\la}(S) - \ga_{\la_0}(S)|| \to 0 \text{ as } \la \to \la_0  \text{ in } \bbT.\]
\end{definition}

\begin{remark} \label{r: isometric auto}
 In the examples of gauge actions that arise here, the gauge automorphisms are completely isometric.

\end{remark}

Assume that $\sA_T$ admits a gauge action. Then if $p$ is any polynomial with $p(0) = 0,\  \ga_{\la}(p(T)) = p(\la T).$  Since, by definition, the algebra $\sA_T$ is the 
norm closure of such polynomials in $T,$ it follows that the action of $\ga_{\la}$ on polynomials in $T$ determines the action of $\ga_{\la}$ on $\sA_T.$

Now if $p$ is a polynomial, $p(z) = \sum_{j=1}^n a_j z^j,$ then 
\[ \hat{p}(k)T^k = \int_{\bbT} \ga_\la(p(T))\, \la^{-k}\, d|\la| = \begin{cases} 
 a_k T^k \text{ if } 1 \leq k \leq n \\
 0 \text{ otherwise}
\end{cases} \]

Thus, for $S \in \sA_T,$
\[ \hat{S}(k)T^k = \int_{\bbT} \ga_{\la}(S)\, \, \la^{-k} \, d|\la| \]
is well-defined. We say that $\hat{S}(k) \in \bbC $ is the $k^{\text{th}}$ Fourier coefficient of $S.$

\begin{lemma} \label{l: unique}
If $\sA_T$ admits a gauge action then $S \in \sA_T$ is uniquely determined by its Fourier series.
\end{lemma}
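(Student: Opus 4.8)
The plan is to show that the Fourier-coefficient map $S \mapsto (\hat{S}(k))_{k\ge 1}$ is injective, i.e., if $\hat{S}(k) = 0$ for all $k \geq 1$ then $S = 0$. The natural route is a Fej\'er-type (Ces\`aro) summation argument: one forms averaged integrals of $\ga_{\la}(S)$ against a Fej\'er kernel $K_N(\la)$ on $\bbT$, shows these averages are norm-convergent to $S$ using the continuity hypothesis in Definition~\ref{d:gauge action}, and observes that each average is a finite linear combination of the operators $\hat{S}(k)T^k$, hence vanishes if all Fourier coefficients vanish.

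First I would define, for each $N \geq 1$, the Ces\`aro mean
\[ \si_N(S) = \int_{\bbT} \ga_{\la}(S)\, K_N(\la)\, d|\la|, \qquad K_N(\la) = \sum_{|k| \leq N} \Bigl(1 - \tfrac{|k|}{N+1}\Bigr)\la^{-k}, \]
where the integral is the Bochner integral of the continuous $\sA_T$-valued function $\la \mapsto \ga_{\la}(S)$ against the (bounded, continuous, real-valued) function $K_N$; continuity of $\la \mapsto \ga_{\la}(S)$ is exactly the continuous-homomorphism condition, and $\bbT$ is compact, so the integral is well-defined. Expanding $K_N$ and using linearity of the Bochner integral,
\[ \si_N(S) = \sum_{k=1}^{N} \Bigl(1 - \tfrac{k}{N+1}\Bigr)\,\hat{S}(k)\,T^k, \]
since $\int_{\bbT}\ga_{\la}(S)\,\la^{-k}\,d|\la| = \hat{S}(k)T^k$ by definition and this vanishes for $k \leq 0$ (for $k=0$ because $\sA_T$ consists of polynomials vanishing at the origin, so $\hat{S}(0)T^0$ would be a scalar times the identity, which is not in $\sA_T$ unless it is $0$; more carefully, for $k \leq 0$ the integral of $\la \mapsto \ga_\la(S)\la^{-k}$ against $d|\la|$ is $0$ because it already is for polynomials and both sides are norm-continuous in $S$). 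In particular $\si_N(S)$ is a polynomial in $T$ with no constant term, and if every $\hat{S}(k) = 0$ then $\si_N(S) = 0$ for all $N$.

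Next I would prove $\|\si_N(S) - S\| \to 0$ as $N \to \infty$. Since $\ga_{\bf 1} = \mathrm{id}$, we have $S = \int_{\bbT}\ga_{\la}(S)\,d|\la|$ is not quite right; instead write $S = \ga_{\bf 1}(S)$ and use $\int_{\bbT}K_N(\la)\,d|\la| = 1$ to get
\[ \si_N(S) - S = \int_{\bbT}\bigl(\ga_{\la}(S) - \ga_{\bf 1}(S)\bigr)\,K_N(\la)\,d|\la|. \]
Standard properties of the Fej\'er kernel — nonnegativity, mass $1$, and concentration of mass near $\la = 1$ — combined with the norm-continuity of $\la \mapsto \ga_{\la}(S)$ at $\la = 1$ and the uniform bound $\|\ga_{\la}(S)\| = \|S\|$ (the $\ga_\la$ are isometric), give the usual approximate-identity estimate: split the integral into $|\arg \la| < \de$ and $|\arg\la| \geq \de$, make the first piece small by continuity and the second small because $\int_{|\arg\la|\geq\de}K_N \to 0$. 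Hence $S = \lim_N \si_N(S)$; if all Fourier coefficients of $S$ vanish this forces $S = 0$.

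The main obstacle, if any, is bookkeeping rather than conceptual: one must be slightly careful that $\la \mapsto \ga_{\la}(S)$ is genuinely Bochner-integrable (it is continuous on the compact group $\bbT$ with values in the Banach space $\sA_T$, so this is immediate) and that the scalar Fej\'er-kernel facts transfer verbatim to the Banach-space-valued setting (they do, since $K_N \geq 0$ and everything is dominated by $\|S\|\int K_N = \|S\|$). A secondary point worth stating cleanly is why $\hat{S}(k)T^k$, as a product of a scalar and $T^k$, determines $\hat{S}(k)$ unambiguously: this needs $T^k \neq 0$ for all $k \geq 1$, which holds in all the algebras $\sA_T$ under consideration (for instance it follows from quasinilpotence/injectivity considerations, or simply from $T \neq 0$ together with $T$ having no nonzero algebraic relations in these examples); alternatively one phrases the lemma as ``$S$ is determined by the sequence $(\hat S(k)T^k)_k$'' and the injectivity of the coefficient map is then the clean statement. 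I would remark on this but not belabor it.
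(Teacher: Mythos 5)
Your proof is correct, but it takes a genuinely different route from the paper's. The paper argues by duality: assuming $S\neq 0$ with all $\hat S(k)=0$, it picks a bounded linear functional $\vpi$ with $\vpi(S)\neq 0$, forms the continuous scalar function $f(\la)=\vpi(\ga_{\la}(S))$ on $\bbT$, computes that all classical Fourier coefficients of $f$ vanish (pulling $\vpi$ through the integral), and invokes the uniqueness theorem for Fourier series of continuous functions to get $f\equiv 0$, a contradiction. You instead run the Fej\'er argument directly at the level of the $\sA_T$-valued function $\la\mapsto\ga_{\la}(S)$: the Bochner integrals $\si_N(S)=\int_{\bbT}\ga_{\la}(S)K_N(\la)\,d|\la|$ are finite linear combinations of the $\hat S(k)T^k$, and the standard approximate-identity estimate (nonnegativity and mass concentration of $K_N$, norm continuity of $\la\mapsto\ga_\la(S)$ at $\la=1$, and the isometric bound $\|\ga_\la(S)\|=\|S\|$) gives $\|\si_N(S)-S\|\to 0$, so vanishing coefficients force $S=0$. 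Both proofs are valid and rest on the same two ingredients (continuity of the action and the computation of $\int\ga_\la(S)\la^{-k}d|\la|$), but your version actually proves more: it yields \emph{norm} convergence of the Ces\`aro means of the Fourier series to $S$, which is stronger than what the paper extracts in Proposition~\ref{p: reconstruction} (weak convergence of the Fej\'er sums, upgraded to norm convergence only for a sequence of convex combinations via Hahn--Banach). Your closing remark about $T^k$ possibly being zero is a fair point the paper glosses over; as you note, the clean formulation is that $S$ is determined by the operators $\hat S(k)T^k$ rather than by the scalars $\hat S(k)$, and your argument only ever uses the former.
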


\begin{proof}
It is enough to prove that if $S \in \sA_T$ is nonzero, then $\{\hat{S}(k)\}$ is not the zero sequence.

Suppose $S \neq 0,$ and that $\hat{S}(k) = 0$ for all $k.$  There is a continuous linear functional $\vpi$ on $\sA_T$ for which $\vpi(S) \neq 0,$ and hence the continuous function
$f(\la) = \vpi(\ga_{\la}(S))$ is nonzero. However,

\begin{align*}
\hat{f}(k) &= \int_{\bbT} \vpi(\ga_{\la}(S))\, \la^{-k} \, d|\la| \\
	&= \vpi(\int_{\bbT} \ga_{\la}(S) \, \la^{-k}\, d|\la|) \\
	&= \vpi(\hat{S}(k)T^k) \\
	&= \hat{S}(k) \vpi(T^k) \\
	&= 0
\end{align*}
This holds for $k = 1, 2 \dots,$ but also for $k \leq 0,$ since the Fourier coefficients $\hat{S}(k) = 0$ for all polynomials $S$ and hence for all $S \in \sA_T.$

This implies $f$ is identically zero, which is a contradiction.
\end{proof}

Just as with classical Fourier series, we associate with $S \in \sA_T$ the formal series 
\begin{equation} \label{eq: Fourier series}
 S \sim \sum_{j=1}^{\infty} \hat{S}(j) T^j
\end{equation}
We would like to construct a sequence of polynomials in $T$ which converges to $S$ in some sense.
To this end, let $\vpi $ be a continuous linear functional on $\sA_T$ and $p(z) = \sum_{j=1}^n a_j z^j$
a polynomial.  Then 
\[  \widehat{\vpi(p(T))}(k) = \int_{\bbT} \vpi(\ga_{\la}(p(T)))\, \la^{-k} \, d|\la| = a_k \vpi(T^k)  \]

Since an arbitrary $S \in \sA_T$ is a norm limit of polynomials, we have that
\[ \widehat{\vpi(S)}(k) := \int_{\bbT} \vpi(\ga_{\la}(S)) \, \la^{-k} \, d|\la| = a_k \vpi(T^k) \]
where $S \sim \sum_{j=1}^{\infty} a_j T^j.$

\begin{proposition} \label{p: reconstruction}
With notation as in the above paragraph, define the function $f: \bbT \to \bbC, \ f(\la) = \vpi(\ga_{\la}(S)).$  Then
\begin{enumerate}
\item[1] $f$ is a continuous function on $\bbT$ with $\hat{f}(n) = 0$ for $n \leq 0.$ 
\item[2] The sequence of functions 
\begin{align*}
s_n(\la) &= \sum_{j=1}^n \frac{n-j}{n} \hat{f}(j)\la^j \\
	&= \sum_{j=1}^n \frac{n-j}{n} a_j \vpi(T^j) \la^j \ (\la \in \bbT)
\end{align*}
converges uniformly in $\la$ to $f.$
\item[3] The sequence $\{ S_n(\la) = \sum_{j=1}^n \frac{n-j}{n} a_j T^j \la^j \}$ converges weakly to $\ga_{\la}(S),$ uniformly in $\la \in \bbT.$
\item[4] There is a sequence $R_n$ in the convex hull of the sequences $\{ S_n(1): n = 1, 2, \dots\}$ which converges in norm to $S.$
\end{enumerate}
\end{proposition}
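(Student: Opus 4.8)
The plan is to handle the four parts in order, using the classical Fej\'er theorem for the trigonometric estimates and Mazur's theorem for the passage from weak to norm convergence at the end. For part~(1), continuity of $f$ is immediate: the hypothesis that $\sA_T$ admits a gauge action says precisely that $\la \mapsto \ga_\la(S)$ is norm-continuous from $\bbT$ into $\sA_T$, and $\vpi$ is continuous, so $f = \vpi \circ (\la \mapsto \ga_\la(S))$ is continuous. The coefficients $\hat f(j)$ for $j \ge 1$ were already evaluated, just above the proposition, as $a_j \vpi(T^j)$. For $j \le 0$ I would argue: if $p(z)=\sum_{k=1}^m c_k z^k$ is a polynomial then $\vpi(\ga_\la(p(T))) = \sum_{k=1}^m c_k\vpi(T^k)\la^k$ is a trigonometric polynomial supported on strictly positive frequencies, so $\int_{\bbT}\vpi(\ga_\la(p(T)))\la^{-j}\,d|\la| = 0$ for every $j \le 0$. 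Since polynomials in $T$ vanishing at $0$ are norm-dense in $\sA_T$ and $\sup_{\la\in\bbT}\|\ga_\la(S)-\ga_\la(S')\| = \|S-S'\|$ by isometry of the gauge automorphisms, the functional $S'\mapsto \int_{\bbT}\vpi(\ga_\la(S'))\la^{-j}\,d|\la|$ is norm-continuous, hence $\hat f(j)=0$ for $j \le 0$.

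For part~(2), the point is that $s_n$ is exactly a Ces\`aro (Fej\'er) mean of $f$. Writing $\sigma_N f(\la) = \sum_{|k|\le N}\bigl(1-\tfrac{|k|}{N+1}\bigr)\hat f(k)\la^k$ for the $N$th Fej\'er mean and using part~(1) to discard the frequencies $k \le 0$, one gets $\sigma_{n-1}f(\la) = \sum_{j=1}^{n-1}\tfrac{n-j}{n}\hat f(j)\la^j = s_n(\la)$, the $j=n$ term being zero. As $f$ is continuous on $\bbT$, Fej\'er's theorem gives $s_n \to f$ uniformly; the second displayed form of $s_n$ is the substitution $\hat f(j)=a_j\vpi(T^j)$. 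Part~(3) then follows by running parts~(1)--(2) verbatim with $\vpi$ replaced by an arbitrary continuous linear functional $\psi$ on $\sA_T$: with $g(\la)=\psi(\ga_\la(S))$ one has $\hat g(j)=a_j\psi(T^j)$ for $j\ge 1$ and $\hat g(j)=0$ for $j\le 0$, so the Fej\'er means of $g$ are $\sigma_{n-1}g(\la)=\sum_{j=1}^n\tfrac{n-j}{n}a_j\psi(T^j)\la^j=\psi(S_n(\la))$, which converge to $g(\la)=\psi(\ga_\la(S))$ uniformly in $\la$. Since $\psi$ was arbitrary, this says $S_n(\la)\to\ga_\la(S)$ weakly, uniformly in $\la \in \bbT$.

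For part~(4), specialize part~(3) to $\la=1$; since $\ga_1$ is the identity automorphism, $S_n(1)\to S$ weakly in $\sA_T$. Thus $S$ lies in the weak closure of the convex hull $K$ of $\{S_n(1):n\ge 1\}$, and by Mazur's theorem the weak and norm closures of the convex set $K$ coincide. Therefore $S$ is the norm limit of a sequence $R_n\in K$, which is the asserted statement.

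I do not expect a genuine obstacle here; the argument is an assembly of standard facts. The one step that requires a little care is the vanishing of the non-positive Fourier coefficients in part~(1), where one must interchange the norm-limit over approximating polynomials with the integral defining $\hat f(j)$; this is legitimate precisely because $\|\ga_\la\|=1$ makes $S'\mapsto\vpi(\ga_\la(S'))$ continuous uniformly in $\la$.
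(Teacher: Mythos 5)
Your proof is correct and follows essentially the same route as the paper: continuity of $f$ plus vanishing of the non-positive Fourier coefficients, Fej\'er's theorem for the uniform convergence of the Ces\`aro means, dualization over arbitrary functionals for the weak convergence, and a Hahn--Banach/Mazur argument to upgrade weak to norm convergence of convex combinations at $\la=1$. The only difference is cosmetic: you spell out the density argument for $\hat f(j)=0$ when $j\le 0$, which the paper simply asserts from $\hat S(k)=0$ for $k\le 0$, and you cite Mazur's theorem where the paper cites Hahn--Banach separation.
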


\begin{proof}
1. By assumption, the map $\la \in \bbT \mapsto \ga_{\la}(S) \in \sA_T$ is norm continuous, and since $\vpi$ is norm continuous, it follows that $f: \bbT \to \bbC$ is
continuous. Now $\hat{S}(k) = 0 $ for $k \leq 0,$ so the same holds for $f.$

2. By Fejer's Theorem, the sequence of arithmetic means of the partial sums of the Fourier series for $f$ converges uniformly to $f$ on $\bbT.$

3. Since, for an arbitrary continuous linear functional $\vpi, \ \vpi(S_n(\la)) = s_n(\la),$ this is just a restatement of [2].

4. Follows from [3] by taking $\la = 1$ and applying the Hahn-Banach separation theorem.
\end{proof}

\begin{notation} \label{n: unitization}
The unitization of $\sA_T$ will be denoted $\ti{\sA}_T.$
\end{notation}
At times it will be convenient to work in $\ti{\sA}_T.$ The gauge action $\ga$ extends naturally to $\ti{\sA}_T$
with $\ga_{\la}(I) = 1 .$ Of course for $S \in \ti{\sA}_T, \ \hat{S}(0)$ may be nonzero.

%%%%%%%%%%%%%%%%%%%%%%%%%%%%%%%%%%%%%%%%%%%%%%%%%%%%%%%%%%%%%%%%%%%%%%%%%%%%%
%%%%%%%%%%%%%%%%%%%%%%%%%%%%%%%%%%%%%%%%%%%%%%%%%%%%%%%%%%%%%%%%%%%%%%%%%%%%%%%
%%%%%%%%%%%%%%%%%%%%%%%%%%%%%%%%%%%%%%%%%%%%%%%%%%%%%%%%%%%%%%%%%%%%%%%%%%%%%%%%
%%%%%%%%%%%%%%%%%%%%%%%%%%%%%%%%%%%%%%%%%%%%%%%%%%%%%%%%%%%%%%%%%%%%%%%%%%%%%%%%%%

\subsection{Nonselfadjoint operator algebras which admit a gauge action}

\begin{example} \label{e: nilpotent}
Let $\sM_2$ be the C$^*$-algebra of $2\times 2$ matrices, with standard matrix units $e_{i, j}, \  {1 \leq i, j \leq 2}.$ Let $T = e_{1,2}.$
Then $\sA_T$ admits a gauge action. Indeed, since $T^2 = 0,$ the operator space $\sA_T = \bbC \cdot T$
is one-dimensional, and the map $\ga_{\la}$ is a linear map with $\ga_{\la}(aT) = \la aT, \ a \in \bbC.$

To see that $\ga$ is completely isometric, it suffices to show that it extends to $\sM_2.$
Define $ U = e_{1, 1} + \la e_{2, 2}.$ Then $ \ga_{\la}(A) = U^* A U \  A \in \sM_2, \la \in \bbT$ extends the action of $\ga$ on $\sA_T$ to the
C$^*$-envelope, $\sM_2.$

Alternatively, we can invoke the description of $\sM_2$ as the universal C$^*$-algebra generated by an operator $T$ which is nilpotent of index $2$ satisfying
\[ T^*T + TT^* = I \]
$T \in \sB(H)$ for some Hilbert space $H,$ and $I$ the identity in $\sB(H).$  Since $\la T$ satisfies these same conditions for $ \la \in \bbT,$
it follows from the universal property that $T \mapsto \la T$ is automorphism of $\sM_2.$

\end{example}

\begin{example} \label{e: disc algebra}
Let $T$ be the multiplication operator on $L^2(\bbT), \ T\xi(z) = z\xi(z).$  The  unital algebra $\ti{\sA}_T$ is the disc algebra $\sA(\bbD),$ and the algebra $\sA_T$ is
the subalgebra of functions $f$ satisfying $f   \perp 1,$ where $1$ is the constant function in $L^2(\bbT).$

The gauge action $\ga_{\la}$ is given by $\ga_{\la}(T)\xi(z) =\la z \xi(z).$  Thus for $f \in \sA(\bbD), \ \ga_{\la}f(z) = f(\la z).$ 
This is isometric, even completely isometric. Indeed, the C$^*-$envelope of $\sA(\bbD)$ is $C(\bbT),$ and the gauge action on the disc algebra is the restriction of the
gauge action on $C(T), \ \ga_{\la}(f)(z) = f(\la z).$

The Fourier series (as defined in equation~\ref{eq: Fourier series} ) of $f \in \sA(\bbD)$ is the usual 
Fourier series of the function $f.$
\end{example}

\begin{example} \label{e: dirichlet}
Let $\{ S_1, \dots S_d\}$ be isometries which satisfy the Cuntz relation $\sum_{j=1}^d S_j S_j^* = I.$ Now if $i_1, \dots, i_n \in \{ 1, \dots, d\}$ and 
$\mu = (i_1, \dots, i_n)$ we write $S_{\mu} = S_{i_1} \dots S_{i_n}$ and $|\mu| = n.$ Let $\sA$ be the Dirichlet algebra generated by the ``monomials''
$S_{\mu} S_{\nu}^*$ with $|\mu| \geq |\nu|.$  Then $\sA$ is a nonself-adjoint subalgebra of the Cuntz algebra $\sO_d.$  Note that $\sA$ is invariant under the canonical
gauge action on $\sO_d.$  Thus, $\sA$ admits a gauge action. The gauge action on this subalgebra of $\fO_n$ was considered in \cite{HPP2005}.
\end{example}

\begin{example} \label{e: popescu}
Let $\{S_1, \dots, S_d\}$ be the isometries of Example~\ref{e: dirichlet}.  If $\sA$ is the nonself-adjoint algebra generated by $\{S_1, \dots, S_d\} \subset \fO_n$,
then $\sA$ admits a gauge action, since it is invariant under the canonical gauge action on $\sO_d.$  This algebra is known as Popescu's noncommutative disc algebra.
\end{example}

\begin{example} \label{e: Cuntz isometry} 
Let $\{S_1, \dots, S_d\}$ be as in Example~\ref{e: dirichlet}.  Here we assume that these operators are represented in some Hilbert space $\sB(H).$  Choose one of the
isometries, say $S_1, $ and let $\ga$ be the canonical gauge action on $\sO_d.$ Since $\ga_{\la}(S_1) = \la S_1,$ it follows that the subalgebra $\sA_{S_1}$ 
generated by $S_1$ of the
Cuntz algebra $\sO_d$ is invariant under $\ga.$  Hence the gauge action on the Cuntz algebra $\sO_d$ restricts to a gauge action on $\sA_{S_1}.$
\end{example}

\begin{example} \label{e: graph}
A variety of examples can be constructed as subalgebras of graph C$^*$-algebras which admit gauge actions.  In this context one can obtain examples
which are analogues of examples~\ref{e: dirichlet}, \ref{e: popescu} and \ref{e: Cuntz isometry}, and where the generating isometries are replaced by
Cuntz-Krieger partial isometries.
\end{example}

Let $\sA$ be an operator algebra, and $\fA = \text{C}_{env}^*(\sA) $ be its C$^*$-envelope.  Then $\sA^* $ is an operator algebra defined as a subalgebra of $\fA.$
\begin{proposition} \label{p: adjoint algebras}
If $\ga$ is a gauge action on the operator algebra $\sA,$ then the adjoint algebra $\sA^*$ admits a gauge action, also denoted by $\ga$ defined by
\[ \ga_{\la}(A^*) = (\ga_{\bar{\la}}(A))^* \ \la \in \bbT, \ A \in \sA \]
\end{proposition}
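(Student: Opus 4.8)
The plan is to verify that the proposed map $\ga_{\la}(A^*) := (\ga_{\bar\la}(A))^*$ is a well-defined continuous homomorphism $\bbT \to Aut(\sA^*)$ satisfying the gauge condition on the generator. First I would check well-definedness: since the C$^*$-envelope $\fA$ of $\sA$ contains $\sA$ as a subalgebra and $\sA^*$ is defined inside $\fA$ via the involution of $\fA$, the adjoint operation is a conjugate-linear isometric bijection from $\sA$ onto $\sA^*$; composing the automorphism $\ga_{\bar\la}$ of $\sA$ with this involution on each side gives a well-defined map on $\sA^*$. Multiplicativity follows from $(AB)^* = B^*A^*$ together with multiplicativity of $\ga_{\bar\la}$ and commutativity of the algebras (or, in the noncommutative setting, simply from the anti-multiplicativity of $*$ matching on both sides: $\ga_{\la}(B^*A^*) = (\ga_{\bar\la}(AB))^* = (\ga_{\bar\la}(A)\ga_{\bar\la}(B))^* = \ga_{\la}(B^*)\ga_{\la}(A^*)$). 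Isometry is immediate since $*$ is isometric on $\fA$ and $\ga_{\bar\la}$ is isometric on $\sA$; bijectivity with inverse $\ga_{\bar\la}$ (on $\sA^*$) is equally routine, so $\ga_{\la} \in Aut(\sA^*)$.

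Next I would check the homomorphism property of $\la \mapsto \ga_{\la}$ on $\bbT$: for $\la, \mu \in \bbT$ and $A \in \sA$, $\ga_{\la}(\ga_{\mu}(A^*)) = \ga_{\la}((\ga_{\bar\mu}(A))^*) = (\ga_{\bar\la}(\ga_{\bar\mu}(A)))^* = (\ga_{\bar\la\bar\mu}(A))^* = (\ga_{\overline{\la\mu}}(A))^* = \ga_{\la\mu}(A^*)$, using that $\la \mapsto \ga_{\la}$ is a homomorphism on $\sA$ and $\overline{\la\mu} = \bar\la\bar\mu$. For the gauge condition, apply this to the generator: $\ga_{\la}(T^*) = (\ga_{\bar\la}(T))^* = (\bar\la T)^* = \la T^*$, so the generator $T^*$ of $\sA^*$ is scaled by $\la$ as required. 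Continuity at each $\la_0 \in \bbT$ follows from $\|\ga_{\la}(A^*) - \ga_{\la_0}(A^*)\| = \|(\ga_{\bar\la}(A) - \ga_{\bar\la_0}(A))^*\| = \|\ga_{\bar\la}(A) - \ga_{\bar\la_0}(A)\| \to 0$ as $\la \to \la_0$ (equivalently $\bar\la \to \bar\la_0$), using the norm continuity of the original gauge action and the fact that $\la \mapsto \bar\la$ is a homeomorphism of $\bbT$; density of elements of the form $A^* + $ finite combinations extends this from a spanning/dense set to all of $\sA^*$ by the usual $\ep/3$ argument if needed, though the estimate above already applies verbatim to every element of $\sA^*$ once we write it as $A^*$ for $A \in \sA$.

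I do not expect any serious obstacle here — the content is essentially bookkeeping with the involution. The one point deserving a sentence of care is the claim that $\sA^*$ is genuinely an operator algebra (so that "gauge action" even makes sense for it): this is exactly the sentence preceding the proposition, namely that $\sA^*$ sits as a subalgebra of $\fA = \mathrm{C}^*_{env}(\sA)$, and one should note that $\ga_{\la}$ need not extend to all of $\fA$ — it is defined intrinsically on $\sA^*$ by the formula, and the formula makes sense because both $\ga_{\bar\la}$ and $*$ are available. If one wants the gauge automorphisms on $\sA^*$ to be completely isometric (cf. Remark~\ref{r: isometric auto}), that follows too, since the involution on $\fA$ is a complete isometry (at the matrix level, $(A_{ij})^* \mapsto (A_{ji}^*)$ is isometric) and $\ga_{\bar\la}$ is completely isometric by hypothesis; composing gives a complete isometry, so I would add this remark at the end.
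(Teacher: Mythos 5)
Your verification is correct and complete; the paper itself gives no argument here, stating only that ``the proof is routine,'' and your write-up is exactly the routine check being alluded to (well-definedness via the involution, the order reversal of $*$ cancelling against itself in the multiplicativity check, the conjugation $\la\mapsto\bar\la$ making the group-homomorphism and continuity properties carry over, and the generator condition $\ga_{\la}(T^*)=\la T^*$). Your closing remark that the action on $\sA^*$ is completely isometric when the original one is matches Remark~\ref{r: isometric auto} and is a worthwhile addition.
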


The proof is routime. 

\begin{proposition} \label{p: extension} Every completely isometric automorphism of a unital operator algebra $\sA$ lifts to a $*$-automorphism of the C$^*$-envelope
C$^*_{\text{env}}(\sA),$ which fixes $\sA$ as a set.
\end{proposition}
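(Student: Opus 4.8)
The plan is to exploit the universal property of the C$^*$-envelope. Write $\iota : \sA \to C^*_{\text{env}}(\sA)$ for the canonical completely isometric unital embedding, and recall that $C^*_{\text{env}}(\sA)$ is characterized among C$^*$-covers of $\sA$ by the following property: whenever $j : \sA \to B$ is a completely isometric unital homomorphism into a C$^*$-algebra $B$ with $B = \mathrm{C}^*(j(\sA))$, there is a unique (necessarily surjective) $*$-homomorphism $\pi : B \to C^*_{\text{env}}(\sA)$ with $\pi \circ j = \iota$. I will also use the elementary fact that a $*$-homomorphism of a C$^*$-algebra that fixes a generating subset $G$ pointwise is the identity: it then fixes $G^*$ pointwise as well, and its fixed-point set is a norm-closed $*$-subalgebra, hence contains $\mathrm{C}^*(G)$.

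First I would observe that, given a completely isometric automorphism $\phi$ of $\sA$ (which is automatically unital, since $\phi(I)$ is an identity for $\sA = \phi(\sA)$), both $\iota \circ \phi$ and $\iota \circ \phi^{-1}$ are completely isometric unital homomorphisms of $\sA$ into $C^*_{\text{env}}(\sA)$; since $\phi$ and $\phi^{-1}$ are surjective, each has image generating $C^*_{\text{env}}(\sA)$ as a C$^*$-algebra. Thus $(C^*_{\text{env}}(\sA),\, \iota\circ\phi)$ and $(C^*_{\text{env}}(\sA),\, \iota\circ\phi^{-1})$ are again C$^*$-covers of $\sA$. Applying the universal property to each, I obtain $*$-homomorphisms $\Phi, \Psi : C^*_{\text{env}}(\sA) \to C^*_{\text{env}}(\sA)$ with $\Phi \circ \iota = \iota \circ \phi$ and $\Psi \circ \iota = \iota \circ \phi^{-1}$ (note that to get a lift of $\phi$ one feeds the universal property the cover built from $\phi^{-1}$, because $\pi \circ j = \iota$ inverts the map $j$).

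Next I would check that $\Phi$ is a $*$-automorphism and is the desired lift. Both $\Psi \circ \Phi$ and $\mathrm{id}$ are $*$-endomorphisms of $C^*_{\text{env}}(\sA)$ that agree on $\iota(\sA)$, since $(\Psi\circ\Phi)(\iota(a)) = \Psi(\iota(\phi(a))) = \iota(\phi^{-1}(\phi(a))) = \iota(a)$ for all $a \in \sA$; because $\iota(\sA)$ generates $C^*_{\text{env}}(\sA)$, the fact quoted above gives $\Psi\circ\Phi = \mathrm{id}$, and symmetrically $\Phi\circ\Psi = \mathrm{id}$. Hence $\Phi$ is a $*$-automorphism. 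Finally $\Phi(\iota(\sA)) = \iota(\phi(\sA)) = \iota(\sA)$, so under the identification of $\sA$ with $\iota(\sA) \subseteq C^*_{\text{env}}(\sA)$ the automorphism $\Phi$ restricts to $\phi$ on $\sA$, and in particular it fixes $\sA$ as a set.

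Since the whole argument is a formal consequence of the universal property, I do not anticipate a serious obstacle. The one point to be careful about is the bookkeeping of directions in the universal property — hence the use of the $\phi^{-1}$-cover to produce a lift of $\phi$, and the pairing of $\Phi$ with $\Psi$ to establish bijectivity — together with the observation that it is the complete isometry of $\phi$, not mere isometry, that makes $\iota\circ\phi$ an admissible map for the universal property. (If one prefers, the same conclusion can be reached via Hamana's injective envelope $I(\sA)$: extend $\phi$ and $\phi^{-1}$ to completely contractive maps on $I(\sA)$, use rigidity to see these are mutually inverse $*$-automorphisms of $I(\sA)$, and note that such an automorphism must carry $C^*_{\text{env}}(\sA) = \mathrm{C}^*(\iota(\sA)) \subseteq I(\sA)$ onto itself; but the universal-property route above is the most economical.)
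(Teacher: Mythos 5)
Your argument is correct. Note that the paper itself does not prove this statement; it simply cites Proposition~10.1 of Davidson--Katsoulis, so there is no internal proof to compare against. What you have written is the standard couniversal-property argument, and you handle the one genuinely delicate point correctly: since the universal property produces, for a C$^*$-cover $(B,j)$, a surjection $\pi$ with $\pi\circ j=\iota$, the cover $(\mathrm{C}^*_{\mathrm{env}}(\sA),\iota\circ\phi)$ yields a lift of $\phi^{-1}$ rather than of $\phi$, so one must feed in the cover built from $\phi^{-1}$ to lift $\phi$ itself. The remaining steps --- that $\phi$ is automatically unital, that $\iota\circ\phi^{\pm1}$ have generating range so the covers are legitimate, that a \star endomorphism fixing a generating set pointwise is the identity (its fixed-point set being a closed \star subalgebra), and hence that $\Psi\circ\Phi=\Phi\circ\Psi=\mathrm{id}$ --- are all sound, and $\Phi\circ\iota=\iota\circ\phi$ gives both that $\Phi$ extends $\phi$ and that it fixes $\sA$ as a set. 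Your parenthetical remark that complete (not merely plain) isometry is what makes $\iota\circ\phi$ admissible is also the right thing to flag, since it is exactly the hypothesis the paper needs when invoking this proposition. The alternative route through Hamana's injective envelope and rigidity that you sketch is closer in spirit to how such results are usually established in the literature, but the couniversal-property argument you give in full is self-contained and complete.
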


This is Proposition~10.1 of \cite{DaKa2011}. This tells us that if a unital operator algebra $\sA$ admits a gauge action $\ga,$ then each $\ga_{\la}$
extends to an automorphism, which we also denote by ${\ga}_{\la},$ of the C$^*$-envelope, but does not immediately imply that the map $\la \in \bbT \mapsto {\ga}_{\la}$
is continuous on the C$^*$-envelope.

%%%%%%%%%%%%%%%%%%%%%%%%%%%%%%%%%%%%%%%%%%%%%%%%%%%%%%%%%%%%%%%%%%%%%%%%%%%%%%
%%%%%%%%%%%%%%%%%%%%%%%%%%%%%%%%%%%%%%%%%%%%%%%%%%%%%%%%%%%%%%%%%%%%%%%%%%%%%%%
%%%%%%%%%%%%%%%%%%%%%%%%%%%%%%%%%%%%%%%%%%%%%%%%%%%%%%%%%%%%%%%%%%%%%%%%%%%%%%%%%%%%
\subsection{Examples of operators in Hilbert space which do not admit a gauge action}

\begin{example} \label{e: projection}
Let $ 0 \neq P$ be a projection in $\sB(H).$ As in Example~\ref{e: nilpotent} $\sA_P$ is one-dimensional, but in this case does not admit a gauge action.  
Indeed, since $P = P^2,$ if $\ga$ were a gauge action on $\sA_P$ we would have
\[ \la P = \ga_{\la}(P) = \ga_{\la}(P^2) = \ga_{\la}(P) \la_{\la}(P) = \la^2 P, \ \la \in \bbT \]
which is absurd.

\end{example}

\begin{example} \label{e: linear dependence}
More generally, suppose that $T \in \sB(H)$ is such that, for some $n > 1, 0 \neq T^n$ and the set $\{ T, T^2, \dots T^n\}$ is linearly dependent. Then $\sA_T$ does not admit
a gauge action.

Indeed, suppose to the contrary that $\sA_T$ admits a gauge action $\ga,$ and, choosing a dependence relation of minimal degree,
we can assume  that $a_1T + \cdots + a_mT^m = 0, \ m \leq n$ and $a_m \neq 0.$

Then 
\[0 = \int_{\bbT} \ga_{\la}(\sum_{k=1}^m (a_kT^k) \la^{-m}\, d|\la| = a_mT^m\]
Since $T^m \neq 0,$ it follows that $a_m = 0,$  a contradiction.

\end{example}

\begin{example} \label{e: nonisometric} 
Let $H$ be a Hilbert space with orthonormal basis $\{e_n\}_{n=1}^{\infty},$ and let $T \in \sB(H)$ be the operator defined by
$Te_n = \frac{1}{n} e_n, \ n \geq 1.$ We claim that the operator algebra $\sA_T$ does not admit a gauge action.

Consider the operator $T - T^2 \in \sA_T.$ This is a compact, self-adjoint operator in $\sB(H),$ so its norm is the maximum of the absolute values of the eigenvalues.
$||T - T^2|| = ||(T - T^2)e_2||_2 = \frac{1}{4}.$

Suppose that $\sA_T$ admits a gauge action $\ga.$ Then $\ga_{\la}(T - T^2) = \la T - \la^2 T^2,$ so for $\la = -1$ we obtain $-T - T^2.$
Computing $||-T - T^2||$ we have $||-T - T^2|| = ||(-T - T^2)e_1||_2 = 2.$

This is a contradiction, since by definition the gauge action is isometric on $\sA_T.$

\end{example}

%%%%%%%%%%%%%%%%%%%%%%%%%%%%%%%%%%%%%%%%%%%%%%%%%%%%%%%%%%%%%%%%%%%%%%%%%%%%
%%%%%%%%%%%%%%%%%%%%%%%%%%%%%%%%%%%%%%%%%%%%%%%%%%%%%%%%%%%%%%%%%%%%%%%%%%%%%%
%%%%%%%%%%%%%%%%%%%%%%%%%%%%%%%%%%%%%%%%%%%%%%%%%%%%%%%%%%%%%%%%%%%%%%%%%%%%%%%%%

%%%%%%%%%%%%%%%%%%%%%%%%%%%%%%%%%%%%%%%%%%%%%%%%%%%%%%%%%%%%%%%%%%%%%%%%%%%%%%%%%%%5
%%%%%%%%%%%%%%%%%%%%%%%%%%%%%%%%%%%%%%%%%%%%%%%%%%%%%%%%%%%%%%%%%%%%%%%%%%%%%%%%%%
%%%%%%%%%%%%%%%%%%%%%%%%%%%%%%%%%%%%%%%%%%%%%%%%%%%%%%%%%%%%%%%%%%%%%%%%%%%%%%%%%%%
%%%%%%%%%%%%%%%%%%%%%%%%%%%%%%%%%%%%%%%%%%%%%%%%%%%%%%%%%%%%%%%%%%%%%%%%%%%%%%%%%%%%%

\subsection{Gauge invariant Ideals in Operator algebras with gauge actions}

Let $\sA_T$ be the operator algebra generated by an operator $T \in \sB(H),$ and suppose $\sA_T$ admits a gauge action $\ga.$  A closed ideal $\sJ \subset \sA_T$ is 
\emph{gauge invariant} if, whenever $S \in \sJ, $ then $\ga_{\la}(S) \in \sJ \ (\la \in \bbT).$

\begin{proposition} \label{p: gauge invariant ideal}
Let $\sJ \neq (0)$ be a gauge invariant ideal in $\sA_T.$  Then there exists $n \in \bbN$ such that $\sJ= {<}T^n{>}.$  That is, $\sJ$ is the closed ideal in $\sA_T$ generated by $T^n.$
\end{proposition}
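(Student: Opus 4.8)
The plan is to use the Fourier coefficients to pin down the smallest power of $T$ that can appear in a nonzero element of $\sJ$. First I would let
\[ n = \min\{ k \geq 1 : \hat{S}(k) \neq 0 \text{ for some } S \in \sJ \}, \]
which is well-defined since $\sJ \neq (0)$ and, by Lemma~\ref{l: unique}, every nonzero $S \in \sA_T$ has some nonzero Fourier coefficient. Pick $S_0 \in \sJ$ with $\hat{S}_0(n) \neq 0$. The key point is that $\hat{S}_0(n) T^n$ is itself obtained from $S_0$ by integrating against $\la^{-n}$, and because $\sJ$ is gauge invariant and norm closed, the vector-valued integral
\[ \int_{\bbT} \ga_{\la}(S_0) \, \la^{-n} \, d|\la| = \hat{S}_0(n) T^n \]
lies in $\sJ$ (it is a norm limit of convex combinations, i.e.\ Riemann sums, of elements $\ga_{\la}(S_0) \la^{-n} \in \sJ$). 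Dividing by the nonzero scalar $\hat{S}_0(n)$ shows $T^n \in \sJ$, hence ${<}T^n{>} \subseteq \sJ$.

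For the reverse inclusion, take an arbitrary $S \in \sJ$ and apply Proposition~\ref{p: reconstruction}: there is a sequence $R_m$ in the convex hull of the Fejér-type partial sums $S_m(1) = \sum_{j=1}^m \frac{m-j}{m} a_j T^j$ (where $S \sim \sum_j a_j T^j$) converging in norm to $S$. By minimality of $n$, the Fourier coefficients $a_j = \hat{S}(j)$ vanish for $1 \leq j < n$, so each $S_m(1)$ — and hence each $R_m$ — is a polynomial in $T$ with no terms of degree below $n$, i.e.\ $R_m = T^n q_m(T)$ for a polynomial $q_m$. Thus $R_m \in {<}T^n{>}$, and since ${<}T^n{>}$ is norm closed, $S = \lim_m R_m \in {<}T^n{>}$. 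This gives $\sJ \subseteq {<}T^n{>}$ and completes the proof.

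The step I expect to require the most care is the justification that $\hat{S}_0(n) T^n \in \sJ$: one must argue that the $\sA_T$-valued integral $\int_{\bbT} \ga_{\la}(S_0)\la^{-n}\,d|\la|$ can be approximated in operator norm by Riemann sums $\sum_i \ga_{\la_i}(S_0)\la_i^{-n}\,\Delta_i$, which is legitimate precisely because $\la \mapsto \ga_{\la}(S_0)$ is norm continuous (part of the definition of a gauge action) and $\bbT$ is compact; each such sum lies in the closed ideal $\sJ$ since every $\ga_{\la_i}(S_0)$ does. A secondary point worth a sentence is that $\hat{S}(j) = a_j$ for $1 \le j < n$ vanishing forces the Fejér means to have no low-degree terms — this is immediate from the explicit formula in Proposition~\ref{p: reconstruction}(4), but it is the hinge that makes the reverse inclusion work.
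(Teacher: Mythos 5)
Your proposal is correct and follows essentially the same route as the paper: define $n$ as the least index at which some element of $\sJ$ has a nonvanishing Fourier coefficient, use gauge invariance and closedness of $\sJ$ to place $\hat{S}_0(n)T^n = \int_{\bbT}\ga_{\la}(S_0)\la^{-n}\,d|\la|$ in $\sJ$, and use the Fej\'er approximation of Proposition~\ref{p: reconstruction} for the reverse inclusion. You in fact supply more detail than the paper does (the Riemann-sum justification and the explicit observation that the Fej\'er means of an $S$ with $\hat{S}(j)=0$ for $j<n$ lie in ${<}T^n{>}$), which the paper leaves implicit.
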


\begin{proof}
Let $n = \inf\{k \geq 1: \hat{S}(k) \neq 0 \text{ for some } S \in \sJ\}.$ Thus, there exists $S \in \sJ$ with $\int_{\bbT} \ga_{\la}(S)\la^{-n} \, d|\la| = a_n T^n \neq 0.$
Since $\sJ$ is closed and gauge invariant, $T^n \in \sJ.$ It follows that any $S \in \sA_T $ with Fourier series $S \sim \sum_{k=n}^{\infty} c_k T^k \in \sJ.$
Thus, ${<}T^n{>} \subset \sJ.$ That is, the closed ideal generated by $T^n$ is contained in $\sJ.$

On the other hand, let $S \in \sJ.$ Then, by definition of $n, \ S $ has Fourier series of the form $\sum_{k=n}^{\infty} c_k T^k,$ so that $\sJ \subset {<}T^n{>}.$

\end{proof} 

One ideal which is invariant under the gauge action is the Jacobson radical; indeed, it is invariant under all isometric automorphisms.
\begin{corollary} \label{c: radicals}
Let $T \in \sB(H)$ be an operator such that $\sA_T$ admits a gauge action. Then either $\sA_T$ is semi-simple, or $\sA_T$ is radical.
\end{corollary}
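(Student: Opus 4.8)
The plan is to apply Proposition~\ref{p: gauge invariant ideal} to the Jacobson radical $\rad(\sA_T)$. As noted just before the statement, $\rad(\sA_T)$ is invariant under every isometric automorphism of $\sA_T$, hence under each $\ga_{\la}$, so it is a gauge invariant ideal. First I would split into two cases. If $\rad(\sA_T) = (0)$ there is nothing to prove: $\sA_T$ is semi-simple by definition. So the real content is the case $\rad(\sA_T) \neq (0)$, where I must show $\rad(\sA_T) = \sA_T$, i.e. that $\sA_T$ is radical.

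In that case Proposition~\ref{p: gauge invariant ideal} supplies an $n \in \bbN$ with $\rad(\sA_T) = \langle T^n\rangle$; in particular $T^n \in \rad(\sA_T)$. Since every element of the Jacobson radical of a Banach algebra is quasinilpotent, $T^n$ is quasinilpotent, so $r(T^n) = 0$; because $r(T^n) = r(T)^n$ it follows that $r(T) = 0$, i.e. $T$ is quasinilpotent. Now $\sA_T$ is commutative, being the norm closure of the mutually commuting polynomials in $T$, and in a commutative Banach algebra the set of quasinilpotent elements is precisely the Jacobson radical. Hence $T \in \rad(\sA_T)$, so the closed ideal $\langle T\rangle$ is contained in $\rad(\sA_T)$.

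To finish I would observe that $\langle T\rangle = \sA_T$: the closed ideal generated by $T$ contains $T\cdot T^k = T^{k+1}$ for every $k \geq 0$, hence contains the closed linear span of $\{T, T^2, T^3, \dots\}$, which is all of $\sA_T$. Combining, $\sA_T = \langle T\rangle \subseteq \rad(\sA_T) \subseteq \sA_T$, so $\rad(\sA_T) = \sA_T$ and $\sA_T$ is radical. (Alternatively, once $T$ is known to be quasinilpotent one can argue directly: every polynomial $p$ with $p(0)=0$ factors as $p(T) = T\cdot q(T)$ with $T$ and $q(T)$ commuting in $\ti{\sA}_T$, whence $r(p(T)) \le r(T)\,r(q(T)) = 0$; since the quasinilpotents form a closed set, every element of $\sA_T$ is then quasinilpotent, i.e. $\sA_T = \rad(\sA_T)$.)

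I do not expect a genuine obstacle here. The one step that needs a moment's care is the passage $T^n \in \rad(\sA_T) \Rightarrow T \in \rad(\sA_T)$, which rests on three standard facts: radical elements of a Banach algebra are quasinilpotent, the spectral radius satisfies $r(a^n) = r(a)^n$, and for a commutative Banach algebra the radical coincides with the set of quasinilpotent elements. Everything else is bookkeeping with Proposition~\ref{p: gauge invariant ideal} and the gauge invariance of $\rad(\sA_T)$.
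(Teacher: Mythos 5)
Your proof is correct and follows essentially the same route as the paper: apply Proposition~\ref{p: gauge invariant ideal} to the gauge-invariant Jacobson radical to get $\rad(\sA_T) = \langle T^n\rangle$, deduce that $T$ is quasinilpotent (the paper invokes the Spectral Mapping Theorem where you use $r(T^n) = r(T)^n$, which is the same fact), and conclude that $\sA_T = \langle T\rangle \subseteq \rad(\sA_T)$. You merely make explicit two steps the paper leaves implicit, namely that the quasinilpotents form the radical in a commutative Banach algebra and that $\langle T\rangle = \sA_T$.
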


\begin{proof}
Let $\sJ \neq (0) $ denote the Jacobson radical of $\sA_T.$  Since the Jacobson radical is invariant under all isometric automorphisms, 
by Proposition~\ref{p: gauge invariant ideal} it follows that if the 
Jacobson radical is nonzero, there is an $n \in \bbN$ such that
$\sJ = {<}T^n{>}.$  But if $T^n$ is quasinilpotent, that is, has spectrum $\{0\},$ it follows from the Spectral Mapping Theorem that $T$ has spectrum $\{0\}.$
Hence, the ideal generated by $T,$ which is $\sA_T$ is in the Jacobson radical.
\end{proof}

\begin{example} \label{e: nontrivial radical}
Here we note that it can happen that if $T \in \sB(H)$ does not admit a gauge action, then we can have $(0) \neq Rad(\sA_T) \neq \sA_T.$

Let $H = H_1 \oplus H_2$ and $T = I_1 \oplus N,$ where $I_1$ is the identity on $H_1$ and $N \in \sB(H_2)$ is a nonzero nilpotent, with $N^2 = 0.$
 Let $p(z) = z - z^2.$
Then $p(T) = 0 \oplus N \in Rad(\sA_T),$ so that while $\sA_T$ is not a radical algebra, it has a non-trival Jacobson radical.
\end{example}

The disc algebra $\sA(\bbD)$ has a rich lattice of ideals. (\cite{KHof62}) Not unexpectedly, there are few gauge invariant ideals.
\begin{corollary} \label{c: disc algebra}
If $\sJ$ is a gauge invariant closed ideal of $\sA(\bbD),$ then (in the notation of Example~\ref{e: disc algebra}) $\sJ = {<}z^n{>}$ for some $n \in \bbN.$
\end{corollary}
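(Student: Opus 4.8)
The plan is to run the argument of Proposition~\ref{p: gauge invariant ideal} essentially verbatim, but inside the unital algebra $\ti{\sA}_T = \sA(\bbD)$ rather than $\sA_T$, the only change being that the index of the lowest nonzero Fourier coefficient is now permitted to equal $0$. Recall from Example~\ref{e: disc algebra} (together with Notation~\ref{n: unitization} and the paragraph following it) that for $f \in \sA(\bbD)$ the Fourier coefficient $\hat f(k)$ determined by $\hat f(k)\,z^k = \int_{\bbT}\ga_\la(f)\,\la^{-k}\,d|\la|$ is just the $k$-th Taylor coefficient of $f$ at the origin for $k \ge 0$, and vanishes for $k < 0$. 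The degenerate cases are disposed of at once: if $\sJ = (0)$ there is nothing to prove, and if $\sJ = \sA(\bbD)$ then $\sJ = {<}1{>} = {<}z^0{>}$; so one may assume $\sJ$ is a nonzero proper gauge-invariant closed ideal and aim for $n \ge 1$.

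First I would set $n = \inf\{k \ge 0 : \hat f(k) \ne 0 \text{ for some } f \in \sJ\}$, a well-defined nonnegative integer: it is finite because $\sJ \ne (0)$ and a nonzero element of $\sA(\bbD)$ has a nonzero Taylor series (the unitized analogue of Lemma~\ref{l: unique}), and one checks $n \ge 1$ since a proper ideal cannot contain an $f$ with $\hat f(0) \ne 0$ (the next step would then force $1 \in \sJ$). Choose $f \in \sJ$ with $\hat f(n) \ne 0$. Since $\sJ$ is norm closed and gauge invariant, the $\sA(\bbD)$-valued integral $\int_{\bbT}\ga_\la(f)\,\la^{-n}\,d|\la| = \hat f(n)\,z^n$ lies in $\sJ$ (approximate the integral by Riemann sums of elements of $\sJ$, exactly as in the proofs of Lemma~\ref{l: unique} and Proposition~\ref{p: gauge invariant ideal}); dividing by the nonzero scalar $\hat f(n)$ gives $z^n \in \sJ$, hence ${<}z^n{>} \subseteq \sJ$.

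For the reverse inclusion, take an arbitrary $g \in \sJ$. By minimality of $n$ we have $\hat g(k) = 0$ for all $k < n$. The quickest honest way to conclude, staying entirely within the machinery of the paper, is to invoke Proposition~\ref{p: reconstruction}(4): every Fej\'er partial sum $S_m(1) = \sum_{j=1}^m \frac{m-j}{m}\,\hat g(j)\,z^j$ is a polynomial multiple of $z^n$, hence so is every $R_m$ in their convex hull, and since $R_m \to g$ in norm while ${<}z^n{>}$ is closed, $g \in {<}z^n{>}$. (Equivalently, since the Taylor series of $g$ vanishes to order $n$ at $0$, one has $g(z) = z^n h(z)$ with $h \in \sA(\bbD)$, the only candidate singularity $z=0$ being removable and $z^{-n}$ being continuous on a neighbourhood of $\bbT$ in $\ol{\bbD}$, so $g \in z^n\sA(\bbD) \subseteq {<}z^n{>}$.) Combining the two inclusions yields $\sJ = {<}z^n{>}$.

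There is no serious obstacle: the entire content already resides in Proposition~\ref{p: gauge invariant ideal}, whose proof transfers without change. The only point deserving a sentence of care is the bookkeeping around the unit — the disc algebra is unital whereas $\sA_T$ is not, so the minimal Fourier index can be $0$, and that case (which gives the whole algebra) must be separated out so that the ``$n \in \bbN$'' in the statement is read with $n \ge 1$ for proper ideals.
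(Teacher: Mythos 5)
Your proposal is correct and is essentially the paper's own argument: the paper simply cites Proposition~\ref{p: gauge invariant ideal}, and you rerun that proposition's proof (minimal Fourier index, gauge-averaging to extract $z^n$, Fej\'er sums for the reverse inclusion) in the unital setting. Your extra care about the unit — showing a proper gauge-invariant ideal cannot have a nonzero zeroth coefficient, so it sits inside the non-unital subalgebra where the proposition applies verbatim — is a worthwhile detail the paper glosses over, but it does not change the method.
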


\begin{proof}
The conclusion follows immediately from Proposition~\ref{p: gauge invariant ideal}.
\end{proof}

%%%%%%%%%%%%%%%%%%%%%%%%%%%%%%%%%%%%%%%%%%%%%%%%%%%%%%%%%%%%%%%%%%%%%%%%%%%%%%%%
%%%%%%%%%%%%%%%%%%%%%%%%%%%%%%%%%%%%%%%%%%%%%%%%%%%%%%%%%%%%%%%%%%%%%%%%%%%%%%%%%%%%%
%%%%%%%%%%%%%%%%%%%%%%%%%%%%%%%%%%%%%%%%%%%%%%%%%%%%%%%%%%%%%%%%%%%%%%%%%%%%%%%%%%%%%%%
%%%%%%%%%%%%%%%%%%%%%%%%%%%%%%%%%%%%%%%%%%%%%%%%%%%%%%%%%%%%%%%%%%%%%%%%%%%%%%%%%%%%%%%%
\section{Operator algebras generated by weighted shifts} \label{s: weighted shifts}

In this section, $T$ will denote a weighted shift operator.  Let  $\{e_n\}_{n\geq 0}$ be an orthonormal basis for the Hilbert space $H,$  with
$Te_n = a_n e_{n+1}, \ n \geq 0,$ and $a_n \neq 0$ for all $n.$  Since $T$ is bounded, the sequence $\{a_n\}$ is bounded, and $||T|| = \sup_n |a_n|.$

We begin by showing that the operator algebra admits a gauge action.
\begin{lemma} \label{l:gauge action for shift}
Let $T$ be as above. Then $\sA_T$ admits a gauge action.
\end{lemma}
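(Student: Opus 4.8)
The plan is to construct explicitly, for each $\la \in \bbT$, a unitary $U_\la$ on $H$ that implements the map $T \mapsto \la T$ by conjugation, and then to check that the resulting family of automorphisms $\ga_\la(S) = U_\la^* S U_\la$ restricts to $\sA_T$ and varies continuously. Since $\{e_n\}_{n \geq 0}$ is an orthonormal basis, the natural choice is the diagonal unitary $U_\la e_n = \la^n e_n$. Then $U_\la^* T U_\la e_n = U_\la^* T (\la^n e_n) = \la^n U_\la^*(a_n e_{n+1}) = \la^n \bar{\la}^{n+1} a_n e_{n+1} = \bar{\la}\, a_n e_{n+1} = \bar{\la}\, T e_n$, so conjugation by $U_\la$ sends $T$ to $\bar{\la} T$. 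Hence setting $\ga_\la(S) = U_{\bar\la}^* S U_{\bar\la}$ (equivalently $U_\la S U_\la^*$) gives $\ga_\la(T) = \la T$.

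Next I would verify that $\ga_\la$ maps $\sA_T$ into itself. This is immediate on polynomials in $T$ vanishing at the origin, since $\ga_\la(p(T)) = p(\la T)$ is again such a polynomial; and because each $\ga_\la$ is a $\star$automorphism of $\sB(H)$, in particular isometric, it carries norm-limits of such polynomials to norm-limits of such polynomials, i.e.\ $\ga_\la(\sA_T) \subseteq \sA_T$. Applying the same reasoning to $\ga_{\bar\la}$ gives surjectivity, so $\ga_\la$ is an isometric (indeed completely isometric, being a restriction of a $\star$automorphism of $\sB(H)$) automorphism of $\sA_T$. The homomorphism property $\ga_{\la\mu} = \ga_\la \circ \ga_\mu$ follows from $U_{\la\mu} = U_\la U_\mu$ on the diagonal.

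The remaining point, and the one requiring a little care, is continuity of $\la \mapsto \ga_\la(S)$ in operator norm for fixed $S \in \sA_T$. The map $\la \mapsto U_\la$ is \emph{not} norm-continuous on all of $\sB(H)$ (the diagonal unitaries jump), so one cannot argue crudely. Instead I would use the density of polynomials: given $S \in \sA_T$ and $\ep > 0$, pick a polynomial $p$ with $p(0)=0$ and $\|S - p(T)\| < \ep/3$; then for $\la$ near $\la_0$, $\|\ga_\la(S) - \ga_{\la_0}(S)\| \leq \|\ga_\la(S) - \ga_\la(p(T))\| + \|p(\la T) - p(\la_0 T)\| + \|\ga_{\la_0}(p(T)) - \ga_{\la_0}(S)\| < \ep/3 + \|p(\la T) - p(\la_0 T)\| + \ep/3$, using that each $\ga_\la$ is isometric. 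Finally $\|p(\la T) - p(\la_0 T)\| \to 0$ as $\la \to \la_0$ because $p(\la T) = \sum_{j=1}^n a_j \la^j T^j$ is a finite sum depending continuously (in norm) on $\la$. This $\ep/3$ argument, exploiting isometry of the $\ga_\la$ to reduce to the polynomial case, is the crux; everything else is routine.
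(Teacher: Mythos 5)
Your proof is correct and follows essentially the same route as the paper: the diagonal unitary $W_\la e_n = \la^n e_n$ implementing $T \mapsto \la T$ by conjugation, followed by the $\ep/3$ density argument (isometry of each $\ga_\la$ plus norm-continuity of $\la \mapsto p(\la T)$ for polynomials) to establish continuity of the action. The extra details you supply on surjectivity, the group homomorphism property, and complete isometry are correct and consistent with the paper's Remark~\ref{r: Cstar cover}.
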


\begin{proof}
With $\{ e_n\}_{n \geq 0}$ as above, define the unitary $W_{\la}, \ \la \in \bbT,$ by $W_{\la}e_n = \la^n e_n.$
 Now
\[ W_{\la}TW_{\la}^*e_n = W_{\la} T(\bar{\la^n}e_n) = \bar{\la^n} a_n W_{\la} e_{n+1} = \bar{\la^n} \la^{n+1} a_n e_{n+1} = \la Te_n \]
holds for any $n \geq 0,$ and since the $\{e_n\}_{n \geq 0}$ form a basis, we have $W_{\la}TW_{\la}^* = \la T.$  Thus  the map
$\la \in \bbT \mapsto W_{\la}TW_{\la}^* \in \sB(H)$ is continuous., and so $\la \mapsto (W_{\la}TW_{\la}^*)^n = W_{\la}T^nW_{\la}^*$ is continuous, and hence 
$\la \mapsto W_{\la}p(T)W_{\la}^*$ for any polynomial $p$ with $p(0) = 0.$  Now if $S \in \sA_T$ and $\ep > 0$ is given, there is a polynomial $p$ with
$||p(T) - S|| < \ep/3.  $  Now let $\la_0 \in \bbT$ and $\de > 0 $ be such that if $ |\la - \la_0| < \de,$ then $||W_{\la}p(T)W_{\la}^* - W_{\la_0}p(T)W_{\la_0}^*|| < \ep/3.$
Then
\begin{align*}
||W_{\la}SW_{\la}^* - W_{\la_0}SW_{\la_0}^* || &\leq ||W_{\la}(S - p(T))W_{\la}^* || + \\
	& ||W_{\la}p(T)W_{\la}^* - W_{\la_0}p(T)W_{\la_0}^*|| + || W_{\la_0}(p(T) - S)W_{\la_0}^*|| \\
	&< \ep/3 + \ep/3 + \ep/3
\end{align*}
Thus $\ga_{\la}(S) = W_{\la}SW_{\la}^*$ is a gauge action on $\sA_T.$

\end{proof}

\begin{remark} \label{r: Cstar cover}
We claim, furthermore, that the action is completely isometric.  Now the C$^*$-algebra generated by $T$ in $\sB(H), \ C^*(T),$ is a C$^*$-cover for $\sA_T,$ 
and the action of $\ga_{\la}$ on $\sA_T$ is the restriction to $\sA_T$ of the automorphism $ S \in \text{C}^*(T) \mapsto {\ga}_{\la}(S) := W_{\la}SW_{\la}^*.$

While it is clear that ${\ga}_{\la}$ is isometric on the C$^*$-cover, it is not obvious that the map $\la \in \bbT \mapsto {\ga}_{\la}$ is continuous, since
$\la \mapsto W_{\la}$ is not continuous.

It is more convenient to work with the unital algebras $\ti{\sA}_T,\ \ti{\sA}^*_T.$  The C$^*$ cover of $\ti{\sA_T} \subset \sB(H)$ is  the closure in $\sB(H)$ of the union
\[ \bigcup_{n=1}^{\infty} (\ti{\sA}_T^* \ti{\sA}_T)^n \subset \sB(H) \]

Now since the action is continuous on $\ti{\sA}_T$ and $\ti{\sA}_T^*,$ (Proposition~\ref{p: adjoint algebras})
 it is continuous on $(\ti{\sA}_T^* \ti{\sA}_T)^n.$  And since it is isometric, it is thus
continuous on the closure of the union.

Thus, the gauge action on $\sA_T$ is the restriction of a gauge action on a C$^*$-cover.
\end{remark}

\begin{lemma} \label{l: partial sums}
Let $T$ be as in Lemma~\ref{l:gauge action for shift}  If $S \in \sA_T,$ and 
\[ Se_0 = \sum_{n=1}^{\infty} c_n e_n,  \text{ then } c_n = \hat{S}(n) a_0 \dots a_{n-1} \]

\end{lemma}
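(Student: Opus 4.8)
The plan is to prove the formula first for polynomials in $T$ and then pass to the norm limit. For the polynomial case, note that $T^j e_0 = a_0 a_1 \cdots a_{j-1} e_j$ by an immediate induction on $j$ (using $Te_n = a_n e_{n+1}$). Hence for any polynomial $p(z) = \sum_{j=1}^m b_j z^j$ with $p(0) = 0$ we have
\[ p(T) e_0 = \sum_{j=1}^m b_j\, (a_0 a_1 \cdots a_{j-1})\, e_j, \]
while the computation carried out just before Lemma~\ref{l: unique} gives $\widehat{p(T)}(j) = b_j$ for $1 \le j \le m$ and $\widehat{p(T)}(j) = 0$ otherwise. Taking the inner product with $e_j$ therefore yields $\langle p(T) e_0, e_j \rangle = \widehat{p(T)}(j)\, a_0 \cdots a_{j-1}$, which is exactly the asserted identity in the polynomial case.

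Next I would record that the $j$-th Fourier coefficient depends continuously on $S$. From the defining formula $\hat{S}(j)\, T^j = \int_{\bbT} \ga_{\la}(S)\, \la^{-j}\, d|\la|$, together with the fact that each $\ga_{\la}$ is isometric (so $\|\ga_{\la}(S)\| = \|S\|$), that $|\la^{-j}| = 1$, and that $d|\la|$ is normalized arc length, we obtain $\|\hat{S}(j)\, T^j\| \le \|S\|$. Since all $a_n \neq 0$ forces $T^j \neq 0$, this gives $|\hat{S}(j)| \le \|S\|/\|T^j\|$, so $S \mapsto \hat{S}(j)$ is a bounded linear functional on $\sA_T$.

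Finally, given $S \in \sA_T$, choose polynomials $p_k$ with $p_k(0) = 0$ and $\|p_k(T) - S\| \to 0$. Then $\|p_k(T) e_0 - S e_0\| \le \|p_k(T) - S\| \to 0$, so $\langle p_k(T) e_0, e_n \rangle \to \langle S e_0, e_n \rangle = c_n$ for each $n \ge 1$; and by the previous paragraph $\widehat{p_k(T)}(n) \to \hat{S}(n)$. Combining these with the polynomial identity applied to each $p_k$,
\[ c_n = \lim_k \langle p_k(T) e_0, e_n \rangle = \lim_k \widehat{p_k(T)}(n)\, a_0 \cdots a_{n-1} = \hat{S}(n)\, a_0 \cdots a_{n-1}, \]
as claimed. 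There is no real obstacle here; the only point requiring a little care is the passage from polynomials to general $S$, which rests on the norm-continuity of the map $S \mapsto \hat{S}(n)$ established from the integral formula and the isometry of the gauge automorphisms. The rest is bookkeeping.
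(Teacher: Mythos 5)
Your proof is correct, but it is organized differently from the paper's. The paper computes $c_n$ for a general $S \in \sA_T$ in one shot: it observes that $\int_{\bbT} \la^{-n} W_{\la} v \, d|\la| = \langle v, e_n\rangle e_n$ for every $v \in H$, and that $W_{\la}^* e_0 = e_0$, so that
\[ c_n e_n = \int_{\bbT} W_{\la}(Se_0)\,\la^{-n}\, d|\la| = \Bigl(\int_{\bbT} \ga_{\la}(S)\,\la^{-n}\, d|\la|\Bigr)e_0 = \hat{S}(n)\,T^n e_0, \]
with no approximation by polynomials needed. You instead verify the identity on polynomials, prove that $S \mapsto \hat{S}(n)$ is a bounded linear functional via the estimate $\|\hat{S}(n)T^n\| \le \|S\|$ (using the isometry of $\ga_{\la}$ and $T^n \neq 0$), and pass to the norm limit. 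Both arguments are valid. The paper's is shorter and exploits the specific unitary implementation $\ga_{\la} = \mathrm{Ad}(W_{\la})$ together with the fact that $e_0$ is fixed by each $W_{\la}$; yours uses only the abstract integral formula for the Fourier coefficients and the isometry of the gauge action, and the continuity estimate $|\hat{S}(n)| \le \|S\|/\|T^n\|$ you establish along the way is independently useful (for instance, it gives directly the convergence $\widehat{p_k}(n) \to \hat{S}(n)$ that the paper later extracts from the Fej\'er construction in Remark~\ref{r: cyclic and separating vector}).
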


\begin{proof}
Let $W_{\la} \ (\la \in \bbT)$ be the family of unitary operators from Lemma~\ref{l:gauge action for shift}, so that $W_{\la}e_n = \la^n e_n.$
Let $v$ be a linear combination of basis vectors. Since $\int_{\bbT} \la^{-n} W_{\la} v \, d|\la|$ is a multiple of $e_n,$ it follows that,  
\[\int_{\bbT} \la^{-n} W_{\la} v \, d|\la| = <v, e_n> e_n \]
 This holds for arbitrary vectors in $H.$

Suppose $Se_0 = \sum_{k=1}^{\infty} c_k e_k.$ Then
\begin{align*}
c_n e_n &= <Se_0, e_n>e_n \\
	&= \int_{\bbT} W_{\la}(Se_0)\, \la^{-n}\, d|\la| \\
	&= \int_{\bbT} W_{\la}SW_{\la}^*e_0 \, \la^{-n} \, d|\la| \\
	& = (\int_{\bbT} W_{\la}SW_{\la}^* \la^{-n} \, d|\la|)e_0 \\
	&= (\int_{\bbT} \ga_{\la}(S) \, \la^{-n} \, d|\la|)e_0 \\
	&= \hat{S}(n)T^ne_0
\end{align*}
where we have used that $W_{\la}^*e_0 = e_0.$  

Thus, $c_n e_n = \hat{S}(n)T^ne_0,$ so that $c_n = \hat{S}(n) (a_0 a_1 \cdots a_{n-1}).$ 

\end{proof}

\begin{remark} \label{r: cyclic and separating vector}
Let $T$ be as above, and $\ti{\sA}_T$ the unitization of $\sA_T.$  Then the vector $e_0$ is a cyclic and separating vector for $\ti{\sA}_T.$  That it is cyclic is clear, for if
$v$ is any finite linear combination of basis vectors $v = \sum_{n=0}^N c_n e_n,$ let $p$ be the polynomial $p(z) = \sum_{n=0}^N \frac{c_n}{a_0 \cdots a_{n-1}}z^n$
(where the empty product is defined to be $1$), then $p(T)e_0 = v.$  

That $e_0$ is separating is also straightforward.  First note that if $S \in \ti{\sA}_T,$ there is a sequence of polynomials $\{p_n\}$ with $\{p_n(T)\}$ converging to $S$
in the norm of $\ti{\sA}_T,$ so that by definition of the norm, $p_n(T)v \to Sv$ for every $v \in H$ and in particular for $v = e_0.$  By Proposition~\ref{p: reconstruction} these
polynomials can be taken to be convex combinations of Fejer polynomials, so that $\hat{p}_n(k) \to \hat{S}(k)$ for every $k = 0, 1, \dots.$  So if $S \neq 0,$ there is some $k$ with
$\hat{S}(k) \neq 0,$ and so 
$Se_0 = \sum_{j=0}^{\infty} \hat{S}(j)a_0 \cdots a_{j-1}e_j \neq 0.$

\end{remark}

It is natural to ask for a description of the extreme points of the unit ball of $\sA_T.$  While that seems out of reach in our context, a sufficient condition is at hand.
\begin{proposition} \label{p: unit ball}
Let the weighteds of $T$ satisfy $|a_1| \geq |a_2| \geq \cdots $ and let $\sB = \{ S \in \sA_T: ||S|| \leq 1\}$ be the closed unit ball.  Then
$S \in \sB$ is an extreme point of $\sB$ if $||S|| = ||Se_0||_2 = 1.$  In particular, the elements $T_n := \frac{1}{||T^n||} T^n $ are extreme points of $\sB.$
\end{proposition}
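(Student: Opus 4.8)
The plan is to use the standard characterization of extreme points via the Hilbert space geometry coming from the cyclic and separating vector $e_0$. Suppose $\|S\| = \|Se_0\|_2 = 1$ and write $S = \tfrac12(S_1 + S_2)$ with $S_1, S_2 \in \sB$. Since $\|S_i\| \le 1$ we have $\|S_i e_0\|_2 \le \|S_i\| \le 1$, and $Se_0 = \tfrac12(S_1 e_0 + S_2 e_0)$ is a unit vector in $H$ expressed as the midpoint of two vectors in the closed unit ball of $H$. Because the closed unit ball of a Hilbert space is strictly convex, this forces $S_1 e_0 = S_2 e_0 = Se_0$. Then I invoke Remark~\ref{r: cyclic and separating vector}: $e_0$ is a separating vector for $\ti{\sA}_T$, hence for $\sA_T$, so $S_1 = S_2 = S$. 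Therefore $S$ is an extreme point of $\sB$.

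For the second assertion, I need to verify that $T_n = \tfrac{1}{\|T^n\|} T^n$ satisfies the hypothesis, i.e.\ that $\|T^n\| = \|T^n e_0\|_2$. Here the hypothesis $|a_1| \ge |a_2| \ge \cdots$ (together with $\|T\| = \sup_k |a_k|$, and presumably $|a_0|$ being at least as large, or more precisely the relevant product being maximized at the start) enters. Concretely, $T^n e_j = (a_j a_{j+1} \cdots a_{j+n-1}) e_{j+n}$, so $T^n$ is a weighted shift by $n$ with weights $b_j^{(n)} := a_j a_{j+1} \cdots a_{j+n-1}$, and $\|T^n\| = \sup_j |b_j^{(n)}|$. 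The monotonicity of the $|a_k|$ for $k \ge 1$ makes the products $|b_j^{(n)}|$ decreasing in $j$ over the range $j \ge 1$; one must also check $j = 0$, and indeed since $a_0$ appears in $b_0^{(n)} = a_0 a_1 \cdots a_{n-1}$ and the remaining factors $a_1, \dots, a_{n-1}$ are the largest available, $|b_0^{(n)}| = \|T^n\|$ provided $|a_0| \ge |a_n|$; I would make this precise (the cleanest sufficient condition being $|a_0| \ge |a_1| \ge |a_2| \ge \cdots$, which is likely what is intended, or I would note that the stated hypothesis already gives $|b_0^{(n)}| \ge |b_j^{(n)}|$ for all $j \ge 1$ and handle $j=0$ versus $j\ge 1$ comparison directly). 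Granting this, $\|T^n e_0\|_2 = |b_0^{(n)}| = \|T^n\|$, so $\|T_n\| = \|T_n e_0\|_2 = 1$ and the first part applies.

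The main obstacle I anticipate is precisely the bookkeeping in the second assertion: making sure the stated weight hypothesis $|a_1| \ge |a_2| \ge \cdots$ genuinely implies $\|T^n\|$ is attained on $e_0$. The subtlety is the role of $a_0$, which is not controlled by the displayed inequalities. I would resolve this either by reading the hypothesis as intended to include $a_0$ at the top of the chain, or by observing that for $j \ge 1$ the product $|a_j \cdots a_{j+n-1}| \le |a_1 \cdots a_n| \le |a_1 \cdots a_{n-1}| \cdot |a_n|$ while $|b_0^{(n)}| = |a_0| \cdot |a_1 \cdots a_{n-1}|$, so the comparison reduces to $|a_0|$ versus $|a_n|$; in any reasonable reading of the hypothesis this holds. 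The first part of the proof is essentially immediate once one recognizes it as an application of strict convexity of the Hilbert ball plus the separating vector property, so no real difficulty there.

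\begin{proof}
Suppose $S \in \sB$ satisfies $\|S\| = \|Se_0\|_2 = 1$, and write $S = \tfrac{1}{2}(S_1 + S_2)$ with $S_1, S_2 \in \sB$. Then $\|S_i e_0\|_2 \leq \|S_i\| \leq 1$ for $i = 1, 2$, and the unit vector $Se_0 = \tfrac{1}{2}(S_1 e_0 + S_2 e_0)$ is the midpoint of two vectors in the closed unit ball of $H$. Since the closed unit ball of a Hilbert space is strictly convex, $S_1 e_0 = S_2 e_0 = Se_0$. By Remark~\ref{r: cyclic and separating vector}, $e_0$ is a separating vector for $\ti{\sA}_T$, hence for $\sA_T$; applying this to $S_1 - S$ and $S_2 - S$ gives $S_1 = S_2 = S$. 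Thus $S$ is an extreme point of $\sB$.

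For the final claim, note that $T^n e_j = (a_j a_{j+1} \cdots a_{j+n-1}) e_{j+n}$, so $T^n$ is a weighted shift with weights $b_j^{(n)} = a_j a_{j+1} \cdots a_{j+n-1}$ and $\|T^n\| = \sup_{j \geq 0} |b_j^{(n)}|$. For $j \geq 1$, the monotonicity $|a_1| \geq |a_2| \geq \cdots$ gives $|b_j^{(n)}| \leq |a_1 a_2 \cdots a_n| \leq |a_1 \cdots a_{n-1}| \cdot |a_0| = |b_0^{(n)}|$, where the last inequality uses $|a_0| \geq |a_n|$. Hence $\|T^n\| = |b_0^{(n)}| = \|T^n e_0\|_2$, so $\|T_n\| = \|T_n e_0\|_2 = 1$, and by the first part $T_n$ is an extreme point of $\sB$.
\end{proof}
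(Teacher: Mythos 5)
Your proof is correct and follows essentially the same route as the paper: the contractive map $S \mapsto Se_0$ sends the unit ball of $\sA_T$ into the Hilbert ball, where unit vectors are extreme, and the separating property of $e_0$ (which you rightly make explicit, where the paper only says ``a fortiori'') pulls extremality back to $\sA_T$. Your observation about $|a_0|$ is a legitimate catch --- the stated hypothesis $|a_1| \geq |a_2| \geq \cdots$ does not control $a_0$, but the paper's Remark~\ref{r: inequivalent norms} restates the condition as $|a_0| \geq |a_1| \geq \cdots$, confirming that your reading (with $a_0$ at the top of the chain) is the intended one.
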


\begin{proof} As noted in Remark~\ref{r: cyclic and separating vector}, the map $S \in \sA_T \mapsto ||Se_0||_2 $ is a norm on $\sA_T,$ satisfying
$||Se_0||_2 \leq  ||S||,$ and thus the map $S \mapsto Se_0$ maps the unit ball of $\sA_T$ into the unit ball of the Hilbert space $H.$ Since every unit vector
in Hilbert space is an extreme point of the unit ball in $H,$ it follows that if $||S|| = ||Se_0||_2 = 1,$ then $Se_0$ is extreme in the unit ball of $H,$
and \textit{ a fortori} $S$ is extreme in the unit ball of $\sA_T.$ In particular, the condition on the weights $|a_1| \geq |a_2| \geq \cdots$ guarantees
that the monomials $T^n$ assume their norm at $e_0,$ hence the normalized monomials $T_n$ are extreme points of the unit ball of $\sA_T.$

\end{proof}

\begin{remark} \label{r: inequivalent norms}
If the weights satisfy $|a_0| \geq |a_1| \geq |a_2| \geq \cdots $ then, as noted in the proof of Proposition~\ref{p: unit ball}, the norms
$||T^n||, \  ||T^ne_0||_2$ coincide. However, it need not be the case that the two norms coincide, or even are equivalent, on the operator algebra $\sA_T.$

To see this, let the weights satisfy $a_0 = a_1 = \dots = 1,$ and take
 $p_n(T) = T + T^2 + \cdots T^n \ (n \in \bbN),$ and let $v_n = \frac{1}{\sqrt{n}}(e_0 + e_1 + \cdots e_{n-1}).$
One calculates that
\[ ||p_n(T) v_n||_2 = \frac{\sqrt{2n^2 + 1}}{\sqrt{3}} \text{ while } ||p_n(T)e_0||_2 = \sqrt{n}. \]
Thus,
\[ \frac{||p_n(T)||}{||p_n(T)e_0||_2}  \geq \frac{\sqrt{2}}{\sqrt{3}}\,\sqrt{n} \]
so the norms are inequivalent on $\sA_T.$

\end{remark}

While the operator norm is not in general equivalent to the norm $S \mapsto ||Se_0||_2$ on $\sA_T,$ under certain restrictions the two norms are equivalent.

\begin{proposition} \label{p: equivalent norms}
Let $\{a_n\}_{n\geq 0}$ be a  sequence satisfying $|a_0| \geq |a_1| \geq |a_2| \geq \cdots$ with $ \sum_{n=0}^{\infty} |a_n|^2 := M^2 < \infty,$ and $a_n \neq 0$ for all $n.$
Then the operator norm on $\sA_T$ is equivalent to the norm $S \mapsto ||Se_0||_2.$
\end{proposition}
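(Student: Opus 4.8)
The plan is to prove only the one nontrivial inequality. Since $||Se_0||_2 \le ||S||$ holds for every $S \in \sA_T$ (the map $S \mapsto Se_0$ carries the operator unit ball into the unit ball of $H$, as observed in Remark~\ref{r: cyclic and separating vector}), it suffices to produce a constant $C$, independent of $S$, with $||S|| \le C\,||Se_0||_2$. I will route the estimate through the Hilbert--Schmidt norm $||\cdot||_{HS}$, which has the virtue of decoupling the Fourier modes $T^n$, and in fact obtain $C = M/|a_0|$.

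The heart of the argument is a single estimate on the monomials. From $T^n e_k = (a_k a_{k+1}\cdots a_{k+n-1})e_{k+n}$ one reads off $||T^n||_{HS}^2 = \sum_{k\ge 0}\prod_{i=k}^{k+n-1}|a_i|^2$ and $||T^n e_0||_2^2 = \prod_{i=0}^{n-1}|a_i|^2$. In the $k$-th summand I factor out $|a_k|^2$ and invoke the monotonicity $|a_{k+j}| \le |a_j|$ to bound the remaining product $\prod_{i=k+1}^{k+n-1}|a_i|^2 \le \prod_{j=1}^{n-1}|a_j|^2$; summing over $k$ and using $\sum_{k\ge 0}|a_k|^2 = M^2$ gives
\[ ||T^n||_{HS}^2 \;\le\; \frac{M^2}{|a_0|^2}\,||T^n e_0||_2^2, \]
with the constant $M^2/|a_0|^2$ independent of $n$.

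Next I pass from monomials to polynomials. If $q(z) = \sum_{n=1}^N c_n z^n$, then for each fixed $k$ the vectors $\{T^n e_k\}_{n=1}^N$ are pairwise orthogonal, so $||q(T)||_{HS}^2 = \sum_{n=1}^N |c_n|^2\,||T^n||_{HS}^2$, and likewise (or directly from Lemma~\ref{l: partial sums}) $||q(T)e_0||_2^2 = \sum_{n=1}^N |c_n|^2\,||T^n e_0||_2^2$. Combining these with the monomial estimate yields $||q(T)||_{HS} \le (M/|a_0|)\,||q(T)e_0||_2 \le (M/|a_0|)\,||q(T)||$ for every polynomial $q$ with $q(0)=0$.

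Finally I upgrade to arbitrary $S \in \sA_T$ by a Cauchy-sequence argument: choose polynomials $q_m$ with $q_m(0)=0$ and $q_m(T) \to S$ in operator norm, apply the polynomial estimate to the differences $q_m - q_\ell$ to see that $\{q_m(T)\}$ is Cauchy in $||\cdot||_{HS}$, and note that since $||\cdot|| \le ||\cdot||_{HS}$ the Hilbert--Schmidt limit must coincide with $S$; hence $S$ is itself Hilbert--Schmidt with $||S||_{HS} = \lim_m ||q_m(T)||_{HS}$, and letting $m\to\infty$ in $||q_m(T)||_{HS} \le (M/|a_0|)\,||q_m(T)e_0||_2$ (using $||q_m(T)e_0||_2 \to ||Se_0||_2$) gives $||S|| \le ||S||_{HS} \le (M/|a_0|)\,||Se_0||_2$. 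Together with the trivial bound this is the asserted equivalence in the explicit form $||Se_0||_2 \le ||S|| \le (M/|a_0|)\,||Se_0||_2$. I expect the only real content to lie in the monomial estimate of the second paragraph — specifically the monotonicity bound that lets the sum over $k$ collapse to $M^2$ with a constant independent of $n$; once the Hilbert--Schmidt framework is in place, the passage to polynomials and then to norm limits is routine.
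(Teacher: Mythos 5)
Your proof is correct, but it takes a genuinely different route from the paper's. The paper works directly with the operator norm: it first shows $||p(T)e_k||_2 \leq \frac{|a_k|}{|a_0|}\,||p(T)e_0||_2$ for each basis vector $e_k$ (using the same monotonicity trick $|a_{k+j}| \leq |a_j|$ that you use), and then, for an arbitrary finitely supported unit vector $v = \sum_\ell \be_\ell e_\ell$, bounds $||p(T)v||_2$ by the triangle inequality over the $e_\ell$ followed by Cauchy--Schwarz against the square-summable sequence $\{a_\ell\}$, arriving at $||p(T)|| \leq \frac{M}{|a_0|}\,||p(T)e_0||_2$ and passing to norm limits. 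You instead interpose the Hilbert--Schmidt norm: the identity $||q(T)||_{HS}^2 = \sum_n |c_n|^2\,||T^n||_{HS}^2$ (valid because $T^n e_k$ is a multiple of $e_{k+n}$) reduces everything to the monomial estimate $||T^n||_{HS}^2 \leq \frac{M^2}{|a_0|^2}\,||T^n e_0||_2^2$, which is exactly the paper's per-basis-vector bound summed in $\ell^2$ over $k$ rather than aggregated via Cauchy--Schwarz. Both arguments hinge on the same two ingredients (monotone weights and $\sum |a_n|^2 = M^2$) and produce the identical constant $M/|a_0|$. The paper's version is more self-contained, needing nothing beyond Cauchy--Schwarz; yours requires the standard facts that $||\cdot|| \leq ||\cdot||_{HS}$ and that the Hilbert--Schmidt class is complete, but in exchange it proves slightly more, namely that every $S \in \sA_T$ is a Hilbert--Schmidt operator and that all three norms $||Se_0||_2 \leq ||S|| \leq ||S||_{HS} \leq \frac{M}{|a_0|}\,||Se_0||_2$ are mutually equivalent on $\sA_T$. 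Your limiting argument (Cauchy in $||\cdot||_{HS}$ because Cauchy in operator norm, and identification of the two limits since the operator norm is the weaker one) is carried out correctly.
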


\begin{proof}
Let $p(z) = \sum_{j=1}^r c_j z^j.$  Then
\begin{align*}
||p(T)e_k||_2^2 &= \sum_{j=1}^r |c_j|^2 \, |a_k a_{k+1} \cdots a_{k+j-1}|^2 \\
	&\leq \sum_{j=1}^r |c_j|^2 \, (\frac{|a_k|}{|a_0|})^2 \, |a_0 a_1 \cdots a_{j-1}|^2 \\
	&\leq (\frac{|a_k|}{|a_0|})^2 ||p(T)e_0||^2_2
\end{align*}

Now let $v$ be a unit vector which is a finite linear combination of basis vectors, so $v = \sum_{\ell = 0}^N \be_{\ell} e_{\ell}$ with $\sum_{\ell = 0}^N |\be_{\ell}|^2 = 1.$ Thus
\begin{align*}
||p(T)v||_2  &\leq \sum_{\ell = 0}^N |\be_{\ell}| ||p(T)e_{\ell}||_2 \\
	&\leq \sum_{\ell = 0}^N |\be_{\ell}| \frac{|a_{\ell}|}{|a_0|} ||p(T)e_0||_2 \\
	&\leq \frac{1}{|a_0|}||p(T)e_0||_2 (\sum_{\ell = 0}^N |\be_{\ell}| \, |a_{\ell}| ) \\
	&\leq \frac{1}{|a_0|}||p(T)e_0||_2 (\sum_{\ell = 0}^N |\be_{\ell}|^2)^{\frac{1}{2}})(\sum_{k= 0}^N |a_k|^2)^{\frac{1}{2}}  \\
	&\leq  \frac{1}{|a_0|}||p(T)e_0||_2 (\sum_{\ell = 0}^N |\be_{\ell}|^2)^{\frac{1}{2}})(\sum_{k= 0}^{\infty} |a_k|^2)^{\frac{1}{2}}  \\
	&\leq \frac{M}{|a_0|} ||p(T)e_0||_2
\end{align*}

Now since $||p(T)v||_2 \leq \frac{M}{|a_0|} ||p(T)e_0||_2$ for a dense set of unit vectors $v,$ it follows that $||p(T)|| \leq \frac{M}{|a_0|} ||p(T)e_0||_2.$
Finally, since  we can approximate an arbitrary $S \in \sA_T$ by polynomials in $T$, so we conclude that
$||S|| \leq \frac{M}{|a_0|} ||Se_0||_2$ for all $S \in \sA_T.$
\end{proof}

As a result of the equivalence of the two norms, several results follow immediately. 

\begin{corollary} \label{c: partial sums}
Let the weighted shift $T$ be as in Proposition~\ref{p: equivalent norms}.  Then for $S \in \sA_T,$ the partial sums of the Fourier series,
\[ \sum_{k=1}^n \hat{S}(k) T^k \]
converge in norm to $S.$
\end{corollary}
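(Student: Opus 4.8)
The plan is to deduce everything from the norm equivalence in Proposition~\ref{p: equivalent norms}: since the operator norm on $\sA_T$ is dominated by a fixed multiple of the norm $S \mapsto \|Se_0\|_2$, it suffices to show that the Fourier partial sums converge to $S$ in the latter norm, that is, that $\bigl(\sum_{k=1}^n \hat S(k) T^k\bigr)e_0 \to Se_0$ in $H$ as $n \to \infty$.

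To see this I would write $Se_0 = \sum_{n=1}^{\infty} c_n e_n$, which by Lemma~\ref{l: partial sums} has $c_n = \hat S(n)\, a_0 a_1 \cdots a_{n-1}$. Since $T^k e_0 = a_0 a_1 \cdots a_{k-1}\, e_k$, this gives $\hat S(k) T^k e_0 = c_k e_k$ for every $k$, so the $n$-th Fourier partial sum applied to $e_0$ is precisely $\sum_{k=1}^n c_k e_k$, the $n$-th partial sum of the orthonormal-basis expansion of the vector $Se_0 \in H$. These partial sums converge to $Se_0$ in $H$ by definition of the basis expansion, so $\bigl(S - \sum_{k=1}^n \hat S(k) T^k\bigr)e_0 = Se_0 - \sum_{k=1}^n c_k e_k \to 0$.

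Finally I would note that each partial sum $\sum_{k=1}^n \hat S(k) T^k$ is a polynomial in $T$ with no constant term, hence lies in $\sA_T$, and so does its difference with $S$; applying the estimate $\|R\| \le \tfrac{M}{|a_0|}\|Re_0\|_2$ from Proposition~\ref{p: equivalent norms} to $R = S - \sum_{k=1}^n \hat S(k) T^k$ yields
\[
\Bigl\| S - \sum_{k=1}^n \hat S(k) T^k \Bigr\| \le \frac{M}{|a_0|}\Bigl\| Se_0 - \sum_{k=1}^n c_k e_k \Bigr\|_2 \longrightarrow 0,
\]
which is the assertion.

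There is no serious obstacle here: once Proposition~\ref{p: equivalent norms} is in hand, the corollary is essentially immediate. The only points requiring any care are the elementary identification $\hat S(k) T^k e_0 = c_k e_k$ coming from Lemma~\ref{l: partial sums}, and the (trivial) observation that the Fourier partial sums genuinely sit inside $\sA_T$, so that the norm-equivalence estimate applies to the difference $S - \sum_{k=1}^n \hat S(k) T^k$.
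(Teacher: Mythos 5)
Your argument is correct and is exactly the route the paper intends: the corollary is stated as an immediate consequence of Proposition~\ref{p: equivalent norms}, and your chain --- identify $\bigl(\sum_{k=1}^n \hat S(k)T^k\bigr)e_0$ with the $n$-th partial sum of the basis expansion of $Se_0$ via Lemma~\ref{l: partial sums}, then transfer the Hilbert-space convergence to operator-norm convergence using $\|R\| \le \tfrac{M}{|a_0|}\|Re_0\|_2$ --- is precisely the "immediate" deduction the paper leaves to the reader.
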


\begin{corollary} \label{c: converge}
Let the weighted shift $T$ be as in Proposition~\ref{p: equivalent norms}.  Then the sequence 
\[ p_n(T) := \sum_{k=1}^n c_k T^k \]
converges in norm to an element $S \in \sA_T$ if and only if 
\[ \sum_{k=1}^{\infty} |c_k|^2 \, |a_0 \cdots a_{k-1}|^2 < \infty \]
\end{corollary}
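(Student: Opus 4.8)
The plan is to transfer everything to the Hilbert space $H$ via the cyclic vector $e_0$ and then invoke the norm equivalence of Proposition~\ref{p: equivalent norms}. First I would record the elementary identity: since $T^k e_0 = (a_0 a_1 \cdots a_{k-1}) e_k$, the partial sum acts on $e_0$ by
\[ p_n(T) e_0 = \sum_{k=1}^n c_k (a_0 a_1 \cdots a_{k-1}) e_k , \]
and, because $\{e_k\}$ is orthonormal, for $m > n$ we get
\[ \| (p_m(T) - p_n(T)) e_0 \|_2^2 = \sum_{k=n+1}^m |c_k|^2 \, |a_0 \cdots a_{k-1}|^2 . \]

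For the ``if'' direction, assume $\sum_{k=1}^\infty |c_k|^2 |a_0 \cdots a_{k-1}|^2 < \infty$. Then the displayed tail sums tend to $0$, so $\{p_n(T) e_0\}$ is Cauchy in $H$. Each difference $p_m(T) - p_n(T)$ is a polynomial in $T$ vanishing at the origin, so Proposition~\ref{p: equivalent norms} applies directly to it, giving $\|p_m(T) - p_n(T)\| \le \frac{M}{|a_0|} \|(p_m(T) - p_n(T)) e_0\|_2 \to 0$; hence $\{p_n(T)\}$ is Cauchy in the operator norm. Since $\sA_T$ is a closed subalgebra of $\sB(H)$, it is complete, so $p_n(T)$ converges in norm to some $S \in \sA_T$. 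For the ``only if'' direction, suppose $p_n(T) \to S$ in the operator norm. Then a fortiori $p_n(T) e_0 \to S e_0$ in $H$ (using the trivial bound $\|\,\cdot\, e_0\|_2 \le \|\cdot\|$), so the orthogonal series $\sum_k c_k (a_0 \cdots a_{k-1}) e_k$ converges in $H$, which forces $\sum_{k=1}^\infty |c_k|^2 |a_0 \cdots a_{k-1}|^2 < \infty$.

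There is essentially no obstacle here: all the content sits in Proposition~\ref{p: equivalent norms}, and the remainder is the completeness of $H$ together with the completeness of $\sA_T$. The only points worth a remark are that the Cauchy estimate is applied to the polynomials $p_m(T) - p_n(T)$ themselves, so one need not invoke the norm equivalence for general elements of $\sA_T$, and that membership of the limit $S$ in $\sA_T$ is automatic from closedness of $\sA_T$ in $\sB(H)$.
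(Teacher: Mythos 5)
Your proof is correct and is exactly the argument the paper intends: the paper states this corollary without proof, remarking only that it ``follows immediately'' from the equivalence of the operator norm and the norm $S \mapsto \|Se_0\|_2$, and your write-up simply makes that transfer explicit (Cauchy in $H$ via orthonormality, Cauchy in operator norm via Proposition~\ref{p: equivalent norms} applied to the polynomial differences, and the trivial reverse inequality for the converse). Nothing further is needed.
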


Proposition~\ref{p: equivalent norms} not only tells us that the operator norm is equivalent to a Hilbert space norm, but gives a mapping
\[ \sF: \sA_T \to H, \ S \mapsto Se_0  \] 
which maps $\sA_T$ onto the closed subspace $H_1 $ spanned by the basis vectors $e_n : \ n \geq 1.$ One can also define $\ti{\sF}: \ti{\sA}_T \to H $ by $S \mapsto Se_0.$
It is easy to see how to adapt Proposition~\ref{p: equivalent norms} to the unital algebra $\ti{\sA}_T.$
Note that the unital algebra $\ti{\sA}_T$ maps onto $H.$ Since $\sF$ is a Banach space isomorphism, it gives a one-to-one map of closed subspaces of $\sA_T$ to
closed subspaces of $H_1,$ and similarly $\ti{\sF}$ maps closed subspaces of $\ti{\sA}_T$ onto closed subspaces of $H.$ Furthermore
\begin{proposition} \label{p: ideals and subspaces}
Suppose the weights of $T$ satisfy the conditions of Proposition~\ref{p: equivalent norms}.
\begin{enumerate}
\item The map ${\sF}$ is an isomorphism of the lattice of closed ideals of ${\sA}_T$ onto the lattice of closed $T$-invariant subspaces of $H_1.$
\item The map $\ti{\sF}$ is an isomorphism of the lattice of closed ideals of $\ti{\sA}_T$ onto the lattice of closed $T$-invariant subspaces of $H.$
\end{enumerate}
\end{proposition}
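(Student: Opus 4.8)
The plan is to exploit the Banach space isomorphism $\sF\colon\sA_T\to H_1$ (respectively $\ti\sF\colon\ti\sA_T\to H$) given by $S\mapsto Se_0$ and show it intertwines the two lattice structures. Since $\sF$ is already known to be a bounded bijection with bounded inverse (Proposition~\ref{p: equivalent norms}), it automatically carries closed subspaces to closed subspaces in both directions, so the only thing to verify is that, under $\sF$, \emph{ideals} correspond exactly to \emph{$T$-invariant subspaces}. I will do the unital case (2) in detail and remark that (1) is the same argument with $\sA_T$ in place of $\ti\sA_T$ and $H_1$ in place of $H$.

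First I would observe the key compatibility: for $S\in\ti\sA_T$ one has $\sF(TS)=TSe_0=T(\sF(S))$, because $T$ commutes with everything in $\ti\sA_T$ (the algebra is commutative, as $T$ generates it) — more directly, $TSe_0 = T\cdot Se_0$ is literally the operator $T$ applied to the vector $Se_0$. Thus left multiplication by $T$ on the algebra side is transported by $\sF$ to the operator $T$ acting on the vector side. Iterating, $\sF(p(T)S) = p(T)\sF(S)$ for every polynomial $p$, and then by the norm equivalence and continuity of $\sF$, $\sF(RS) = $ [the limit of $p_n(T)\sF(S)$] where $p_n(T)\to R$; since $\sF$ is a topological isomorphism this limit is well-defined and I will call it the natural module action. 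In particular, if $\sJ\subset\ti\sA_T$ is a closed ideal, then $T\cdot\sF(\sJ) = \sF(T\sJ)\subset\sF(\sJ)$, so $\sF(\sJ)$ is a closed $T$-invariant subspace of $H$.

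Conversely, given a closed $T$-invariant subspace $V\subset H$, set $\sJ = \sF^{-1}(V)$, a closed subspace of $\ti\sA_T$; I must check it is an ideal. Take $S\in\sJ$ and $R\in\ti\sA_T$; approximate $R$ in operator norm by polynomials $p_n(z)$ (with, in the unital case, constant terms allowed). Then $\sF(RS) = RSe_0 = \lim_n p_n(T)Se_0 = \lim_n p_n(T)(\sF(S))$. Since $\sF(S)\in V$ and $V$ is $T$-invariant and closed, each $p_n(T)\sF(S)\in V$ and hence the limit $\sF(RS)\in V$, i.e. $RS\in\sJ$. So $\sJ$ is an ideal. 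Because $\sF$ is a bijection, the assignments $\sJ\mapsto\sF(\sJ)$ and $V\mapsto\sF^{-1}(V)$ are mutually inverse; they are clearly inclusion-preserving, hence lattice isomorphisms (intersections and closed spans are preserved by any bounded linear isomorphism and its inverse). For part (1) the identical reasoning applies: $\sF$ maps $\sA_T$ onto $H_1$, $T$-invariance of a subspace of $H_1$ makes sense since $TH_1\subset H_1$, and closed ideals of $\sA_T$ (no constant terms now) correspond to closed $T$-invariant subspaces of $H_1$.

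The main obstacle — really the only point needing care — is the direction that a closed $T$-invariant subspace $V$ pulls back to an \emph{ideal}: one needs the module action $R\cdot v$ to land back in $V$ for \emph{all} $R\in\ti\sA_T$, not just polynomials, and this requires knowing that $R\mapsto R v$ (equivalently $R\mapsto R\sF(S)$) is continuous from the operator norm into $H$, which is exactly what the norm equivalence of Proposition~\ref{p: equivalent norms} provides (the operator norm dominates $\|R\sF(S)\| \le \|R\|\,\|\sF(S)\|$ trivially, and it is the reverse inequality packaged in $\sF^{-1}$ being bounded that makes $\sF(\sJ)$ closed). I would also note briefly that the empty-product convention and the distinction between $H$ and $H_1$ is the only bookkeeping difference between the two parts, and that commutativity of $\sA_T$ is what lets us speak of two-sided ideals without fuss.
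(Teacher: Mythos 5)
Your proposal is correct and follows essentially the same route as the paper: both arguments rest on the $T$-equivariance $\sF(TS)=T\sF(S)$, the fact that the norm equivalence of Proposition~\ref{p: equivalent norms} makes $\sF$ a topological isomorphism carrying closed subspaces to closed subspaces in both directions, and polynomial approximation to upgrade $T$-invariance to invariance under multiplication by arbitrary elements of the algebra. The only cosmetic difference is that the paper performs the ``$T$-invariant closed subspace $=$ closed ideal'' step on the algebra side before transporting via $\sF$, whereas you verify the ideal property directly on the pullback $\sF^{-1}(V)$; these are the same argument.
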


\begin{proof}
We prove only the second statement. 

 Let us first observe that the map $\ti{\sF}$ is $T$-equivariant.  That is, $T\ti{\sF}(S) = \ti{\sF}(TS), \ S \in \ti{\sA}_T.$ Indeed, this follows
immediately from the definition of $\ti{\sF}.$ We claim that a closed subspace $\sI \subset \ti{\sA}_T$ is a closed ideal if and only if it is $T$-invariant. Clearly, if $\sI$ is
an ideal in $\ti{\sA}_T,$ then it is $T$-invariant.  On the other hand, if a closed subspace $\sI \subset \ti{\sA}_T$ is $T$-invariant, then it is invariant under
multiplication by any polynomial in $T.$  Let $S \in \sI$ and $R \in \ti{\sA}_T.$ If $\{p_n\}$ is a sequence of polynomials such that $\{p_n(T)\}$  converges in norm to $R,$
then $p_n(T)S$ converges in norm to $RS.$  Thus, $\sI$ is a closed ideal.

Now clearly the map $\ti{\sF}$ maps closed subspaces of $\ti{\sA}_T$ to closed subspaces of $H,$ and since $\ti{\sF}$ is $T$-equivariant, it is an isomorphism of the lattice of closed
$T$-invariant subpaces of $\ti{\sA}_T$ onto the lattice of closed $T$-invariant subpaces of $H.$ But as shown above, the closed  $T$-invariant subspaces of $\ti{\sA}_T$
are exactly the closed ideals.
\end{proof}

We know from Proposition~\ref{p: gauge invariant ideal} that the gauge invariant ideals in the algebra $\sA_T$ generated by a unilateral weighted shift $T$ are all of the form
$<T^k>.$ Given an element $S \in \sA_T,$ one can ask when the ideal $<S>$ is of the form $<T^k>$ for some $k \in \bbN.$ 

\begin{corollary} \label{c: more ideals}
Let the weighted shift $T$ be as in Proposition~\ref{p: equivalent norms}, and let $S \in \sA_T.$ Suppose $\hat{S}(j) = 0, \ j = 1, \dots, k-1$ and $\hat{S}(k) \neq 0$ for some $k > 1.$
Then the closed ideal ${<}S{>} = {<}T^k{>}$ if and only the closed subspace generated by the vectors $Se_0, \ TSe_0, \ T^2 Se_0, \dots $ contains the basis vector $e_k.$
\end{corollary}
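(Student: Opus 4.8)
The plan is to exploit the lattice isomorphism $\sF$ from Proposition~\ref{p: ideals and subspaces}(1), which carries the closed ideal $\langle S\rangle$ in $\sA_T$ to the closed $T$-invariant subspace of $H_1$ generated by $Se_0$ — that is, to the closure of $\operatorname{span}\{Se_0, TSe_0, T^2Se_0,\dots\}$. Call this subspace $\sM$. Under the same isomorphism, $\langle T^k\rangle$ corresponds to the closed $T$-invariant subspace generated by $T^k e_0 = a_0a_1\cdots a_{k-1}\,e_k$, which is simply $\overline{\operatorname{span}}\{e_k, e_{k+1}, e_{k+2},\dots\} =: H_k$, since $T$ applied repeatedly to $e_k$ runs through all higher basis vectors (all weights nonzero). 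So the assertion $\langle S\rangle = \langle T^k\rangle$ is equivalent, via $\sF$, to the statement $\sM = H_k$.

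First I would establish the inclusion $\sM \subseteq H_k$ that always holds under the hypothesis $\hat S(j)=0$ for $j<k$. By Lemma~\ref{l: partial sums}, $Se_0 = \sum_{n\ge k}\hat S(n)a_0\cdots a_{n-1}e_n \in H_k$; since $H_k$ is $T$-invariant and closed, $\sM\subseteq H_k$. Thus one direction is immediate: if $\sM = H_k$ then certainly $e_k\in\sM$. The substance is the converse. Assume $e_k\in\sM$. I want to conclude $H_k\subseteq\sM$, which combined with $\sM\subseteq H_k$ gives equality. Since $\sM$ is $T$-invariant and closed and contains $e_k$, it contains $Te_k = a_k e_{k+1}$, hence $e_{k+1}$ (as $a_k\ne 0$), and inductively $e_n$ for all $n\ge k$; taking the closed span gives $H_k\subseteq\sM$. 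Then translating back through $\sF^{-1}$ yields $\langle S\rangle = \langle T^k\rangle$.

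The one point that needs care — and is the main obstacle, though a mild one — is making sure the dictionary between ideals and invariant subspaces is applied consistently: specifically that the \emph{closed ideal generated by} $S$ corresponds under $\sF$ to the \emph{closed invariant subspace generated by} $Se_0$, and likewise for $T^k$. This follows from Proposition~\ref{p: ideals and subspaces}(1) because $\sF$ is a lattice isomorphism that is $T$-equivariant (so it takes the smallest closed ideal containing $S$ to the smallest closed $T$-invariant subspace containing $\sF(S) = Se_0$), but I would state it explicitly: $\sF(\langle S\rangle) = \overline{\operatorname{span}}\{T^jSe_0 : j\ge 0\} = \sM$ and $\sF(\langle T^k\rangle) = H_k$. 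I should also note that the hypotheses $\hat S(j)=0$ for $j<k$, $\hat S(k)\ne 0$ are exactly what guarantees $Se_0\in H_k\setminus H_{k+1}$, so the candidate value of $k$ is forced and the statement is not vacuous; but these hypotheses are not otherwise needed for the equivalence itself once $\sM\subseteq H_k$ is known. With these identifications in place the proof reduces to the two short containment arguments above.

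I would write the argument in roughly four short steps: (i) invoke $\sF$ to reduce to subspaces; (ii) compute $\sF(\langle T^k\rangle) = H_k$ using that all weights are nonzero; (iii) show $\sM\subseteq H_k$ from Lemma~\ref{l: partial sums}; (iv) show $e_k\in\sM \Rightarrow H_k\subseteq\sM$ by $T$-invariance, and conclude. No delicate estimates are involved — the work done earlier (equivalence of norms, the lattice isomorphism, the formula for $Se_0$) carries all the analytic weight.
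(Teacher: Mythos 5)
Your proposal is correct and follows essentially the same route as the paper: both arguments reduce the ideal statement to a subspace statement via the lattice isomorphism $\sF$ of Proposition~\ref{p: ideals and subspaces}, identify $\sF({<}T^k{>})$ with $\overline{\operatorname{span}}\{e_k,e_{k+1},\dots\},$ and finish by observing that once $e_k$ lies in the invariant subspace generated by $Se_0$ the nonzero weights force all higher basis vectors in as well. The only (immaterial) difference is that you obtain the containment $\sM\subseteq H_k$ directly from the formula of Lemma~\ref{l: partial sums}, whereas the paper first proves ${<}S{>}\subset{<}T^k{>}$ by approximating $S$ with polynomials lying in ${<}T^k{>}$ and then applies $\sF.$
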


\begin{proof}
The condition $\hat{S}(j) = 0$ for $ j = 1, \dots, k-1$ implies that ${<}S{>} \subset {<}T^k{>}.$  Indeed, there is a sequence of polynomials $\{p_n\} \subset {<}T^k{>}$ converging in
norm to $S,$ hence $S \in {<}T^k{>},$ and so the closed ideal ${<}S{>} \subset {<}T^k{>}.$  Applying the map $\sF : \sA_T \to H,$ it follows that $Se_0$ is contained in the closed
invariant subspace generated by the vector $e_k.$

By Proposition~\ref{p: ideals and subspaces}, in order for the two ideals to coincide, the corresponding subspaces under the map $\sF$ must coincide. Thus it is necesessary and 
sufficient that the closed subspace generated by the vectors $Se_0, \ TSe_0, \ T^2Se_0, \dots$ equal the closed subspace generated by $e_k, \ Te_k \ T^2e_k , \dots,$
which is the subspace generated by the basis vectors $e_k, \ e_{k+1}, \ e_{k+2}, \dots.$  Thus, if the closed subspace generated by the vectors $Se_0, \ TSe_0, \ T^2Se_0, \dots$
contains the vector $e_k,$ by invariance it contains the vectors $e_{k+1}, \ e_{k+2}, \ \dots,$ and hence the two closed subspaces coincide.
\end{proof}

The weights $\{a_n\}$ satisfying the conditions of Proposition~\ref{p: equivalent norms} satisfy $\lim_n a_n = 0, $ so that the wighted shift $T$ is quasinilpotent, and hence
the algebra $\sA_T$ is radical. The consequences of the Lemma mentioned so far did not make direct use of the fact that the elements of the algebra are all quasinilpotent.
The following Proposition gives a different sort of criterion as to when an element $S \in \sA_T$ generates an ideal of the form ${<}T^k{>}.$
Here we do not need to assume the equivalence of the operator norm to the norm $S \mapsto ||Se_0||_2,$ rather we need only assume that the weighted shift $T$ 
is quasinilpotent.

Recall (\cite{Rid70}) that a necessary and sufficient condition for a weighted shift operator to be quasinilpotent is that
$ \lim_n  \sup_k  {|a_{k+1} \cdots a_{k+n}|}^{\frac{1}{n}} = 0.$

\begin{proposition} \label{p: some ideals}
Let $\{a_n\}_{n \geq 0}$ be a sequence of nonzero weights such that the unilateral weighted shift operator $Te_n = a_n e_{n+1}$ is quasinilpotent, and hence the
algebra $\sA_T$ is radical. Let $S \in \sA_T$ be a nonzero element such that $\hat{S}(j) = 0, \  j < k$ and $\hat{S}(k) \neq 0.$ If in the unital algebra $\ti{\sA}_T, \ 
S$ factors as $S = T^kQ$ for some $Q \in \ti{\sA}_T,$ then ${<}S{>} = {<}T^k{>}.$ In particular, if $S$ is a polynomial $\hat{S}(k)T^k + \cdots \hat{S}(n)T^n,$ then
\mbox{${<}S{>} = {<}T^k{>}.$}
\end{proposition}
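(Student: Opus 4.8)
The plan is to show the inclusion $\langle S\rangle \supseteq \langle T^k\rangle$, since the reverse inclusion $\langle S\rangle \subseteq \langle T^k\rangle$ is immediate from the hypothesis $\hat{S}(j)=0$ for $j<k$ (the same Fejér-polynomial argument used in Corollary~\ref{c: more ideals}: the partial sums of the Fourier series of $S$ lie in $\langle T^k\rangle$, and $\langle T^k\rangle$ is closed, so $S\in\langle T^k\rangle$). Given the factorization $S = T^kQ$ with $Q\in\ti{\sA}_T$, I would first observe that $Q$ has a nonzero constant term: writing $Q = \hat{Q}(0)I + Q_0$ with $Q_0 \in \sA_T$ (so $\hat{Q_0}(0)=0$), we get $S = \hat{Q}(0)T^k + T^kQ_0$, and comparing $k$-th Fourier coefficients gives $\hat{S}(k) = \hat{Q}(0)\,$ (up to the nonzero scalar bookkeeping coming from the identification $\hat{S}(k)T^k = \int \ga_\la(S)\la^{-k}\,d|\la|$); the point is simply that $\hat{Q}(0) = \hat{S}(k)/1 \neq 0$. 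Actually what I really need is just that the scalar $c := \hat{Q}(0)$ is nonzero, which follows since otherwise $S = T^kQ_0$ would have $\hat{S}(k) = 0$.

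Next, the key step: I want to invert $Q$ in $\ti{\sA}_T$, or at least extract $T^k$ from $S$ within the ideal. Since $\ti{\sA}_T$ is radical-plus-unit — concretely, $\sA_T$ is the Jacobson radical of $\ti{\sA}_T$ because $T$ is quasinilpotent (every element of $\sA_T$ is quasinilpotent, being a norm limit of polynomials vanishing at $0$ in a quasinilpotent operator, hence in the radical; and the quotient $\ti{\sA}_T/\sA_T \cong \bbC$ is a field, so $\sA_T$ is exactly the radical) — the element $Q = cI + Q_0$ with $c\neq 0$ and $Q_0\in\rad(\ti{\sA}_T)$ is invertible in $\ti{\sA}_T$, with $Q^{-1} = c^{-1}(I + c^{-1}Q_0)^{-1} = c^{-1}\sum_{n\geq 0}(-c^{-1}Q_0)^n$, the Neumann series converging because $Q_0$ is quasinilpotent (so $\|Q_0^n\|^{1/n}\to 0$). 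Then $T^k = SQ^{-1}$. Now $Q^{-1} \in \ti{\sA}_T$, so $Q^{-1} = c^{-1}I + R$ for some $R \in \sA_T$, whence $T^k = c^{-1}S + SR$. Since $S \in \langle S\rangle$ and $SR \in \langle S\rangle$ (as $R \in \sA_T$ and $\langle S\rangle$ is an ideal of $\sA_T$), we conclude $T^k \in \langle S\rangle$, hence $\langle T^k\rangle \subseteq \langle S\rangle$, giving equality.

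For the final sentence about polynomials: if $S = \hat{S}(k)T^k + \cdots + \hat{S}(n)T^n$ with $\hat{S}(k)\neq 0$, then $S = T^k\bigl(\hat{S}(k)I + \hat{S}(k+1)T + \cdots + \hat{S}(n)T^{n-k}\bigr)$, and the bracketed factor $Q$ lies in $\ti{\sA}_T$ with $\hat{Q}(0) = \hat{S}(k) \neq 0$, so the general statement applies. The main obstacle — really the only subtle point — is justifying that $Q$ is invertible in $\ti{\sA}_T$, i.e. confirming that $\sA_T$ coincides with the Jacobson radical of $\ti{\sA}_T$ (equivalently, that $I+Q_0$ is invertible for $Q_0\in\sA_T$); this rests on the quasinilpotence of $T$ propagating to all of $\sA_T$, which in turn follows from the spectral radius being continuous-ish under norm limits — more carefully, $\|R^n\|^{1/n}\to 0$ for $R\in\sA_T$ because $R$ is a norm-limit of polynomials in the quasinilpotent $T$, and one checks the spectral radius of such limits vanishes — together with the Neumann series convergence. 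Everything else is formal ideal-theoretic manipulation.
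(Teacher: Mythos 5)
Your proposal is correct and follows essentially the same route as the paper: both directions are handled identically, with $\langle S\rangle\subseteq\langle T^k\rangle$ coming from approximation by polynomials supported in degrees $\geq k$, and the reverse inclusion obtained by inverting $Q=cI+Q_0$ via a Neumann series (the paper normalizes $\hat S(k)=1$ and writes $Q=I-R$, but this is only cosmetic), then observing that $T^k=SQ^{-1}$ lands in $\langle S\rangle$ because $Q^{-1}-c^{-1}I\in\sA_T$. Your extra remarks verifying $\hat Q(0)\neq 0$ and that $\sA_T$ is the Jacobson radical of $\ti{\sA}_T$ are sound and merely make explicit what the paper leaves implicit.
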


\begin{proof}
First observe that ${<}S> \subset {<}T^k{>}.$  By Proposition~\ref{p: reconstruction}
there is a sequence of polynomials $\{p_n(T)\} \subset {<}T^k{>}$ converging to ${<}S{>}.$  Thus $S$ belongs to the closed ideal ${<}T^k{>},$ and hence
 ${<}S{>} \subset {<}T^k{>}.$

Now we prove the reverse containment.
Since multiplying $S$ by a nonzero constant does not change the ideal ${<}S{>},$ we may assume that $\hat{S}(k) = 1,$ so that if $S = T^kQ,$ then 
$\hat{Q}(0) = 1.$ Writing $Q = I - R,$ we have that $I - R$ is invertible in $\ti{\sA}_T$ with inverse $ I + R + R^2 + \cdots.$ Indeed, since $R \in \sA_T$ which is radical,
given any $r > 0, \ ||T^n|| \leq r^n$ for $ n \geq N_r.$ Thus, $S(I-R)^{-1} = S + SR + SR^2 + \cdots.$  Note that while $S(I-R)^{-1}$ is a  product in the unital algebra, 
the sum $ S + SR + SR^2 + \cdots.$ is computed in $\sA_T,$ and equals $T^k.$ It follows that $T^k \in {<}S{>},$ and hence ${<}T^k{>} \subset {<}S{>}.$

Finally observe that if $S$ is a polynomial in $T,$ then the factorization $S = T^k Q$ is realizable in $\ti{\sA}_T.$
\end{proof}

In \cite{Sh74}, Theorem 3, Shields characterizes the commutant of a weighted shift in terms of formal power series. In particular, that implies that the
commutant is an integral domain. It follows that the smaller algebra $\sA_T$ is also an integral domain, though in our case a non-unital integral domain.
The existence of the gauge action on $\sA_T$ allows us to deduce the same result.

\begin{proposition} \label{p: no integral domain}
Let $T$ be a weighted shift with nonzero weight sequence as in Lemma~\ref{l:gauge action for shift}. Then $\sA_T$ is a non-unital integral domain.

In particular, if $T$ is a quasinilpotent weighted shift, then the nonzero elements of $\sA_T$ are quasinilpotent and not nilpotent.
\end{proposition}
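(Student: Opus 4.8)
The plan is to prove that $\sA_T$ has no zero divisors by exploiting the Fourier series representation afforded by the gauge action (Lemma~\ref{l: unique}). Given nonzero $S, S' \in \sA_T$, I want to show $SS' \neq 0$. The idea is to mimic the classical argument that a product of power series with a lowest nonzero term is again a power series whose lowest term is the product of those terms. So first I would set $m = \min\{j : \hat{S}(j) \neq 0\}$ and $n = \min\{j : \hat{S'}(j) \neq 0\}$; both exist and are finite by Lemma~\ref{l: unique}, since $S, S'$ are nonzero.

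The key computation is to identify the Fourier coefficients of the product $SS'$. Since each $\ga_\la$ is an algebra automorphism, $\ga_\la(SS') = \ga_\la(S)\ga_\la(S')$, and I would integrate against $\la^{-k}$ to extract $\widehat{SS'}(k) T^k$. The heuristic is the convolution formula $\widehat{SS'}(k) = \sum_{i+j=k} \hat{S}(i)\hat{S'}(j)$, so that $\widehat{SS'}(k) = 0$ for $k < m+n$ and $\widehat{SS'}(m+n) = \hat{S}(m)\hat{S'}(n) \neq 0$. To make this rigorous without worrying about convergence of a double series, the cleanest route is to approximate: by Proposition~\ref{p: reconstruction}(4), choose polynomials $p(T), q(T)$ (convex combinations of Fejér means) with $\|p(T) - S\|, \|q(T) - S'\|$ small; then $p(T)q(T) \to SS'$ in norm, and by continuity of each $\ga_\la$ and of the integral, $\widehat{SS'}(m+n) = \lim \widehat{p(T)q(T)}(m+n)$. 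For polynomials the convolution identity is elementary, and since the Fejér means preserve the vanishing of low-order coefficients and converge coefficientwise, $\widehat{p(T)q(T)}(m+n) \to \hat{S}(m)\hat{S'}(n)$. Hence $\widehat{SS'}(m+n) \neq 0$, so $SS' \neq 0$ by Lemma~\ref{l: unique}. This establishes that $\sA_T$ is an integral domain; it is nonunital because $T$ (indeed every element) is a proper limit of polynomials vanishing at $0$, so no element acts as a two-sided identity — alternatively, in the quasinilpotent case every element has spectrum $\{0\}$ and cannot be invertible in any unitization.

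For the final assertion, suppose $T$ is quasinilpotent, so $\sA_T$ is radical and every $S \in \sA_T$ has $\spec(S) = \{0\}$, i.e.\ is quasinilpotent. If some nonzero $S$ were nilpotent, say $S^r = 0$ with $r \geq 2$ minimal, then $S \cdot S^{r-1} = 0$ with both factors nonzero, contradicting the integral domain property just established. Hence no nonzero element of $\sA_T$ is nilpotent.

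The main obstacle is the rigorous handling of the Fourier coefficient of the product: one must avoid asserting convergence of the formal convolution series directly (the Fourier series of a general $S \in \sA_T$ need not converge in norm without the extra hypotheses of Proposition~\ref{p: equivalent norms}), and instead argue via the polynomial approximation in Proposition~\ref{p: reconstruction}, checking that passing to the limit is legitimate for the single coefficient $\widehat{SS'}(m+n)$ — which it is, because evaluating a fixed Fourier coefficient is a norm-continuous linear functional on $\sA_T$.
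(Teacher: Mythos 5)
Your proposal is correct and follows essentially the same route as the paper's proof: both extract the minimal nonzero Fourier coefficients of the two factors via Lemma~\ref{l: unique}, approximate by the Fej\'er-type polynomials of Proposition~\ref{p: reconstruction} (which preserve the vanishing of low-order coefficients and converge coefficientwise), use the elementary convolution identity for polynomials to see that $\widehat{SS'}(m+n)=\hat S(m)\hat S'(n)\neq 0$ in the limit, and then deduce non-nilpotency of nonzero elements from the integral-domain property. No substantive differences.
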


\begin{proof}
Let $ R, S \in \sA_T,  $ be nonzero elements. From Lemma~\ref{l: unique} we know that the Fourier coefficients of $R$ are not all zero, and similarly for $S.$
By Proposition~\ref{p: reconstruction} there is a sequence of polynomials $\{p_n\}$ (resp., $\{q_n\}$) which are convex combinations of Fejer polynomials, so that
$\{p_n(T)\} $ converges to $R$ (resp., $\{q_n(T)\}$ converges to $S$). In particular, it follows from the Fejer property that $\wh{p_n}(k) \neq 0$ implies $\hat{R}(k) \neq 0$
(resp., $\wh{q_n}(k) \neq 0$ imples $\hat{S}(k) \neq 0).$ Furthermore, for all $k, \ \wh{p_n}(k) \to \hat{R}(k) $ (resp., $\wh{q_n}(k) \to \hat{S}(k)$) as $n \to \infty.$

Let 
\[ j_0 = \min\{j: \hat{R}(j) \neq 0\} \text{ and } k_0 = \min\{k: \hat{S}(k) \neq 0 \}.\]

Now $\{p_n(T) q_n(T)\}$ converges in norm to $RS,$ and so $\{\wh{p_n q_n}(\ell)\}$ converges to $\wh{RS}(\ell)$ for all $\ell \in \bbN.$
If $\ell_0 = \min\{ \ell: \wh{p_n q_n}(\ell) \neq 0 \text{ for } n \text{ sufficiently large}\}$
then $\ell_0 = j_0 + k_0$ and $\wh{p_n q_n}(\ell_0) = \wh{p_n}(j_0) \wh{q_n}(k_0) .$ Hence
\[ \wh{RS}(\ell_0) = \lim_n \wh{p_n}(j_0) \wh{q_n}(k_0) \neq 0\]
so that $RS \neq 0.$

For the second statement, if $T$ is quasinilpotent, then the elements of $\sA_T,$ are quasinilpotnt, and the nonzero elements are not nilpotent as $\sA_T$ is an integral domain.

\end{proof}

%%%%%%%%%%%%%%%%%%%%%%%%%%%%%%%%%%%%%%%%%%%%%%%%%%%%%%%%%%%%%%%%%%%%%%%%%%%%%%%%%%%
%%%%%%%%%%%%%%%%%%%%%%%%%%%%%%%%%%%%%%%%%%%%%%%%%%%%%%%%%%%%%%%%%%%%%%%%%%%%%%%%%%%
%%%%%%%%%%%%%%%%%%%%%%%%%%%%%%%%%%%%%%%%%%%%%%%%%%%%%%%%%%%%%%%%%%%%%%%%%%%%%%%%%%%%
%%%%%%%%%%%%%%%%%%%%%%%%%%%%%%%%%%%%%%%%%%%%%%%%%%%%%%%%%%%%%%%%%%%%%%%%%%%%%%%%%%%%%%

\section{The Volterra operator algebra}

Let $V$ be the Volterra operator on $L^2[0,1],$ given by $V\xi(x) = \int_0^x \xi(t)\, dt.$ Then we know (\cite{Lax}) that for $n \geq 0,$
\begin{equation} \label{eq: Vn}\
 V^{n+1}\xi(x) = \frac{1}{n!} \int_0^x  (x-t)^n \xi(t)\, dt 
\end{equation}

Let $f$ be any $L^2[0, 1]$ function and let $V_f$ denote the operator on $L^2[0, 1]$ given by
\[ V_f \xi(x) = \int_0^x f(x-t) \xi(t)\, dt \]
Observe this is bounded, since
\begin{align*} 
|V_f \xi(x)| &= |\int_0^x f(x-t) \xi(t)\, dt| \\
	&\leq \int_0^x |f(x-t)| \, |\xi(t)|\, dt \\
	&\leq [\int_0^x |f(x-t)|^2\, dt]^{\frac{1}{2}} [\int_0^x |\xi(t)^2|\, dt]^{\frac{1}{2}} \\
	&\leq ||f||_2 ||\xi||_2
\end{align*}

Now, since any $L^2[0, 1]$ function $f$ is the $L^2$ limit of a sequence of polynomials, $\{p_n\},$ we have that
 \[ |(V_f - V_{p_n})\xi(x)| \leq ||f- p _n||_2 ||\xi||_2 \]
so that 
\begin{equation} \label{e: ell2}
  ||V_f - V_{p_n}|| \to 0 
\end{equation}

Let $\sA_V$ denote the Volterra operator algebra, by which we mean the operator norm closure of the polynomials $p$ in $V$ with $p(0) = 0.$  We have just
shown that  $V_f \in \sA_V$  if  $f \in L^2[0, 1].$   We would like to characterize arbitrary $T \in \sA_V.$

%%%%%%%%%%%%%%%%%%%%%%%%%%%%%%%%%%%%%%%%%%%%%%%%%%%%%%%%%%%%%%%%%%%%%%%%%%%%%%%
%%%%%%%%%%%%%%%%%%%%%%%%%%%%%%%%%%%%%%%%%%%%%%%%%%%%%%%%%%%%%%%%%%%%%%%%%%%%%%%%
%%%%%%%%%%%%%%%%%%%%%%%%%%%%%%%%%%%%%%%%%%%%%%%%%%%%%%%%%%%%%%%%%%%%%%%%%%%%%%%%%%%%

\begin{theorem} \label{t: Vf in L2} 
\begin{enumerate}
\item Let $f \in L^1[0, 1].$ Then for $\rho \in L^2[0, 1],$ the function $(V_f)\rho(x) := \int_0^x f(x-t)\rho(t) \, dt \in L^2[0, 1],$ and
\[ ||(V_f)\rho||_2 \leq  ||f||_1 \, ||\rho||_2 \text{ and hence } ||V_f|| \leq  ||f||_1 \]
\item $ f \in L^1[0, 1] \mapsto ||V_f|| $ is a norm on $L^1[0, 1].$
\item  Let $T \in \sA_V$ and $\{f_n\}$ a sequence of functions in $L^1[0, 1]$ such that $V_{f_n} \to T$ in $\sA_V.$
If $ 0 < x_0 < 1,$ then the sequence $\{ \int_0^{x_0} |f_n(x)|\, dx \}$ is bounded.
\item Let $ 0 < x_0 < 1$ and let $S_{x_0} = \{ f: f \in L^1 [0, 1], \ f(x) = 0 \text{ for } x_0 < x \leq 1 \}.$
	Then on $S_{x_0}$ the operator norm $ f \mapsto ||V_f||$ and the $L^1$-norm $f \mapsto \int_0^1 |f|$ are equivalent.
\item  Let $T \in \sA_V.$ Then there is a measurable function $f$ on $[0, 1],$ integrable over compact subsets of $[0, 1),$ such that $T = V_f.$
\item If $f, g$ are measurable functions on $[0, 1]$ such that $V_f, V_g \in \sA_V$ and $ \al \in \bbC,$ then $V_{\al f} = \al V_f$ and $V_{f+g} = V_f + V_g.$

\end{enumerate}
\end{theorem}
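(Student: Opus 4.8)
The plan is to take the six items roughly in order: (1)--(2) are soft, (3) is the technical core, and (4)--(6) are deduced from (3) together with (1).

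\textbf{Items (1) and (2).} For (1) the point is that $(V_f)\rho$ is a truncated convolution: extending $f$ and $\rho$ by zero, $(V_f)\rho(x)=\int_0^1 f(s)\rho(x-s)\,ds$ for $x\in[0,1]$, so by Minkowski's integral inequality $\|(V_f)\rho\|_{L^2[0,1]}\le\int_0^1|f(s)|\,\|\rho(\cdot-s)\|_{L^2[0,1]}\,ds\le\|f\|_1\|\rho\|_2$ (equivalently: convolution on $\bbR$ by $f\in L^1$ has $L^2(\bbR)$-norm at most $\|f\|_1$, and restriction to $[0,1]$ does not increase the norm). This gives $\|V_f\|\le\|f\|_1$; in particular every $V_f$ with $f\in L^1[0,1]$ is bounded, and, approximating $f$ in $L^1$ by polynomials, $V_f\in\sA_V$. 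For (2), positive homogeneity and subadditivity of $f\mapsto\|V_f\|$ follow from the pointwise identities $V_{\al f}\rho=\al V_f\rho$ and $V_{f+g}\rho=V_f\rho+V_g\rho$ under the integral sign; and $\|V_f\|=0$ forces $f=0$ a.e., since evaluating $V_f$ on the constant function $\mathbf 1$ gives $\int_0^x f=0$ for a.e.\ (hence every) $x$, so $f=0$ a.e.\ by Lebesgue differentiation.

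\textbf{Item (3) --- the main obstacle.} The hypothesis on $\{f_n\}$ amounts to $\sup_n\|V_{f_n}\|<\infty$, so the real target is a bound $\int_0^{x_0}|f|\le\Phi_{x_0}\!\bigl(\|V_f\|\bigr)$ with $\Phi_{x_0}$ depending only on $x_0$. I would attack this via test vectors: for $\xi,\eta\in L^2[0,1]$ one has $\langle V_f\xi,\eta\rangle=\int_0^1 f(s)\,\psi_{\xi,\eta}(s)\,ds$ with $\psi_{\xi,\eta}(s)=\int_0^{1-s}\xi(t)\overline{\eta(s+t)}\,dt$, and the plan is to produce $\xi,\eta$ of controlled norm whose ``correlation'' $\psi_{\xi,\eta}$ approximately reproduces the unimodular function $\overline{f(s)}/|f(s)|$ on $[0,x_0]$ while staying harmless on $[x_0,1]$. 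The constraint $x_0<1$ enters here: the slack interval $(x_0,1)$ supplies room to place the lagging vector. The delicate issue --- the crux of the whole theorem --- is to make this quantitative and uniform in $f$, which I expect to do by partitioning $[0,x_0]$, matching $\overline{f}/|f|$ cell by cell with rank-one tensors, estimating the trace norm of the resulting finite-rank operator that $V_f$ is paired against, and optimizing the mesh. A softer alternative is a contradiction argument: if $\int_0^{x_0}|f_{n_k}|\to\infty$, normalize, extract a weak-$\ast$ limit in the measures on $[0,x_0]$, and contradict the norm-convergence $V_{f_n}\to T$.

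\textbf{Items (4), (5), (6).} Item (4) is immediate from (3): on $S_{x_0}$ one has $\int_0^{x_0}|f|=\|f\|_1$, so (3) and (1) give $c_{x_0}\|f\|_1\le\|V_f\|\le\|f\|_1$ (with the lower bound upgraded to a linear one, e.g.\ by the bounded inverse theorem applied to the injective bounded map $f\mapsto V_f$ on the Banach space $S_{x_0}$). For (5): given $T\in\sA_V$, pick polynomials $p_n$ with $V_{p_n}\to T$; by (3) the family $\{p_n\}$ is bounded in $L^1[0,x_0]$ for every $x_0<1$, so a diagonal subsequence converges weak-$\ast$ as measures on each $[0,x_0]$ to a locally finite measure $\mu$ on $[0,1)$. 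Passing to the limit in $(V_{p_n}\xi)(x)=\int_0^x p_n(x-t)\xi(t)\,dt$ on a dense set of $\xi$ identifies $T$ as the operator with kernel $\mu$; one then argues $\mu$ is absolutely continuous --- atoms are excluded because every element of $\sA_V$ is compact whereas $V_{c\de_a}$ $(0<a<1)$ is a non-compact truncated isometry, and the singular continuous part is ruled out by a refinement of the estimate in (3) (or via the known description of the commutant of $V$) --- so $\mu=f\,dx$ with $f\in L^1_{\mathrm{loc}}[0,1)$ and $T=V_f$. Finally (6) is routine: if $V_f,V_g\in\sA_V$, then by (5) $f$ and $g$, hence $\al f$ and $f+g$, lie in $L^1_{\mathrm{loc}}[0,1)$, so $V_{\al f}$ and $V_{f+g}$ make sense, and the identities $V_{\al f}=\al V_f$, $V_{f+g}=V_f+V_g$ follow from linearity of the defining integrals (valid a.e., since $V_f$ and $V_g$ are bounded).
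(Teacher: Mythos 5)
Your items (1), (2) and (6) are fine; in fact your argument for (2) (apply $V_f$ to the constant function $\mathbf{1}$ and use that $x\mapsto\int_0^xf$ is continuous, hence vanishes identically if it vanishes a.e.) is cleaner than the paper's construction of test vectors $\rho,\xi$. The difficulty is item (3). What you offer there is not a proof but two candidate strategies (``I would attack\dots'', ``I expect to do\dots''), neither of which is carried out, and you correctly observe that (4) and (5) are then deduced from (3); so as written the proposal does not establish the theorem. You will not find the missing estimate in the paper either: its entire argument for (3) is the one-line assertion that unboundedness of $\int_0^{x_0}|f_n|$ would force $T(\mathbf{1})(x)$ to be infinite or undefined for $x\geq x_0$, which ignores the possibility of cancellation in $\int_0^xf_n$.

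Worse, the uniform bound you set as your target in (3), namely $\int_0^{x_0}|f|\leq\Phi_{x_0}(\|V_f\|)$, is false, so neither of your strategies can be completed. Extending $f$ and $\rho$ by zero exhibits $V_f$ as a compression of convolution by $f$ on $L^2(\bbR)$, whence $\|V_f\|\leq\sup_{\omega\in\bbR}|\widehat{f}(\omega)|$. Take a smooth bump $g\geq0$ supported in $[0,1/2]$ with $\int g=1$, widely separated frequencies $\lambda_1,\dots,\lambda_N$, signs $\epsilon_k=\pm1$, and set $f_N(t)=g(t)\sum_{k=1}^N\epsilon_k\cos(\lambda_kt)$. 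Then $\widehat{f_N}$ is a sum of well-separated translates of the rapidly decaying $\widehat{g}$, so $\sup_\omega|\widehat{f_N}(\omega)|=O(1)$ uniformly in $N$, hence $\|V_{f_N}\|=O(1)$; but Khintchine's inequality produces a choice of signs with $\|f_N\|_1\geq c\sqrt{N}$. Rescaling, $h_N=f_N/\|f_N\|_1^{1/2}$ lies in $S_{1/2}$, satisfies $V_{h_N}\to0$ in operator norm, yet $\int_0^{1/2}|h_N|\to\infty$ --- contradicting (3) with $T=0$ and the norm equivalence claimed in (4). The same cancellation defeats your ``softer alternative'': the normalized $f_{n_k}$ can converge weak-$*$ to the zero measure, so no contradiction with $V_{f_n}\to T$ arises. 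The cell-by-cell pairing scheme likewise cannot close, since summing $N$ rank-one testers costs a factor of order $\sqrt{N}$ in the $L^2$ norms, which is exactly the loss the counterexample exploits. The conclusion to draw is not that (3) is merely hard but that it, and with it your derivations of (4) and (5) (the latter already only sketched, e.g.\ ``the singular continuous part is ruled out by a refinement of the estimate in (3)''), must be either weakened or proved by a route that does not pass through local $L^1$ control of the kernel by the operator norm.
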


\begin{proof}

Let $\rho \in L^2[0, 1]$ and $f \in L^1[0, 1].$ For $x \in [0, 1]$ define the function
$f_x$ by
\[ f_x(t) =
\begin{cases}
f(x-t) \text{ if } 0 \leq t \leq x \\
0 \text{ if } x < t \leq 1 
\end{cases}
\]
\begin{align*}
|(V_f)\rho(x)| &= |\int_0^x f(x-t)\rho(t)\, dt| \\
	&\leq \int_0^x |f(x-t)|\, |\rho(t)|\, dt \\
	&\leq \int_0^1 (\sqrt{|f_x(t)|})\, (\sqrt{|f_x(t)|}|\rho(t)|) \, dt \\
	&\leq (\int_0^1 |f_x(t)|\, dt)^{\frac{1}{2}} \, (\int_0^1 |f_x(t)|\, |\rho(t)|^2 \, dt)^{\frac{1}{2}} \\
	&\leq ||f_x||_1^{\frac{1}{2}}\, (\int_0^1 |f_x(t)|\, |\rho(t)|^2 \, dt)^{\frac{1}{2}} \\
	&\leq ||f||_1^{\frac{1}{2}} \, (\int_0^1 |f_x(t)|\, |\rho(t)|^2 \, dt)^{\frac{1}{2}} \\
\end{align*}

Thus
\begin{align*}
 \int_0^1 |(V_f)\rho(x)|^2\, dx &\leq ||f||_1  \int_0^1 \int_0^1 |f_x(t)|\, |\rho(t)|^2\, dt\, dx \\
	&\leq  ||f||_1  \int_0^1 (\int_0^1 |f_x(t)|\, dx)|\rho(t)|^2\,  dt \\
	&\leq ||f||_1 ||f||_1 |||\rho|^2||_1 \\
	&\leq (||f||_1 \, ||\rho||_2)^2 
\end{align*}

Thus, $(V_f)\rho \in L^2[0, 1]$ and $||(V_f)\rho||_2 \leq ||f||_1\, ||\rho||_2.$ Thus $||V_f|| \leq ||f||_1.$

To prove (2), note that by definition,
\[ ||V_f|| = \sup \{ |<(V_f) \rho, \xi>| \ : ||\rho||_2 \leq 1, \  ||\xi||_2 \leq 1 \}. \]

For $f \in L^1[0, 1],$ define 
\[ \rho(t) = \ol{\text{sgn}(f(1-t))} :=
\begin{cases}
\frac{\ol{f(1-t)}}{|f(1-t)|} \text{ if } f(1-t) \neq 0 \\
0 \text{ otherwise}
\end{cases} \]
and let $F(x) = \int_0^x f(x-t) \rho(t) \, dt.$  Then $F$ is continuous, and $F(1) = ||f||_1.$  Now let $\xi(x) = \text{sgn}(F(x)).$
Since $||\rho||_2, \ ||\xi||_2 \leq 1,$ we have

\[ ||V_f|| \geq | <(V_f)\rho, \xi>| = \int_0^1 |F(x)|\, dx > 0. \]

For (3),  if the sequence of integrals is not bounded, then for all $ x_0 \leq x \leq 1,$ the values of $T(1)(x)$ are either $\pm \infty$ or undefined. But that
contradicts that $T(1) \in L^2[0, 1].$ 

For (4),  if the space $S_{x_0}$ is complete in the operator norm $f \mapsto ||V_f||,$ then since by part (1) $||V_f|| \leq ||f||_1,$ it follows from a
Corollary of the Open Mapping Theorem that the two norms are equivalent. Suppose that $S_{x_0}$ is not complete in the operator norm. Then there is
an element $T \in \sA_V, \ ||T|| = 1,$ and a sequence $\{f_n\} \subset S_{x_0}$ so that $\{V_{f_n}\}$ converges to $T$ with $\int_0^{x_0} |f_n|$ unbounded. But that
contradicts (3).

For (5), let $ 0 < x_0 < 1 $ and define the projection $P_{x_0}$ on $ L^2[0, 1]$ by
\[ P_{x_0}\rho(t) =
\begin{cases}
\rho(t) \text{ if } 0 \leq t \leq x_0 \\
0 \text{ otherwise }
\end{cases}
\]
For $f \in L^1[0, 1],$ let $f_{x_0} \in S_{x_0} $ be defined by 
$f_{x_0}(t) = 
\begin{cases}
f(t) \text{ if } 0 \leq t \leq x_0 \\
0 \text{ otherwise}
\end{cases} $
Observe that 
\[  P_{x_0} V_f P_{x_0} = P_{x_0} V_{f_{x_0}} P_{x_0} \]
Also note that $ f \in S_{x_0} \mapsto ||P_{x_0} V_f P_{x_0}||$ is a norm on $S_{x_0},$ weaker than the operator norm. 
That it is a norm follows from (2), with the interval $[0, 1]$ replaced by $[0, x_0].$

Let $T \in \sA_V$ and $\{f_n \}$ a sequence of functions in $L^1[0, 1]$ such that $\{V_{f_n}\}$ converges to $T.$ Then 
\[ \{P_{x_0} V_{f_n} P_{x_0} = P_{x_0} V_{f_{n, x_0}} P_{x_0} \} \text{ converges to }  P_{x_0}TP_{x_0} \]
In other words, the sequence $\{V_{f_n, x_0}\}$ converges in the norm defined in the previous paragraph. Since by (3) the integrals $\int_0^{x_0} |f_n|$ are bounded, the
limit of the sequence $\{f_{n, x_0} \}$  with respect to this norm belongs to $S_{x_0}. $
Thus, the limit of $ \{ V_{f_n, x_0} \}$ has the form $V_{g}$ for some $g \in S_{x_0}.$

Now if $ x_0 < y < 1,$ then $\{f_{n, y}\} $ converges with respect to the norm $ q \in S_y \mapsto ||P_y V_q P_y||$ to a function $h.$ In other words,
$ \{V_{f_{n, y}} \}$ converges in this norm to an operator $V_h$ for some $h \in S_y,$ and furthermore, the restriction of $h$ to the interval $[0, x_0]$
coincides with $g.$ Thus, if $x_0 < x_1 < x_2 < \cdots < 1$ with $\lim_n x_n = 1,$ then we obtain a sequence of functions $f_{x_n}$ such that
$f_{x_{n+1}}$ restricted to $[0, x_n]$ equals $f_{x_n}$ on that interval. Thus we obtain a function $f,$ measurable on $[0, 1],$ whose restriction
to $[0, x_n]$ is equals the restriction of $f_{x_n}$ to $[0, x_n].$ Finally, the fact that $f$ is integrable over compact subsets of $[0, 1)$ follows from
the fact that all of the $f_{x_n}$ are integrable.

To verify (6), let $\{p_n\}, \ \{q_n\}$ be sequences of polynomials such that $\{V_{p_n}\}, \{\ V_{q_n}\}$ converge to $V_f, \ V_g$ respectively.
Since $V_{p_n + q_n} = V_{p_n} + V_{q_n},$ and $V_{\al p_n} = \al V_{p_n},$ the conclusion follows.

\end{proof}

In \cite{LiRe98} G. Little and J. B. Reade prove an asympotic estimate for the norm of powers of the Volterra operator $V:$
\[ \lim_n n! ||V^n|| = \frac{1}{2} .\]

This result can be interpreted as an asymptotic estimate of the ratio of the operator norm to the $L^1$-norm on the set of functions $f_n(x) = x^n.$
Indeed, since $n!V^{n+1} = V_{f_n},$ we have
\[ \frac{||V_{f_n}||}{||f_n||_1} = \frac{n!||V^{n+1}||}{\frac{1}{n+1}} = (n+1)! ||V^{n+1}|| \to \frac{1}{2} \]
as $n \to \infty.$ Since the set of functions $\{f_n: \ n = 0, 1, \dots \}$ spans a dense subspace of the operator algebra $\sA_V,$ this result
seems to suggest that the two norms may be equivalent.

However, it turns out that the two norms are not equivalent, as the following example shows.

\begin{example} \label{e:notell1}

Let $\sS$ be the space of Lebesgue measurable functions $f$ on $[0, 1]$  such that, for every $ 0 < x < 1, \  f|_{[0, x]} \in L^2[0, x].$ 
For $ f \in \sS,$ define 
\[\rho_x(t) =
\begin{cases}
\frac{1}{c(x)}\ol{f(x - t)}, \text{ if } c(x) \neq 0 \text{ and } t \leq x \\
0, \text{ otherwise } 
\end{cases} \]
where $c(x) = [\int_0^x |f(x-t)|^2 \, dt]^{\frac{1}{2}} .$
Then, clearly, for any function $\rho \in L^2[0, 1]$ of unit norm, $ |\int_0^x f(x-t)\rho(t)\, dt | \leq \int_0^x f(x-t)\rho_x(t)\, dt.$

Thus, if $G(x) = (\int_0^x f(x-t)\rho_x(t) \, dt)^2,$ and if $f$ is not zero a.e. in the interval $[0, x],$ we have

\begin{align*}
G(x) &= \frac{1}{c(x)^2} (\int_0^x |f(x-t)|^2 \, dt)^2 \\
	&= \int_0^x |f(x-t)|^2 \, dt
\end{align*}
Hence $||V_f|| \leq [\int_0^1 G(x) \, dx]^{\frac{1}{2}}.$

Let $\sS_1 = \{ f \in \sS: \int_0^1 \int_0^x |f|^2 < \infty \},$ and for 

\[f \in \sS_1, \text{ set } ||f||_{\sharp} = [\int_0^1 \int_0^x |f|^2]^{\frac{1}{2}}. \]

If the kernel $k_f$ is defined by
\[ k_f(x, t) =
\begin{cases}
f(x-t) \text{ if } t \leq x \\
0 \text{ if } t > x
\end{cases}
\]
then the condition $ f \in \sS_1$ is equivalent to $k_f \in L^2([0, 1] \times [0, 1])$ and in that case $||f||_{\sharp} = ||k_f||_2.$  Thus $||f||_{\sharp}$ is the
Hilbert-Schmidt norm of the operator $V_f.$

To show that $\sA_V$ properly contains $L^1[0, 1]$ it suffices to exhibit a function $f$ with $||f||_{\sharp} < \infty$ but $f \notin L^1[0, 1].$

Now let $f = \sum_{n=1}^{\infty} \frac{2^n}{n}\, \chi_{[1- 2^{-(n-1)}, 1 - 2^{-n})}.$ Then $ \int_0^1 f = \sum_{n=1}^{\infty} \frac{1}{n} $ diverges, so $f \notin L^1[0, 1].$
However, $||f||_{\sharp}^2 = \int_0^1 G$ is finite. To see this, view the area under the graph of $G$ as divided into horizontal strips. The portion of the area between $G(1 - 2^{-(n-1)})$ and
$G(1- 2^{-n})$ is $\frac{3}{2}\cdot \frac{1}{n^2}.$ Thus,

\[ \int_0^1 G = \sum_{n=1}^{\infty} \frac{3}{2}\cdot \frac{1}{n^2} = \pi^2/4.\]

It follows that the operator norm $ ||V_f||$ is finite, and in fact \mbox{$||V_f|| \leq \pi/2.$} If $f_n := f\chi_{[0, 1-2^{-n})},$ then $f_n \in L^1[0, 1]$ and a calculation
similar to the above shows that $||V_f - V_{f_n}|| \to 0$ as $n \to \infty.$ Since $V_{f_n} \in \sA_V$ and $\sA_V$ is by definition complete, $V_f \in \sA_V.$

Thus, $ V_f \in \sA_V,$ but $f \notin L^1[0, 1].$
\end{example}

\begin{remark} \label{r: HS}
There is no simple relationship between the Hilbert Schmidt norm of $V_f$ and $||f||_1.$
We have just seen that the Hilbert-Schmidt norm of $V_f$ can be finite and $f$ not in $L^1[0, 1].$
On the other hand,  if $f \in L^1[0, 1]$ is such that, for some $ 0 < x < 1, \ f|_{[0, x]} \notin L^2[0, x],$
then the $L^1$ norm of $f$ is finite but the Hilbert Schmidt norm of $V_f$ is not. 

\end{remark}

\begin{example} \label{e: not in Volterra}
If $f$ is measurable on $[0, 1],$ the condition $k_f \in L^1([0, 1]^2)$ does not imply $V_f \in \sA_V.$
Let  $ f(x) = \frac{1}{(1 - x)^{\frac{3}{2}}}.$  Then

\[||k_f||_1 = \int_0^1 (\int_0^x f(t) \, dt)\, dx < \infty \]

To show $V_f$ is unbounded, it is enough to find $\rho \in L^2[0, 1]$ such that $(V_f) \rho \notin L^2[0, 1].$
Take $\rho$ to be the constant $1.$ Then
\[ ||(V_f) \rho||_2^2 = \int_0^1 | \int_0^x f(t)\, dt|^2 \, dx \text{ diverges.} \]

This shows that that the conditions of Theorem~\ref{t: Vf in L2} part (5) on a measurable function $f$ to satisfy $V_f \in \sA_V$ are necessary but not sufficient.
\end{example}

%%%%%%%%%%%%%%%%%%%%%%%%%%%%%%%%%%%%%%%%%%%%%%%%%%%%%%%%%%%%%%%%%%%%%%%%%%%%%%%%%%
%%%%%%%%%%%%%%%%%%%%%%%%%%%%%%%%%%%%%%%%%%%%%%%%%%%%%%%%%%%%%%%%%%%%%%%%%%%%%%%%%%
%%%%%%%%%%%%%%%%%%%%%%%%%%%%%%%%%%%%%%%%%%%%%%%%%%%%%%%%%%%%%%%%%%%%%%%%%%%%%%%%%%%

\subsection{Operator Algebraic properties of $\sA_V$}

We turn now from a discussion of the norm of operators in $\sA_V$ to its properties as an algebra. Since the Volterra operator is quasinilpotent, the algebra $\sA_V$ 
is a commutative radical operator algebra. From the example of the weighted shift, we know that radical operator algebras $\sA_T$ can admit a gauge action. 
Does the same hold for $\sA_V ?$

\begin{proposition} \label{p: Volterra.gauge}
$\sA_V$ does not admit a gauge action. 
\end{proposition}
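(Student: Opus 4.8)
The plan is to derive a contradiction by assuming a gauge action $\ga$ exists and comparing norms of elements of the form $\ga_{-1}(S)$ with $S$, exactly as in Example~\ref{e: nonisometric}, but now one has to work harder because the norms in $\sA_V$ are not computed by a single eigenvalue. The natural candidate is $S = V - cV^2$ for a suitable real scalar $c > 0$: then $\ga_{-1}(S) = -V - cV^2$, and a gauge action would force $\|V - cV^2\| = \|V + cV^2\|$. So the whole proof reduces to exhibiting a single value of $c$ for which $\|V - cV^2\| \neq \|V + cV^2\|$.

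First I would set up the kernel picture from equation~\eqref{eq: Vn}: $V - cV^2$ is the operator $V_{f}$ with $f(x) = 1 - cx$, while $V + cV^2$ is $V_g$ with $g(x) = 1 + cx$. Both are Hilbert--Schmidt with the kernels $k_f(x,t) = f(x-t)$, $k_g(x,t) = g(x-t)$ supported on $t \le x$. The key quantitative idea is that $g$ is pointwise larger in modulus than $f$ on $[0,1]$ once $c \le 1$ (indeed $|1 - cx| \le 1 \le 1 + cx$ for $x \in [0,1]$, $0 < c \le 1$), and strictly so on a set of positive measure; this should force a strict inequality between the two operator norms in the ``right'' direction. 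To make this rigorous I would use the variational formula $\|V_h\| = \sup\{|\langle V_h \rho, \xi\rangle| : \|\rho\|_2, \|\xi\|_2 \le 1\}$ together with the observation, already used in the proof of Theorem~\ref{t: Vf in L2}(2), that one may take $\rho, \xi$ to have modulus at most $1$; for a nonnegative kernel like $k_g$ the supremum is attained (or approached) with nonnegative $\rho, \xi$, and then $\langle V_g \rho, \xi\rangle \ge \langle V_f \rho, \xi\rangle$ pointwise after replacing $\rho, \xi$ by $|\rho|, |\xi|$. Running the sup over the same test vectors then gives $\|V - cV^2\| \le \|V + cV^2\|$, and the strict inequality has to be extracted from the fact that $f$ and $g$ genuinely differ.

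The main obstacle I anticipate is precisely upgrading $\le$ to a strict $<$: equality of the two norms would require the optimizing vectors for $V_g$ to be supported where $f$ and $g$ agree in modulus, i.e. near $x = 0$, which is incompatible with $V_g$ being a nonzero compact operator whose top singular vector cannot concentrate at a single point. I would argue this by a compactness/continuity argument — the unit ball test vectors can be taken in a weakly compact set, an optimizer exists for the compact operator $V_g$, and if $\|V_f\| = \|V_g\|$ the same pair $(\rho,\xi)$ optimizes $V_f$; then $\langle (V_g - V_f)\rho, \xi\rangle = 0$ forces $|\rho|$ and $|\xi|$ to live on a null set once one unwinds that $k_g - k_f = -2cV^2$-type kernel is strictly positive on the open triangle — a contradiction with $\|\rho\|_2 = 1$. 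An alternative, possibly cleaner route, is to pick $c$ small and compute, or estimate with explicit test functions (e.g. $\rho = \xi \equiv 1$), the two quantities $\|V \pm cV^2\|^2$ via the Hilbert--Schmidt or numerical-range lower bounds and a matching upper bound, showing $\langle (V + cV^2)\mathbf 1, \mathbf 1\rangle > \|V - cV^2\|$ for a concrete $c$; this trades the soft argument for one explicit inequality. Either way, once a single $c$ with $\|V - cV^2\| \ne \|V + cV^2\|$ is in hand, the contradiction with the isometry of $\ga_{-1}$ is immediate and the proposition follows.
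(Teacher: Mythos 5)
Your proposal is correct, and it takes a genuinely different route from the paper. The paper does not test the isometry of $\ga_{-1}$ on a specific element; instead it invokes the M\"untz--Sz\'asz theorem to approximate $x$ uniformly on $[0,1]$ by polynomials in $x^2,\dots,x^n$, translates this into $\|V^2-\sum_{j\geq 2} j!\,a_jV^{j+1}\|\leq\ep$, and then notes that the Fourier-coefficient projection $S\mapsto\hat S(2)V^2$ attached to any gauge action is norm-decreasing, which would force $\|V^2\|\leq\ep$ for every $\ep>0$. That argument exposes a structural obstruction ($V^2$ lies in the closed span of the higher powers, so no continuous ``second coefficient'' functional can exist), and it rules out the gauge action globally; yours is more elementary and quantitative, reducing everything to a single inequality $\|V-cV^2\|\neq\|V+cV^2\|$. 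Your explicit route closes immediately with $c=1$ using the paper's own Theorem~\ref{t: Vf in L2}, part (1): $\|V-V^2\|=\|V_{1-x}\|\leq\|1-x\|_{L^1}=\tfrac12$, while $\|V+V^2\|\geq\langle (V+V^2)1,1\rangle=\tfrac12+\tfrac16=\tfrac23$. One detail in your soft route is stated backwards: equality of the two norms does not make the optimizer of $V_g$ optimize $V_f$; rather you should take an optimizing pair $(\rho,\xi)$ for the compact operator $V_f$, observe that $(|\rho|,|\xi|)$ then optimizes $V_g$ \emph{and} annihilates the difference kernel $2c(x-t)\geq 0$, whence $\rho(t)\xi(x)=0$ a.e.\ on $\{t<x\}$ and so $\langle V_g|\rho|,|\xi|\rangle=0\neq\|V_g\|$, a contradiction. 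With that step turned around, both of your routes are sound.
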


\begin{proof}

We will make use of the formula for $V^{n+1}$ (cf equation \ref{eq: Vn}).

Now by the Muntz-Szasz Theorem, the function $g(x) = x$ can be uniformly approximated in $[0, 1]$ by polynomials in $\{x^2, x^3, \dots\}.$ 
 Thus given $\ep > 0,$ we can find a polynomial $p(x) = a_2x^2 + a_3x^3 + \cdots + a_n x^n$ satisfying
$||g - p||_{\infty} =\sup_{0 \leq x \leq 1} |x - p(x)| < \ep.$

Let $\rho \in L^2[0, 1]$ with $||\rho||_2 = 1.$ Then
\begin{align*}
||V^2(\rho) - \sum_{j=2}^n j! a_j V^{j+1}(\rho)||_2 &=  ||\int_0^x [(x-t) - p(x-t)] \rho(t) \, dt ||_2  \\
	&\leq ||\int_0^x |(x-t) - p(x-t)| |\rho(t)| \, dt ||_2 \\
	&\leq || \int_0^1 \ep |\rho(t)|\, dt||_2 \\ 
	&\leq \ep
\end{align*}

Since this holds for all $\rho \in L^2[0, 1]$ of norm $1,$ it follows that
\[ ||V^2 - \sum_{j=2}^n  j! a_j V^{j+1}|| \leq \ep .\]

Now we assume that the operator algebra $\sA_V$ admits a gauge action, $\ga.$  Since by definition the Fourier coefficient $\hat{V^2}(2) = 1,$ and
$\widehat{(\sum_{j=2}^n j! a_j V^{j+1} )}(2) = 0,$ and since the operation $ S \in \sA_V  \mapsto \hat{S}(2) = \int_{\bbT} \ga_{\la}(S) \la^{-2}\, d|\la|$
is norm-decreasing, it follows that
\[ ||V^2|| = ||\int_{\bbT} (\ga_{\la}(V^2 - \sum_{j=2}^n j! a_j V^{j+1} ) \la^{-2} \, d|\la| \, ||\leq \ep \]
which is absurd, since $\ep > 0 $ was arbitrary.

Thus $\sA_V$ does not admit a gauge action.

\end{proof}

If $T$ is a quasinilpotent weighted shift, then the fact that the lattice $\Lat{T}$ is discrete and $\Lat{V}$ is continuous tells us that
the two operators are not unitarily equivalent. But to show that the operator algebras $\sA_T, \ \sA_V$ are not isomorphic requires another argument.

\begin{corollary} \label{c: not isomorphic}
Let $T$ be a quasinilpotent weighted shift. Then the radical operator algebras $\sA_T, \ \sA_V$ are not completely isometrically isomorphic.
\end{corollary}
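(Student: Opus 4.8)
The plan is to exploit the invariance of the gauge-action property under completely isometric isomorphism. Specifically, I would first observe that if $\Phi : \sA_T \to \sA_V$ were a completely isometric isomorphism, then any gauge action $\ga$ on $\sA_T$ (whose existence is guaranteed by Lemma~\ref{l:gauge action for shift}) would transport to a gauge action on $\sA_V$. The subtle point is that the transported family $\ga'_\la := \Phi \circ \ga_\la \circ \Phi^{-1}$ is certainly a continuous homomorphism $\bbT \to Aut(\sA_V)$ by isometry of $\Phi$ and $\Phi^{-1}$, but the defining condition of a gauge action requires $\ga'_\la(V) = \la V$, i.e.\ that $\Phi$ carries the distinguished generator $T$ to the distinguished generator $V$. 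This is not automatic, so the argument must be set up more carefully.

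The cleaner route is to avoid insisting that $\Phi(T) = V$ and instead argue at the level of Fourier analysis intrinsically. Here is the key step: a completely isometric isomorphism $\Phi : \sA_T \to \sA_V$ conjugates the gauge action $\ga$ on $\sA_T$ to a continuous one-parameter group $\ga'_\la = \Phi \ga_\la \Phi^{-1}$ of completely isometric automorphisms of $\sA_V$. Set $V' := \Phi(T) \in \sA_V$; then $\ga'_\la(V') = \la V'$, and since $\Phi$ is an algebra isomorphism, $V'$ generates $\sA_V$ as an operator algebra (its powers have dense span). So $\sA_V = \sA_{V'}$ and $\sA_{V'}$ admits a gauge action with respect to the generator $V'$. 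Now $V' = \Phi(T)$ is a nonzero quasinilpotent element of $\sA_V$, hence by Theorem~\ref{t: Vf in L2}(5) there is a measurable $f$, integrable on compact subsets of $[0,1)$, with $V' = V_f$. The goal is then to run the argument of Proposition~\ref{p: Volterra.gauge} with $V'$ in place of $V$: I would show that $(V')^2$ can be approximated in operator norm by polynomials in $V'$ with no degree-$2$ term, which forces $\widehat{(V')^2}(2) = 0$ with respect to the gauge action on $\sA_{V'}$ — contradicting $\widehat{(V')^2}(2) = 1$. The main obstacle is precisely this approximation step: the Muntz--Szasz argument in Proposition~\ref{p: Volterra.gauge} used the explicit kernel representation $V^{n+1}\xi(x) = \frac{1}{n!}\int_0^x (x-t)^n \xi(t)\,dt$ and the sup-norm approximation of $g(x)=x$ by polynomials in $\{x^2, x^3, \dots\}$, which does not obviously transfer to an abstract quasinilpotent generator $V'$.

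To overcome this, I would argue as follows. Because $\Phi$ is a completely isometric \emph{algebra} isomorphism, it commutes with the functional calculus for power series: for any polynomial $p$ with $p(0)=0$ we have $\Phi(p(T)) = p(V')$, and more generally $\Phi$ maps the closed subalgebra generated by $T$ isomorphically onto that generated by $V'$, respecting limits. Hence the Muntz--Szasz approximation $\|V^2 - \sum_{j=2}^n j!\,a_j V^{j+1}\| \le \ep$ established inside $\sA_V$ in the proof of Proposition~\ref{p: Volterra.gauge} — rephrased: $V^2$ lies in the norm-closure of $\operatorname{span}\{V^3, V^4, \dots\}$ — is an \emph{intrinsic} statement about $\sA_V$ as an operator algebra, and can be pulled back through $\Phi^{-1}$: $T^2$ lies in the norm-closure of $\operatorname{span}\{T^3, T^4, \dots\}$ inside $\sA_T$. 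But $T$ is a weighted shift, so by Lemma~\ref{l: partial sums} the Fourier coefficients of elements of $\sA_T$ are genuinely independent — indeed, evaluating at $e_0$ shows $(T^2 - \sum_{j\ge 3} c_j T^j)e_0 = a_0 a_1 e_2 - \sum_{j\ge 3} c_j (a_0\cdots a_{j-1}) e_j$ has norm at least $|a_0 a_1| > 0$, so $T^2$ is at distance $\ge |a_0 a_1| > 0$ from $\operatorname{span}\{T^3, T^4, \dots\}$. This is the contradiction. So the final proof is short: assume a completely isometric isomorphism $\Phi$ exists; transport the Muntz--Szasz approximation of $V^2$ by degree-$\ge 3$ monomials from $\sA_V$ to an approximation of $T^2$ by degree-$\ge 3$ monomials in $\sA_T$; and contradict it using the weighted-shift structure (Lemma~\ref{l: partial sums}) applied to the vector $e_0$. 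I expect the only point needing care is checking that $\Phi$, being an algebra homomorphism and completely isometric, does carry polynomials in $T$ to the same polynomials in $V'=\Phi(T)$ and respects the relevant norm-closures — which is immediate from multiplicativity and continuity — and the observation that in Proposition~\ref{p: Volterra.gauge} the approximating polynomials are literally polynomials in $V$ with vanishing constant and linear-term-and-degree-2 coefficients, hence their $\Phi^{-1}$-images are the corresponding polynomials in $T$.
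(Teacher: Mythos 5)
Your instinct that this corollary is not quite a one-liner is well taken: the paper's own proof is exactly the one-liner you were suspicious of ($\sA_T$ admits a gauge action by Lemma~\ref{l:gauge action for shift}, $\sA_V$ does not by Proposition~\ref{p: Volterra.gauge}, hence no completely isometric isomorphism). You correctly observe that, as defined, ``admits a gauge action'' is a property of the pair (algebra, distinguished generator): transporting $\ga$ through an isomorphism $\Phi$ produces a gauge action on $\sA_V$ relative to the generator $V'=\Phi(T)$, not relative to $V$, so an extra argument is needed to close the loop. Identifying that and proposing to close it by Fourier analysis is the right idea.

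However, your execution breaks down at the decisive step. The statement ``$V^2$ lies in the norm-closure of $\operatorname{span}\{V^3,V^4,\dots\}$'' is \emph{not} an intrinsic statement about the abstract operator algebra $\sA_V$; it is a statement about the particular element $V$. Applying $\Phi^{-1}$ yields only that $(T')^2$ lies in the closure of $\operatorname{span}\{(T')^3,(T')^4,\dots\}$, where $T':=\Phi^{-1}(V)$ --- and $T'$ need not equal $T$, for exactly the reason you yourself flagged in your opening paragraph. So the conclusion ``$T^2$ lies in the norm-closure of $\operatorname{span}\{T^3,\dots\}$'' does not follow, and the appeal to Lemma~\ref{l: partial sums} at $e_0$ collapses. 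The gap is fixable along the lines you set up: since $T'$ generates $\sA_T$ and $S\mapsto \hat{S}(1)$ is norm-continuous with $\hat{T}(1)=1\ne 0$, necessarily $\wh{T'}(1)\ne 0$ (otherwise every polynomial in $T'$, hence every element of $\sA_T$, would have vanishing first Fourier coefficient). Then $\wh{(T')^2}(2)=\wh{T'}(1)^2\ne 0$ while $\wh{(T')^j}(2)=0$ for $j\ge 3$, because the lowest nonvanishing Fourier coefficient of a product is the product of the lowest coefficients (as in the proof of Proposition~\ref{p: no integral domain}). Applying the norm-decreasing map $S\mapsto \int_{\bbT}\ga_{\la}(S)\,\la^{-2}\,d|\la| = \hat{S}(2)\,T^2$ to the pulled-back approximation gives $|\wh{T'}(1)|^2\,\|T^2\|\le \ep$ for every $\ep>0$, which is absurd since $T^2\ne 0$. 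With that insertion your argument becomes a complete, and more careful, version of the paper's proof.
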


\begin{proof} Since $\sA_T$ admits a gauge action, and $\sA_V$ does not, they cannot be completely isometrically isomorphic.
\end{proof}

Define a `convolution' on elements of $\sA_V$ as follows: if $f, \ g $ are measurable functions on $[0, 1]$ such that $V_f, \ V_g \in \sA_V,$ set
\begin{equation} \label{eq: convolution}
 f*g(x) = \int_0^x f(x-t) g(t) \, dt 
\end{equation}
First observe that since the restrictions of $f, g$ to the interval $[0, x]$ are integrable if $ x < 1,$ it follows that $f*g$ is well defined. Furthermore, 
a calculation shows that, for $\rho \in L^2[0, 1],$ 
\begin{equation} \label{eq: convolution2}
   V_{f*g}(\rho) = V_f(V_g(\rho))  \text{ and hence } V_{f*g} = V_f V_g .
\end{equation}

It is well known that the closed invariant subspaces of the Volterra operator $V$ have the form $\{ \xi \in L^2[0, 1]: \xi(t) = 0 \text{ for } 0 \leq t \leq x_0 \},$
for $ 0 < x_0 < 1.$ (\cite{KDav1988}, Theorem 5.5. Note that their notation  differs from our: their $V$ is $V^*$ here.) 
The same holds for the operator algebra, $\sA_V.$  This yields some information about the closed ideals of $\sA_V.$

\begin{proposition} \label{p: ideals of AV}
Let $ 0 < x_0 < 1.$ The subspace $ \sI_{x_0} = \{V_f: \  f(t) = 0, \text{ for } 0 \leq t \leq x_0\}$ is a closed ideal of $\sA_V.$
\end{proposition}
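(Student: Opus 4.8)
The plan is to verify directly that $\sI_{x_0}$ is a closed linear subspace that absorbs multiplication by elements of $\sA_V$. First I would observe that $\sI_{x_0}$ is the preimage under the Banach space isomorphism established in Theorem~\ref{t: Vf in L2}(5)--(6) — the correspondence $S \leftrightarrow f$ with $S = V_f$ — of the set of (locally integrable) functions vanishing on $[0, x_0]$; it is linear by Theorem~\ref{t: Vf in L2}(6). For closedness I would use the relationship between operator convergence and the representing functions. The cleanest way: if $V_{f_n} \to T$ in $\sA_V$ with each $f_n$ vanishing on $[0, x_0]$, then by part (5) we get $T = V_f$ for some locally integrable $f$, and the local-norm argument in the proof of part (5) (restricting via the projections $P_y$ for $x_0 < y < 1$ and using that the $f_{n,y}$ vanish on $[0,x_0]$, hence so does their limit) shows $f$ vanishes a.e.\ on $[0, x_0]$; thus $T \in \sI_{x_0}$.

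Next I would handle the ideal property. Since $\sA_V$ is commutative, it suffices to show $V_g V_f \in \sI_{x_0}$ whenever $V_f \in \sI_{x_0}$ and $V_g \in \sA_V$. By equation~\ref{eq: convolution2}, $V_g V_f = V_{g*f}$, where $(g*f)(x) = \int_0^x g(x-t) f(t)\, dt$. For $x \le x_0$, the integrand involves $f(t)$ only for $t \le x \le x_0$, where $f = 0$ a.e., so $(g*f)(x) = 0$ for a.e.\ $x \in [0, x_0]$. Hence $V_{g*f} \in \sI_{x_0}$. One should note that $g*f$ is well-defined as a locally integrable function on $[0,1)$: this is exactly the remark preceding equation~\ref{eq: convolution} together with the observation that the convolution of two $L^1[0,x]$ functions lies in $L^1[0,x]$ for each $x < 1$, so local integrability is preserved; and $V_{g*f} \in \sA_V$ since it equals the product $V_g V_f$ of two elements of the algebra $\sA_V$.

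The main obstacle, such as it is, is bookkeeping about which representing function corresponds to a given operator: a priori the map $f \mapsto V_f$ on locally integrable functions need not be injective, but Theorem~\ref{t: Vf in L2}(6) (additivity) reduces injectivity to the statement that $V_f = 0$ forces $f = 0$ a.e.\ on every $[0,x_0]$, which follows from the open-mapping equivalence of norms in part (4) on each $S_{x_0}$. So I would either cite that the correspondence $S \leftrightarrow f$ in parts (5)--(6) is a bijection onto the appropriate function class, or simply argue at the level of operators using equation~\ref{eq: convolution2} and the local-vanishing computation above, which sidesteps injectivity entirely. I expect the operator-level argument to be the shorter and cleaner route, so that is the one I would write up: establish linearity and closedness via Theorem~\ref{t: Vf in L2}, then the absorption property via the convolution formula and the pointwise vanishing of $g*f$ on $[0,x_0]$.
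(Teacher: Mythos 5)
Your proof of the ideal property coincides with the paper's: use $V_gV_f = V_{g*f}$ and observe that $(g*f)(x)=\int_0^x g(x-t)f(t)\,dt$ vanishes for $x\le x_0$ because $f$ does. For closedness, however, you take a genuinely different route. The paper argues directly that every $V_f\notin \sI_{x_0}$ lies at positive distance from $\sI_{x_0}$: it builds test vectors $\rho$ (a signum of $f(x_0-\cdot)$ supported in $[0,x_0]$) and $\xi$ (a signum of $F=V_f\rho$, supported in $[0,x_0]$) for which $\langle (V_f-V_g)\rho,\xi\rangle=\int_0^{x_0}|F|$ is a positive constant independent of $V_g\in\sI_{x_0}$, since $V_g\rho$ vanishes on $[0,x_0]$; hence no sequence from $\sI_{x_0}$ can converge to $V_f$. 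You instead pass through Theorem~\ref{t: Vf in L2}(5) to write the limit $T$ as $V_f$ and then argue via the compressions $P_y$ that $f$ vanishes on $[0,x_0]$. That route is sound, but the step ``the $f_{n,y}$ vanish on $[0,x_0]$, hence so does their limit'' deserves one more line: you need the functions in $S_y$ vanishing on $[0,x_0]$ to form a closed set in the norm $q\mapsto \|P_yV_qP_y\|$, which you get by compressing further to $P_{x_0}$ (so $P_{x_0}V_{f_n}P_{x_0}=0$ forces $P_{x_0}V_fP_{x_0}=0$) and invoking that $q\mapsto\|P_{x_0}V_qP_{x_0}\|$ is a genuine norm on $S_{x_0}$, i.e.\ part (2) localized to $[0,x_0]$. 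Note that the proof of that injectivity statement is precisely the $\rho,\xi,F$ construction the paper deploys on the spot, so the two arguments ultimately rest on the same computation; yours reaches it through the representation theorem and must also handle the (mild) non-uniqueness of the representing function, which you rightly flag, while the paper's is shorter and works entirely at the level of operators and test vectors.
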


\begin{proof}
Let $V_g \in \sI_{x_0}$ and $V_f \in \sA_V.$  Since $V_f V_g = V_{f*g},$ and since we have $f*g(x) = \int_0^x f(x-t) g(t),$ it is clear that if $x \leq x_0$
then $f*g(x) = 0.$ Thus, $\sI_{x_0}$ is an ideal.

The proof that $\sI_{x_0}$ is a closed ideal is similar to the proof of statement (2) of Theorem~\ref{t: Vf in L2}.
Suppose $V_f \in \sA_V,\ V_f \notin \sI_{x_0}.$ This implies that the restriction of $f$ to the interval $[0, x_0]$ is not zero.

Let $F(x) = \int_0^x f(x-t)\rho(t) \, dt,$ where
\[ \rho(t) =
\begin{cases}
sgn(f(x_0 -t)), \text{ if } t \leq x_0 \\
0, \text{ if } t > x_0
\end{cases} \]

Then $F$ is continuous, and $F(x_0) = \int_0^{x_0} |f(x_0 - t)| \, dt > 0.$
Let
\[ \xi(t) =
\begin{cases}
sgn(F(x)), \text{ if } x \leq x_0 \\
0, \text{ if } x > x_0
\end{cases} \]

Now let $V_g \in \sI_{x_0}$ be arbitrary. Then
\[ <(V_f - V_g)(\rho), \xi> = \int_0^{x_0} |F(x)|\, dx \text{ is a positive constant.} \]
It follows that no sequence $\{ V_{g_n} \} \subset \sI_{x_0}$ converges to $V_f.$ Hence $\sI_{x_0}$ is a closed ideal.

\end{proof}

\begin{remark} \label{r: ideals and subspaces}
In Proposition~\ref{p: ideals and subspaces} we established a one-to-one correspondence between closed ideals of the operator algebra $\sA_T$
of the weighted shift $T,$ and invariant subspaces. If such a relationship were to exist  for the Volterra operator algebra, then we would
have a complete description of the closed ideals of $\sA_V.$

Note that if an operator algebra $\sA$ is completely isometrically represented on a Hilbert space $H$ with cyclic vector $\xi_0,$ then given an invariant subpace $H_1$
there is a closed ideal $\sI$ by $ \sI = \{ a \in \sA: a\xi_0 \in H_1\}.$ In the other direction, there is no assurance that if $\sI $ is a closed ideal of $\sA,$
that the subspace $\sI \cdot \xi_0$ is closed in $H.$

\end{remark}

\begin{lemma} \label{l: support}
Let $ \al, \be \in (0, 1)$ and suppose $f, g$ are measurable functions on $[0, 1]$ such that $V_f, V_g \in \sA_V,$ and
\begin{enumerate}
\item If $ f$  is supported on $ [\al, 1]$  and $ g $  is supported on  $ [\be, 1],$
then $f*g$ is supported on $[(\al + \be), 1]$ if $\al + \be < 1,$ and $f*g = 0$ if $\al + \be \geq 1.$

\item In particular, if $\be = 1 - \al,$ then $f*g = 0.$
\end{enumerate}
\end{lemma}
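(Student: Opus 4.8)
The plan is to work directly from the integral formula~\ref{eq: convolution} for $f*g$ and reduce everything to a support bookkeeping. Recall that $f*g$ is a well-defined measurable function on $[0,1)$ (the observation recorded immediately after~\ref{eq: convolution}), so it suffices to prove that $f*g(x)=0$ for a.e.\ $x$ with $x<\al+\be$. Granting this, both assertions of the lemma follow at once: when $\al+\be<1$, it says precisely that $f*g$ is supported on $[\al+\be,1]$; when $\al+\be\ge1$ — in particular when $\be=1-\al$ — every $x\in[0,1)$ satisfies $x<\al+\be$, and hence $f*g=0$.

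For the pointwise vanishing I would first pass to the representatives of the classes $f$ and $g$ that vanish \emph{identically} on $[0,\al)$ and on $[0,\be)$ respectively; this changes neither the operators $V_f,V_g$ nor the function $f*g$, because modifying $f$ or $g$ on a null set modifies $t\mapsto f(x-t)g(t)$ on a null set for each fixed $x$. Now fix $x$ with $x<\al+\be$ and let $t\in[0,x]$ be arbitrary. If $t<\be$ then $g(t)=0$. If $t\ge\be$ then $x-t\le x-\be<\al$, so $f(x-t)=0$. In either case $f(x-t)g(t)=0$, and therefore $f*g(x)=\int_0^x f(x-t)g(t)\,dt=0$.

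I do not anticipate a real obstacle here: the lemma is at bottom an elementary support estimate, and the only step that needs a moment's care — converting the ``a.e.'' vanishing of $f$ and $g$ into honest pointwise vanishing, so that the inequality $x-\be<\al$ may be applied to a genuine value of $f$ — is dispatched by the choice of representatives above. An alternative, marginally longer route would argue operator-theoretically: $V_g$ carries $L^2[0,1]$ into the invariant subspace $\{\xi:\xi|_{[0,\be]}=0\}$, $V_f$ carries that subspace into $\{\xi:\xi|_{[0,\al+\be]}=0\}$, so by~\ref{eq: convolution2} the range of $V_{f*g}=V_fV_g$ lies in $\{\xi:\xi|_{[0,\al+\be]}=0\}$, and testing this against the functions $\chi_{[0,s]}$ recovers that $f*g$ vanishes a.e.\ on $[0,\al+\be]$; the computational content is the same.
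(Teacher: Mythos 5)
Your proof is correct and is exactly the ``straightforward computation'' that the paper invokes without writing out: for $x<\al+\be$ the integrand $f(x-t)g(t)$ vanishes for every $t\in[0,x]$ because either $t<\be$ or $x-t<\al$, and your preliminary choice of representatives vanishing identically off their supports makes this pointwise argument legitimate. No gaps; the operator-theoretic alternative you sketch is a valid but unnecessary detour.
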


\begin{proof}
The proof is a straightforward computation, making use of the convolution formula, equation~\ref{eq: convolution}.

\end{proof}

\begin{corollary} \label{c: nilpotent}
If $f$ is a measurable function on $[0, 1]$ such that $V_f \in \sA_V$ and $f$ is supported on $[\al, 1],$
then $(V_f)^n = 0$ if $n\al \geq 1.$
\end{corollary}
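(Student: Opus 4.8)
The plan is to apply Lemma~\ref{l: support} iteratively. The key observation is that the convolution product governs the operator product via equation~\ref{eq: convolution2}, so powers of $V_f$ correspond to iterated convolutions $f * f * \cdots * f$, and Lemma~\ref{l: support} tells us precisely how the support of a convolution grows.

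First I would set up the induction: given that $f$ is supported on $[\al, 1]$, I claim that the $n$-fold convolution $f^{*n} := f * f * \cdots * f$ ($n$ factors) is supported on $[n\al, 1]$ whenever $n\al < 1$, and equals $0$ once $n\al \geq 1$. The base case $n = 1$ is trivial. For the inductive step, suppose $f^{*(n-1)}$ is supported on $[(n-1)\al, 1]$ with $(n-1)\al < 1$. Then $f^{*n} = f^{*(n-1)} * f$, and by Lemma~\ref{l: support}(1) applied with the first function supported on $[(n-1)\al, 1]$ and the second on $[\al, 1]$: if $(n-1)\al + \al = n\al < 1$, then $f^{*n}$ is supported on $[n\al, 1]$; if $n\al \geq 1$, then $f^{*n} = 0$. (I should note that Lemma~\ref{l: support} is stated for $\al, \be \in (0,1)$, which covers the case at hand since $(n-1)\al \in (0,1)$ in the inductive hypothesis; the degenerate situation where some intermediate exponent hits exactly $1$ is absorbed into the ``$f*g = 0$'' clause.)

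Then I would translate back to operators: by equation~\ref{eq: convolution2}, $(V_f)^n = V_{f^{*n}}$, and since $f^{*n} = 0$ whenever $n\al \geq 1$, we conclude $(V_f)^n = 0$ in that range. This is exactly the assertion of the corollary. One small point worth checking is that $f^{*n}$ is a well-defined measurable function on $[0,1]$ at each stage, so that $V_{f^{*n}} \in \sA_V$ and equation~\ref{eq: convolution2} applies; but this is handled by the remark following equation~\ref{eq: convolution} that convolutions of the relevant functions are well defined, together with $V_f \in \sA_V$ being closed under products.

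I do not expect a genuine obstacle here — the corollary is essentially a bookkeeping consequence of Lemma~\ref{l: support}. The only mild subtlety is making sure the induction hypothesis keeps the intermediate support parameter strictly inside $(0,1)$ so that Lemma~\ref{l: support} is literally applicable; if $(n-1)\al \geq 1$ already, then $f^{*(n-1)}$ (hence $f^{*n}$) vanished at an earlier stage and there is nothing more to prove, so one can simply take the smallest $n$ with $n\al \geq 1$ and run the argument up to that point.
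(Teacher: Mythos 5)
Your proposal is correct and is exactly the argument the paper intends: the paper's proof consists of the single line ``this follows from repeated application of Lemma~\ref{l: support},'' and your induction on the support of the $n$-fold convolution, combined with $(V_f)^n = V_{f^{*n}}$ from equation~\ref{eq: convolution2}, is precisely that repeated application spelled out. The care you take with the endpoint cases and with keeping the intermediate support parameter in $(0,1)$ is a reasonable filling-in of details the paper leaves implicit.
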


\begin{proof}
This follows from repeated application of Lemma~\ref{l: support}
\end{proof}

The following result appears as Proposition 2.13 of \cite{PeWo99}. It is proved there using methods entirely different from those in this paper.
\begin{corollary}
The nilpotent elements in the Volterra operator algebra $\sA_V$ are dense.
\end{corollary}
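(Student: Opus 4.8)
The plan is to show that an arbitrary element $T\in\sA_V$ can be approximated in operator norm by elements of the form $V_g$ with $g$ supported away from $0$, since each such $V_g$ is nilpotent by Corollary~\ref{c: nilpotent}. By Theorem~\ref{t: Vf in L2}(5), write $T=V_f$ for a measurable $f$ on $[0,1]$ that is integrable over every compact subset of $[0,1)$. For $0<\alpha<1$ set $f^{(\alpha)}=f\cdot\chi_{[\alpha,1]}$, so that $V_{f^{(\alpha)}}$ is nilpotent by Corollary~\ref{c: nilpotent} (with $n\alpha\ge 1$). The claim reduces to: $\|V_f - V_{f^{(\alpha)}}\|=\|V_{f\chi_{[0,\alpha)}}\|\to 0$ as $\alpha\to 0^+$.

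First I would handle the easier case where $f\in L^1[0,1]$: then Theorem~\ref{t: Vf in L2}(1) gives $\|V_{f\chi_{[0,\alpha)}}\|\le \|f\chi_{[0,\alpha)}\|_1=\int_0^\alpha|f|\to 0$ by dominated convergence (or absolute continuity of the integral), and we are done. The general case, where $f$ need only be locally integrable on $[0,1)$, is the main obstacle, because near $x=1$ the function $f$ may blow up badly enough that $f\notin L^1$, as in Example~\ref{e:notell1}. Here I would instead approximate $T$ directly: by definition of $\sA_V$, there is a sequence of polynomials $p_n$ (vanishing at $0$) with $\|V_{p_n}-T\|\to 0$; each $p_n$ is in $L^1[0,1]$, so by the $L^1$ case just handled, each $V_{p_n}$ is itself an operator-norm limit of nilpotents $V_{p_n\chi_{[\alpha,1]}}$. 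A diagonal argument then exhibits $T$ as an operator-norm limit of nilpotents.

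More carefully, the argument runs: given $\varepsilon>0$, pick $n$ with $\|V_{p_n}-T\|<\varepsilon/2$; since $p_n\in L^1[0,1]$, pick $\alpha>0$ with $\|V_{p_n}-V_{p_n\chi_{[\alpha,1]}}\|\le \int_0^\alpha|p_n|<\varepsilon/2$; then $V_{p_n\chi_{[\alpha,1]}}$ is nilpotent by Corollary~\ref{c: nilpotent} and lies within $\varepsilon$ of $T$. Hence the nilpotents are dense in $\sA_V$. The only subtlety to check is that $p_n\chi_{[\alpha,1]}$ indeed gives an element of $\sA_V$ and that Corollary~\ref{c: nilpotent} applies --- both are immediate since $p_n\chi_{[\alpha,1]}\in L^1[0,1]$ (so $V_{p_n\chi_{[\alpha,1]}}\in\sA_V$ by the remark following \eqref{e: ell2}, i.e.\ $V_f\in\sA_V$ whenever $f\in L^2[0,1]$) and it is supported on $[\alpha,1]$.

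\begin{proof}
Let $T\in\sA_V$ and $\varepsilon>0$. By definition of $\sA_V$ there is a polynomial $p$ with $p(0)=0$ such that $\|V_p-T\|<\varepsilon/2$. Since $p\in L^1[0,1]$, by absolute continuity of the integral there is $\alpha\in(0,1)$ with $\int_0^\alpha|p|<\varepsilon/2$. Put $g=p\cdot\chi_{[\alpha,1]}$. Then $g\in L^2[0,1]$, so $V_g\in\sA_V$ by \eqref{e: ell2}, and
\[
\|V_p-V_g\| = \|V_{p\chi_{[0,\alpha)}}\| \le \|p\chi_{[0,\alpha)}\|_1 = \int_0^\alpha|p| < \varepsilon/2,
\]
using Theorem~\ref{t: Vf in L2}(1). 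Hence $\|V_g-T\|<\varepsilon$. Since $g$ is supported on $[\alpha,1]$, Corollary~\ref{c: nilpotent} gives $(V_g)^n=0$ for $n\alpha\ge 1$, so $V_g$ is nilpotent. As $\varepsilon>0$ and $T\in\sA_V$ were arbitrary, the nilpotent elements are dense in $\sA_V$.
\end{proof}
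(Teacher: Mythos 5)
Your proof is correct and follows essentially the same route as the paper: approximate $T$ by an operator $V_g$ with $g\in L^1[0,1]$ (the paper uses a general $L^1$ kernel, you use a polynomial kernel, which amounts to the same thing since polynomials in $V$ are exactly the $V_q$ with $q$ a polynomial), truncate the kernel to vanish on $[0,\alpha)$ using absolute continuity of the integral and the bound $\|V_h\|\le\|h\|_1$ from Theorem~\ref{t: Vf in L2}(1), and invoke Corollary~\ref{c: nilpotent}. One trivial remark: the condition $p(0)=0$ in your proof attaches to the polynomial in $V$, not to the kernel function (e.g.\ $a_1V+a_2V^2$ corresponds to the kernel $a_1+a_2x$), but this plays no role in the argument.
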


\begin{proof}
Let  $ V_f \in \sA_V.$ Since the elements of the form $V_g, \ g \in L^1[0, 1]$ are dense in $\sA_V,$ given $\ep > 0,$ let $g \in L^1[0, 1]$  satisfy $||V_f - V_g|| < \ep.$
There is $\de > 0$ such that if $ E \subset [0, 1]$ is measurable with $\mu(E) \leq \de,$ then $\int_E |g| \leq \ep.$

Define $h \in L^1[0, 1]$ by
\[ h(x) =
\begin{cases}
0, \text{ if } 0 \leq x < \de \\
g(x), \text{ if } \de  \leq x \leq 1
\end{cases}\]
Then
\begin{align*}
||V_f - V_h || &\leq ||V_f - V_g|| + ||V_g - V_h|| \\
	&< \ep + ||g - h||_1 \\
	&< 2\ep
\end{align*}

Observe that the second inequality above makes use of part (1) of Theorem~\ref{t: Vf in L2}, since
\[ ||V_g - V_h|| = ||V_{g-h}|| \leq ||g - h||_1 \]

Let $n \in \bbN$ satisfy $n \de \geq 1.$ It follows from Corollary\ref{c: nilpotent} that $V_h$ is nilpotent in $\sA_V.$
\end{proof}

\begin{remark} \label{r: Titschmarsh}
$L^1[0, 1]$ is a Banach algebra under the convolution $ f*g(x) = \int_0^x  f(x-t) g(t) \, dt.$   Hence this Banach algebra is dense in $\sA_V,$ and the convolution
is the restriction of that in $\sA_V$ to $L^1.$ 

One version of the Titschmarsh convolution is: if $f, g \in L^1[0, 1]$ and
$ f*g = 0 $ then there is an $\al  \in [0, 1]$ such that $\text{supp} f \subset [\al, 1]$ and $\text{supp} g \subset [1-\al, 1].$ (\cite{KDav1988} Problem 5.4 and
\cite{Lax} Theorem 10, Sec. 38.3)

The next result shows that the Titschmarsh convolution theorem also holds in the larger algebra $\sA_V.$
In particular, the converse of statement (2) of Lemma~\ref{l: support} holds.

\end{remark}

\begin{corollary} \label{c: Titschmarsh}
Let $f, g$ be real-valued measurable functions on $[0, 1]$ such that $V_f, \ V_g \in \sA_V.$ Then
 $V_{f*g} = 0$ if and only if there exists $\al \in [0, 1]$ such that $\text{supp} f \subset [\al, 1]$ and $\text{supp} g \subset [1 - \al, 1].$
\end{corollary}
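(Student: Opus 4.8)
The plan is to reduce the statement to the classical Titchmarsh convolution theorem on $L^1[0,1]$ by a cutoff-and-pass-to-the-limit argument, exploiting the truncation projections $P_{x_0}$ from the proof of Theorem~\ref{t: Vf in L2}(5) and the equivalence of norms on $S_{x_0}$ from Theorem~\ref{t: Vf in L2}(4). The ``if'' direction is exactly Lemma~\ref{l: support}(2), so all the work is in the ``only if'' direction: assuming $V_{f*g}=0$, produce the cutoff point $\al$.

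First I would fix $0<x_0<1$ and look at the restricted functions $f_{x_0},g_{x_0}\in S_{x_0}$ obtained by setting the functions equal to $0$ on $(x_0,1]$. These lie in $L^1[0,x_0]$ by Theorem~\ref{t: Vf in L2}(5). The key computation is that for $x\le x_0$ we have $(f*g)(x)=(f_{x_0}*g_{x_0})(x)$, since only values of $f$ and $g$ on $[0,x]\subset[0,x_0]$ enter the integral defining $(f*g)(x)$. Hence $V_{f*g}=0$ forces $f_{x_0}*g_{x_0}=0$ as an $L^1[0,x_0]$ function. Now the classical Titchmarsh theorem on the interval $[0,x_0]$ applies: there is $\al(x_0)\in[0,x_0]$ with $\operatorname{supp}f_{x_0}\subset[\al(x_0),x_0]$ and $\operatorname{supp}g_{x_0}\subset[x_0-\al(x_0),x_0]$. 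The second step is to show $\al(x_0)$ is essentially independent of $x_0$: if $x_0<x_1$, then $f_{x_1}$ restricted to $[0,x_0]$ equals $f_{x_0}$, so $\operatorname{supp}f_{x_0}=\operatorname{supp}f_{x_1}\cap[0,x_0]$, which pins down the ``lower edge'' $\al$ of the support of $f$ as a single number $\al=\operatorname{ess\,inf}\operatorname{supp}f\in[0,1]$, provided $f$ is not a.e.\ zero on every $[0,x_0]$; and likewise the lower edge of $\operatorname{supp}g$ is some $\be$. From the relation $x_0-\al(x_0)\le \be(x_0):=$ lower edge of $\operatorname{supp}g_{x_0}$, one deduces $\al+\be\ge x_0$ for every $x_0<1$, hence $\al+\be\ge 1$, i.e.\ $\be\ge 1-\al$, giving $\operatorname{supp}f\subset[\al,1]$ and $\operatorname{supp}g\subset[1-\al,1]$ as required.

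Third, I would handle the degenerate cases separately: if $f=0$ a.e.\ on $[0,x_0]$ for all $x_0<1$, then $f=0$ a.e.\ on $[0,1)$, hence $V_f=0$ and we may take $\al=1$ (then $\operatorname{supp}f\subset[1,1]$ trivially and $\operatorname{supp}g\subset[0,1]$ is automatic); symmetrically if $g$ vanishes. Similarly one must check that in the nondegenerate case $\al$ and $\be$ are genuinely achieved as essential infima and that the Titchmarsh supports on the subintervals glue consistently; the consistency of the gluing is exactly the compatibility $f_{x_{n+1}}|_{[0,x_n]}=f_{x_n}$ already used in the proof of Theorem~\ref{t: Vf in L2}(5), so this is routine.

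The main obstacle I anticipate is not any single hard estimate but rather the bookkeeping needed to convert the family of interval-dependent Titchmarsh cutoffs $\al(x_0)$ into a single global $\al$ and to verify the inequality $\al+\be\ge 1$ cleanly; in particular one must be careful that ``$\operatorname{supp}f_{x_0}\subset[\al(x_0),x_0]$'' is compatible across nested intervals and that the supremum $\al=\sup_{x_0<1}\al(x_0)$ (equivalently the essential infimum of $\operatorname{supp}f$) still satisfies the support condition on all of $[0,1]$ after taking the limit. Once that monotone-limit argument is set up, the statement follows from the classical $L^1[0,x_0]$ Titchmarsh theorem with no further analysis.
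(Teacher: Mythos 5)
Your argument is correct, but it follows a genuinely different route from the paper's. You localize to subintervals $[0,x_0]$, $x_0<1$, where $f$ and $g$ are honestly in $L^1$, apply the classical Titchmarsh theorem on each $[0,x_0]$, and let $x_0\to 1$; the only points needing care are exactly the ones you flag, namely that $V_{f*g}=0$ forces $(f*g)|_{[0,x_0]}=0$ a.e.\ (which follows from the injectivity of $h\mapsto P_{x_0}V_hP_{x_0}$ on $S_{x_0}$, i.e.\ part (2) of Theorem~\ref{t: Vf in L2} transported to $[0,x_0]$ as in the proof of part (5)), and that the interval-dependent cutoffs $\alpha(x_0)$ stabilize to the essential lower edges of the supports, giving $\alpha+\beta\ge x_0$ for all $x_0<1$ and hence $\alpha+\beta\ge 1$. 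The paper instead mollifies: from $V_fV_g=0$ it passes to $VV_fV_g=V_FV_g=0$ with $F=V_f(1)\in L^2[0,1]\subset L^1[0,1]$, applies the classical theorem once, on the whole interval, to $F*(V_g\xi)$ for $\xi\in L^2[0,1]$, reads off the support of $f$ from that of $F$, and recovers the support of $g$ by a continuity/sign argument with a well-chosen $\xi$. Your version avoids the auxiliary test vectors and the final sign argument and works directly with $f$ and $g$, at the cost of the truncation-and-gluing bookkeeping; the paper's version is shorter but must (implicitly) note that the Titchmarsh cutoff for $F*(V_g\xi)$ can be taken uniform in $\xi$, since it is determined by $F$ alone. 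Both proofs use Lemma~\ref{l: support} for the easy direction and ultimately rest on the classical $L^1$ Titchmarsh theorem.
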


\begin{proof}
One direction follows from Lemma~\ref{l: support}

If $V_{f*g} = V_f V_g = 0, $ then $V(V_f V_g) = V_F V_g = 0$ where $F(x) = \int_0^x f(t) \, dt.$ 
 It follows that $V_F V_g \xi = 0$ for all $\xi \in L^2[0, 1].$ Thus $ \int_0^x F(x-t) (V_g\xi)(t) \, dt = 0$ for a.a. $ x \in [0, 1].$

As $F = V_f(1)$ and $ V_g\xi $ are both in $ L^2[0, 1] \subset L^1[0, 1],$ it follows from the classical Titschmarch theorem that there exists $\al \in [0, 1]$ such that
$F = 0 $ in $[0, \al]$ and $ \int_0^x g(x-t) \xi(t) \, dt = 0$ for  $x \in [0, 1- \al].$  Thus $f = 0 $ in $[0, \al].$

Suppose $ g \neq 0$ in $[0, 1 - \al].$ Thus for some $x_0 \in [0, 1-\al], \ \int_0^{x_0} |g|\, dt \neq 0.$ Define
$\xi(t) = sgn(g(x_0-t)).$ Then $\int_0^x g(x-t) \xi(t) \, dt $ is a nonzero continuous function in the interval $[0, 1 - \al],$ a contradiction.

\end{proof}

The author would like to thank Chris Phillips for his comments on an earlier version of this paper.

%%%%%%%%%%%%%%%%%%%%%%%%%%%%%%%%%%%%%%%%%%%%%%%%%%%%%%%%%%%%%%%%%%%%%%%%%%%%%%%%%%%%
%%%%%%%%%%%%%%%%%%%%%%%%%%%%%%%%%%%%%%%%%%%%%%%%%%%%%%%%%%%%%%%%%%%%%%%%%%%%%%%%%%%%%
%%%%%%%%%%%%%%%%%%%%%%%%%%%%%%%%%%%%%%%%%%%%%%%%%%%%%%%%%%%%%%%%%%%%%%%%%%%%%%%%%%%%%%%%
%%%%%%%%%%%%%%%%%%%%%%%%%%%%%%%%%%%%%%%%%%%%%%%%%%%%%%%%%%%%%%%%%%%%%%%%%%%%%%%%%%%%%%%%

\end{document}